\documentclass[11pt]{amsart}
\usepackage{amsmath,amssymb,amsthm, a4wide,scalerel}
\usepackage{mathabx}
\usepackage{color}
\usepackage{xcolor,graphicx}
\usepackage{mathrsfs}
\usepackage{tikz,pgfplots}
\usepackage{subcaption}
\usepackage[normalem]{ulem}
\usepackage{dsfont}
\usepackage{comment}
\usepackage{enumitem}
\usepackage{ctable} % for \specialrule command
\usepackage{hyperref}
\usepackage{cleveref}

\newtheorem{theorem}{Theorem}
\newtheorem{lemma}[theorem]{Lemma}
\newtheorem{proposition}[theorem]{Proposition}
\newtheorem{remark}[theorem]{Remark}
\newtheorem{corollary}[theorem]{Corollary}

\newtheorem{assumption}{Assumption}

\crefname{theorem}{Theorem}{Theorems}
\crefname{lemma}{Lemma}{Lemmas}
\crefname{proposition}{Proposition}{Propositions}
\crefname{remark}{Remark}{Remarks}
\crefname{corollary}{Corollary}{Corollaries}
\crefname{defi}{Definition}{Definitions}
\crefname{hypo}{Hypothesis}{Hypotheses}
\crefname{assumption}{Assumption}{Assumptions}
\crefname{example}{Example}{Examples}

\newcommand{\dt}{\tau}

\newcommand{\R}{{\mathbb R}}
\newcommand{\N}{{\mathbb N}}
\newcommand{\E}{{\mathbb E}}

\providecommand{\norm}[1]{\left\lVert#1\right\rVert}
\newcommand*\dd{\mathop{}\!\mathrm{d}}

\newcommand{\sI}{{\upshape PP(0.5)}}
\newcommand{\hosI}{{\upshape PP(1.0)}}

\usepackage{color}

\author[C.-E. Br\'ehier]{Charles-Edouard Br\'ehier}
              \address{Universite de Pau et des Pays de l'Adour, E2S UPPA, CNRS, LMAP, Pau, France}
              \email{charles-edouard.brehier@univ-pau.fr}

\author[D. Cohen]{David Cohen}
              \address{Department of Mathematical Sciences,
              Chalmers University of Technology and University of Gothenburg, 41296~Gothenburg, Sweden}
              \email{\tt david.cohen@chalmers.se}

\begin{document}

\title[First-order positivity preserving scheme for SDE\MakeLowercase{s}]{Analysis of a first-order explicit positivity preserving scheme for a class of scalar SDE\MakeLowercase{s}}

\begin{abstract}
We propose and analyze a first-order numerical scheme 
for a class of scalar It\^o stochastic differential equations with almost surely positive solutions. We construct a new explicit numerical scheme, such that for any choice of the time-step size, the numerical solution is guaranteed to remain almost surely positive. The main result of this article is the first-order strong convergence of the proposed positivity preserving scheme, which is illustrated with numerical experiments and proved rigorously. It is also shown how to adapt the scheme and the results to Stratonovich stochastic differential equations with positive solutions.
\end{abstract}

\maketitle
{\small\noindent
{\bf AMS Classification.} 60H10, 60H35, 65C30.

\bigskip\noindent{\bf Keywords.} Stochastic differential equations with positive solutions. Structure preserving numerical integration. Positivity preserving numerical schemes.
First-order strong convergence. 
}

\section{Introduction}\label{secIntro}

In this article, we consider scalar It\^o stochastic differential equations (SDEs)
\begin{equation}\label{eq:SDEintro}
\left\lbrace
\begin{aligned}
\dd X(t)&=G(X(t))\dd t+g(X(t))\dd B(t),\quad t\ge 0,\\
X(0)&=x_0,
\end{aligned}
\right.
\end{equation}
where the drift and diffusion coefficients satisfy the condition $G(0)=g(0)=0$ (we refer to Section~\ref{secAss} for precise regularity and growth conditions). This ensures that the solution remains nonnegative at all times: if $x_0\ge 0$, then $X(t)\ge 0$ almost surely for all $t\ge 0$. Standard numerical methods such as the Euler--Maruyama and the Milstein schemes do not preserve this property. Positivity preserving schemes have been proposed and studied in previous works, mostly for specific examples of SDEs, with main techniques being, Lamperti or logarithmic transformations, and modifications of classical numerical schemes \cite{MR2341800,MR2367990,MR3082312,MR3248050,MR3331648,MR4268206,MR4475995,MR5051815,MR4847179,MR4888024,MR5004937,MR4993436,MR5043963}. The two main models considered in the literature for construction of positive preserving schemes are the stochastic Susceptible-Infected-Susceptible (SIS) epidemic models, see for instance~\cite{MR4220738,MR4274899,MR4469088,MR4444727,MR4395894,MR4597411,MR4525901,MR4790879}, and the Cox--Ingersoll--Ross (CIR) process in finance, see for instance~\cite{MR2186814,MR2898556,MR3006996,MR3433041,MR3732573,MR4544037}. The two lists of references above are not exhaustive. We emphasize that the SIS and CIR models are not covered by the contribution of this article, due to the assumptions imposed on the coefficients $G$ and $g$. Instead we consider a general class of models and the proposed numerical method can be adapted to a wide range of cases. For simplicity of the exposition, we consider only scalar SDEs but the results can be adapted to some higher-dimensional situations. To conclude this review of the literature, we mention the recent works~\cite{MR4780408,MR4729657,Djurdjevac2026,ana26}, and references therein, for the design and analysis of positivity preserving numerical schemes for stochastic partial differential equations, where both temporal and spatial discretization are considered.

We briefly review the two main ingredients of our construction of positivity preserving schemes, before proposing the new first-order numerical scheme. First, noting that $G(0)=g(0)=0$, the drift and diffusion coefficients $G$ and $g$ can be expressed as $G(x)=xF(x)$ and $g(x)=xf(x)$ for auxiliary mappings $F$ and $f$. Second, for arbitrary parameters $a,\alpha\in\R$ and initial value $y_0\in\R$, the solution to the linear It\^o stochastic differential equation
\[
\left\lbrace
\begin{aligned}
\dd Y(t)&=aY(t)\dd t+\alpha Y(t)\dd B(t),\quad t\ge 0,\\
Y(0)&=y_0,
\end{aligned}
\right.
\]
is given at all times $t\ge 0$ by
\[
Y(t)=\exp\left(at+\alpha B(t)-\frac{\alpha^2 t}{2}\right)y_0.
\]
Combining the two ingredients above between two grid points yields the scheme
\begin{equation}\label{eq:schemedemiintro}
X_{n+1}=\exp\left(F(X_n)\dt+f(X_n)\delta B_n-\frac12 f(X_n)^2\dt\right)X_n,
\end{equation}
with initial value $X_0=x_0$, see Section~\ref{sec:constructionscheme} for a more detailed derivation. Note that similar ideas have been used in \cite{MR4177372,MR4287454,bossy2024,MR4780408,MR4729657} for instance. Observe that the scheme~\eqref{eq:schemedemiintro} is positivity preserving for any choice of the time-step size $\dt$: if $x_0\ge 0$, then $X_n\ge 0$ almost surely for all $n\ge 0$. Numerical experiments show that the scheme~\eqref{eq:schemedemiintro} converges in general with strong order $1/2$ to the solution of~\eqref{eq:SDEintro}.

To achieve higher order of convergence, we propose the following new explicit positivity preserving scheme for~\eqref{eq:SDEintro}: 
\begin{equation}\label{eq:schemeintro}
X_{n+1}=\exp\left(F(X_n)\dt+f(X_n)\delta B_n+\frac12\bigl(f' g\bigr)(X_n)\bigl[\delta B_n^2-\dt]-\frac12 f(X_n)^2\dt\right)X_n,
\end{equation}
with initial value $X_0=x_0$. Like in the standard Milstein scheme applied to~\eqref{eq:SDEintro} compared to the Euler--Maruyama scheme, the appropriate correction term $\frac12\bigl(f' g\bigr)(X_n)\bigl[\delta B_n^2-\dt]$ is introduced in~\eqref{eq:schemeintro} compared to the initial scheme~\eqref{eq:schemedemiintro}. The choice of the correction term to achieve strong order $1$ for the convergence is motivated in Section~\ref{sec:constructionscheme}.

Our main results on the scheme~\eqref{eq:schemeintro} can be summarized as follows. First, for any time-step size $\dt$, we show that the explicit scheme~\eqref{eq:schemeintro} is positivity preserving, see Proposition~\ref{prop:posi}: if $x_0\ge 0$, then $X_n\ge 0$ almost surely for all $n\ge 0$. Second, we show that the strong order of convergence of the scheme~\eqref{eq:schemeintro} to the solution of~\eqref{eq:SDEintro} is equal to $1$, see Theorem~\ref{theoMain}: for arbitrary $p\in[1,\infty)$ and $T\in(0,\infty)$, one has
\begin{equation}\label{eq:errorintro}
\underset{0\le n\le N}\sup~\left(\E\left[|X_n-X(t_n)|^{2p}\right]\right)^{\frac{1}{2p}}\le C_p(T,x_0)\dt.
\end{equation}
We also show that~\eqref{eq:errorintro} provides pathwise strong error estimates and pathwise almost sure convergence with rate of convergence $1-\varepsilon$ with arbitrary $\varepsilon\in(0,1)$, see \Cref{corCV}. The first-order positivity preserving scheme~\eqref{eq:schemeintro} can be adapted to approximate solutions to Stratonovich stochastic differential equations, see Corollary~\ref{corStrato}.

We also provide numerical experiments in Section~\ref{sec:num} to illustrate the main results and the superiority of the new first-order positivity preserving scheme~\eqref{eq:schemeintro} compared to the original positivity preserving scheme~\eqref{eq:schemedemiintro} with strong order $1/2$, and to the classical Euler--Maruyama and Milstein schemes which do not preserve positivity.

A large part of the article is devoted to the proof of the first-order strong error estimates~\eqref{eq:errorintro} above, with rigorous and detailed treatments of auxiliary error terms. The error is decomposed into relevant auxiliary error terms in Section~\ref{secDec}, and the main cancellation which allows to achieve the first-order convergence is highlighted there. Note that the proof of~\eqref{eq:errorintro} requires to preliminarily establish strong error estimates with order $1/2$ for all $p\in[1,\infty)$.

We also emphasize that the proposed first-order positivity preserving scheme~\eqref{eq:schemeintro} can be compared to the first-order domain preserving scheme constructed and analyzed in~\cite{DP}. In particular, from \cite{DP}, one can deduce how to generalize the definition of the first-order positivity preserving scheme for systems of It\^o and Stratonovich stochastic differential equations in arbitrary dimension $d$, with solutions taking values in $[0,\infty)^d$. 

This article is organized as follows. Section~\ref{secAss} presents the theoretical framework and the considered It\^o and Stratonovich stochastic differential equations. The proposed class of positivity preserving numerical schemes is presented in Section~\ref{secPosi}: its construction is provided in Section~\ref{sec:constructionscheme}, the main results are stated in Section~\ref{sec:main} and illustrated in Section~\ref{sec:num}. Section~\ref{secAux} presents auxiliary results, and finally Section~\ref{secProof} presents the detailed proof of Theorem~\ref{theoMain}.

\section{Setting}\label{secAss}
In this section, we present the setting of the paper and provide some useful results on the analytical solution of the problem.

\subsection{Notation}

For a continuous and bounded mapping $h\colon\R^+\to\R$, set $\|h\|_\infty=\underset{x\in\R^+}\sup~|h(x)|$.

Let $(\Omega,\mathcal{F},\mathbb{P})$ denote a probability space equipped with a filtration $\bigl(\mathcal{F}_t\bigr)_{t\ge 0}$, where we consider a standard real-valued Wiener process $\bigl(B(t)\bigr)_{t\ge 0}$. Let us recall the Burkholder--Davis--Gundy (BDG) inequality:
for all $T\in(0,\infty)$ and all $p\in[1,\infty)$,
there exists $C_p\in(0,\infty)$ and $C_p(T)=C_pT^{p-1}\in(0,\infty)$ such that for any predictable scalar stochastic process $\bigl(\phi(t)\bigr)_{t\in[0,T]}$ and for all $t\in[0,T]$, one has
\begin{equation}\label{eq:BDG}
\E\left[\left|\int_{0}^{t}\phi(s)\dd B(s)\right|^{2p}\right]\le C_p\E\left[\left(\int_{0}^{t}|\phi(s)|^2 \dd s\right)^{p}\right]\le
C_p(T)\int_{0}^{T}\E\left[|\phi(s)|^{2p}\right]\dd s.
\end{equation}
In addition, one has the following discrete-time version of the BDG inequality which is employed in the analysis of the proposed numerical scheme. For all $T\in(0,\infty)$ and all $p\in[1,\infty)$, there exists $C_p(T)\in(0,\infty)$ such that for any $N\in\N$, any family $\bigl(t_n\bigr)_{0\le n\le N}$ of points such that $0=t_0<t_1<\ldots<t_{N-1}<t_N=T$, and any family $\bigl(\phi_n\bigr)_{0\le n\le N}$ of predictable processes, for all $n\in\{1,\ldots,N\}$ one has
\begin{equation}\label{eq:BDGdiscrete}
\E\left[\left|\sum_{k=0}^{n-1}\int_{t_k}^{t_{k+1}}\phi_k(s)\dd B(s)\right|^{2p}\right]\le C_p(T)\sum_{k=0}^{n-1}\int_{t_k}^{t_{k+1}}\E\left[|\phi_k(s)|^{2p}\right]\dd s.
\end{equation}

\subsection{It\^o stochastic differential equation}

In this article, we consider the scalar It\^o stochastic differential equation
\begin{equation}\label{eq:sde}
\left\lbrace
\begin{aligned}
\dd X(t)&=G(X(t))\dd t+g(X(t))\dd B(t),\quad t\ge 0,\\
X(0)&=x_0,
\end{aligned}
\right.
\end{equation}
where the initial value $x_0$, the drift coefficient $G$ and the diffusion coefficient $g$ satisfy the assumptions below.

\begin{assumption}\label{assInit}
The initial value $x_0$ is deterministic and satisfies $x_0\ge 0$.
\end{assumption}

\begin{assumption}\label{assCoeff}
The mappings $G,g\colon\R^+\to\R$ are of class $\mathcal{C}^2$ with bounded first and second order derivatives.
Furthermore, one has
\begin{equation}\label{eq:Gg0}
G(0)=g(0)=0.
\end{equation}
\end{assumption}
Note that the mappings $G$ and $g$ then satisfy the following linear growth conditions:
\begin{equation}\label{eq:Gg-growth}
|G(x)|\le \|G'\|_\infty x,\quad |g(x)|\le \|g'\|_\infty x,\qquad \forall~x\in\R^+.
\end{equation}
Furthermore, one has $(g'g)'=g''g+(g')^2$. The first and second order derivatives $g'$ and $g''$ of the mapping $g$ are bounded on $\R^+$ owing to \Cref{assCoeff}, however the mapping $g$ is not assumed to be bounded but satisfies the inequality~\eqref{eq:Gg-growth}. Therefore there exists $C\in(0,\infty)$ such that
\begin{equation}\label{eq:boundg'g}
\big|(g'g)(x_2)-(g'g)(x_1)\big|\le C\bigl(1+x_1+x_2\bigr),\qquad \forall~x_1,x_2\in[0,\infty).
\end{equation}

The next result shows that the stochastic differential equation~\eqref{eq:sde} admits a unique solution, which takes valuesin $[0,\infty)$,
and also provides moment bounds for this solution.

\begin{proposition}\label[proposition]{propMomEx}
Consider the stochastic differential equation~\eqref{eq:sde} under Assumptions~\ref{assInit}~and~\ref{assCoeff}.

It admits a unique solution $\bigl(X(t)\bigr)_{t\ge 0}$ which takes values in $[0,\infty)$: almost surely, $X(t)\ge 0$ for all $t\ge 0$.

Furthermore, for all $T\in(0,\infty)$ and $p\in[1,\infty)$, there exists $C_p(T,x_0)\in(0,\infty)$ such that
\begin{equation}\label{momentExact}
\underset{t\in[0,T]}\sup~\E\left[|X(t)|^{2p}\right]\le C_p(T,x_0).
\end{equation}
\end{proposition}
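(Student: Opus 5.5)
To prove existence and uniqueness, I would first extend the coefficients to the whole real line. Define $\tilde G(x)=G(\max(x,0))$ and $\tilde g(x)=g(\max(x,0))$ for $x\in\R$. Since $G$ and $g$ have bounded first derivatives on $\R^+$ and vanish at $0$ by \eqref{eq:Gg0}, the extensions $\tilde G$ and $\tilde g$ are globally Lipschitz continuous on $\R$, with Lipschitz constants $\|G'\|_\infty$ and $\|g'\|_\infty$. Standard It\^o theory then gives a unique strong solution $\tilde X$ of the SDE on $\R$ with coefficients $\tilde G,\tilde g$ and initial value $x_0$. It remains to show that $\tilde X(t)\ge 0$ for all $t$. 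Once this holds, $\tilde X$ solves \eqref{eq:sde}, and uniqueness for \eqref{eq:sde} among $[0,\infty)$-valued solutions follows from uniqueness for the extended equation.

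For nonnegativity I would use the factorization from the introduction. Write $\tilde G(x)=x\tilde F(x)$ and $\tilde g(x)=x\tilde f(x)$, where $\tilde F(x)=\int_0^1 \tilde G'(\theta x)\dd\theta$ and similarly for $\tilde f$. These are bounded measurable functions with $|\tilde F|\le\|G'\|_\infty$ and $|\tilde f|\le \|g'\|_\infty$. Set $a(t)=\tilde F(\tilde X(t))$ and $\alpha(t)=\tilde f(\tilde X(t))$, which are bounded adapted processes. Then $\tilde X$ solves the linear equation $\dd Y=a(t)Y\dd t+\alpha(t)Y\dd B$. By It\^o's formula, together with uniqueness for linear SDEs with bounded coefficients, this gives
\[
\tilde X(t)=x_0\exp\left(\int_0^t\Bigl(a(s)-\tfrac12\alpha(s)^2\Bigr)\dd s+\int_0^t\alpha(s)\dd B(s)\right)\ge 0.
\]
The step I expect to need the most care is verifying this identity. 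To do so, I would define $Z(t)$ as the exponential above without the factor $x_0$ and apply It\^o's product rule to $\tilde X(t)Z(t)^{-1}$, showing that its differential vanishes. This yields $\tilde X=x_0Z$ almost surely for all $t$, using continuity of paths. An alternative that avoids the representation is Tanaka's formula applied to $\tilde X^-$: on $\{\tilde X\le 0\}$ both coefficients vanish, so $\E[\tilde X(t)^-]=0$. I would keep this as a backup.

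For the moment bound \eqref{momentExact}, I would apply It\^o's formula to $X(t)^{2p}$ and use the linear growth bounds \eqref{eq:Gg-growth}. This gives
\[
\dd X^{2p}\le \Bigl(2p\|G'\|_\infty+p(2p-1)\|g'\|_\infty^2\Bigr)X^{2p}\dd t+2pX^{2p-1}g(X)\dd B.
\]
To remove the stochastic integral when taking expectations, I would localize with the stopping times $\tau_R=\inf\{t: X(t)\ge R\}$. Up to $\tau_R$ the integrand is bounded, so the stochastic integral has zero mean. Gronwall's lemma then gives $\E[X(t\wedge\tau_R)^{2p}]\le x_0^{2p}e^{C_pt}$, and Fatou's lemma as $R\to\infty$ yields \eqref{momentExact} with $C_p(T,x_0)=x_0^{2p}e^{C_pT}$. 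Alternatively, the explicit exponential representation gives the bound directly, since $a$ and $\alpha$ are bounded.
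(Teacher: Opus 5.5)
Your argument is correct, but it takes a different route from the paper in both halves.

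\textbf{Existence, uniqueness and nonnegativity.} The paper only invokes a standard fixed-point argument together with a comparison principle. Since $G(0)=g(0)=0$, the zero process solves the equation, and comparison with it yields $X(t)\ge 0$. This tacitly requires coefficients defined on all of $\R$, which the paper does not spell out.

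You make that extension explicit via $x\mapsto G(\max(x,0))$ and $x\mapsto g(\max(x,0))$. You then obtain positivity from the representation $\tilde X(t)=x_0Z(t)$, where $Z$ is the stochastic exponential built from the bounded adapted processes $a=\tilde F(\tilde X)$ and $\alpha=\tilde f(\tilde X)$. Your product-rule computation $\dd\bigl(\tilde X Z^{-1}\bigr)=0$ is correct, and it in fact makes the appeal to uniqueness for linear SDEs unnecessary.

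What your route buys:
\begin{itemize}
\item it is self-contained and needs no external comparison theorem;
\item it gives the sharper conclusion that $X(t)>0$ for all $t$ when $x_0>0$, and $X\equiv 0$ when $x_0=0$;
\item it is exactly the structure that the scheme~\eqref{eq:scheme} discretizes, by freezing $a$ and $\alpha$ at $X_n$ on each step.
\end{itemize}

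\textbf{Moment bound.} The paper applies the H\"older and BDG inequalities to the integral equation, uses~\eqref{eq:Gg-growth}, and closes with Gr\"onwall. This is the same template reused later for the increment bounds. However, it tacitly needs $\sup_{t\le T}\E[|X(t)|^{2p}]<\infty$ a priori, which is available from the Picard construction but not stated.

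Your It\^o-formula argument with the stopping times $\tau_R$, followed by Gr\"onwall and Fatou, handles that finiteness issue rigorously. Your alternative via the exponential representation gives the explicit constant $x_0^{2p}e^{C_pT}$ at once; Novikov applies there because $2p\alpha$ is bounded.

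\textbf{Minor remark on your backup.} Tanaka's formula for $\tilde X^-$ also contains a term $\frac12L^0_t(\tilde X)$, which your one-line justification skips. It does vanish here, for instance by the occupation-time formula combined with $\tilde g(x)^2\le\|g'\|_\infty^2x^2$, but that step would have to be argued. Since your main argument does not rely on Tanaka, the proof stands as written.
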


\begin{proof}
Owing to Assumption~\ref{assInit}, the mappings $G$ and $g$ are Lipschitz continuous on $[0,\infty)$, and they satisfy the condition~\eqref{eq:Gg0}. Combining a standard fixed point argument and a
comparison principle (see~\cite{yamada}, for instance) shows that there exists a unique solution $\bigl(X(t)\bigr)_{t\ge 0}$ and that almost surely $X(t)\ge 0$ for all $t\ge 0$.

It remains to prove the moment bounds~\eqref{momentExact}. Let $T\in(0,\infty)$ and $p\in[1,\infty)$. For all $t\ge 0$, one has
\[
X(t)=x_0+\int_{0}^{t}G(X(s))\dd s+\int_{0}^{t}g(X(s))\dd B(s).
\]
Applying the H\"older inequality, the BDG inequality~\eqref{eq:BDG}, and the linear growth property~\eqref{eq:Gg-growth} of the mappings $G$ and $g$, for all $t\in[0,T]$ one obtains
\begin{align*}
\E\left[|X(t)|^{2p}\right]&\le C_p(T)\Bigl(|x_0|^{2p}+\int_{0}^{t}\E\left[|G(X(s))|^{2p}\right]\dd s+\int_{0}^{t}\E\left[|g(X(s))|^{2p}\right]\dd s\Bigr)\\
&\le C_p(T)\Bigl(|x_0|^{2p}+\|G'\|_\infty^{2p}\int_{0}^{t}\E\left[|X(s)|^{2p}\right]\dd s+\|g'\|_\infty^{2p}\int_{0}^{t}\E\left[|X(s)|^{2p}\right]\dd s\Bigr).
\end{align*}
Applying the Gr\"onwall lemma then provides the moment bounds \eqref{momentExact}. This concludes the proof of ~\Cref{propMomEx}.
\end{proof}

The definition of positivity preserving schemes is based on an equivalent formulation of the SDE~\eqref{eq:sde} presented below.

Define the mappings $F,f\colon\R^+\to\R$ by
\[
F(x)=\int_{0}^{1}G'(\theta x)\dd \theta,\qquad f(x)=\int_{0}^{1}g'(\theta x)\dd \theta,\qquad \forall~x\in\R^+.
\]
Due to the condition~\eqref{eq:Gg0}, one has the factorization property
\begin{equation}\label{eq:facto}
G(x)=xF(x),\quad g(x)=xf(x),\qquad \forall~x\in\R^+.
\end{equation}
Moreover, owing to Assumption~\ref{assCoeff} the mappings $F$ and $f$ are of class $\mathcal{C}^1$, and satisfy
\begin{align}
\|F\|_\infty \le \|G'\|_\infty<\infty&,\quad \|F'\|_\infty \le \|G''\|_\infty<\infty,\label{eq:boundF}\\
\|f\|_\infty \le \|g'\|_\infty<\infty&,\quad \|f'\|_\infty \le \|g''\|_\infty<\infty.\label{eq:boundf}
\end{align}
Moreover, owing to the identity $g'f=f^2+f'g$ following from~\eqref{eq:facto}, the mapping $f'g=g'f-f^2$ is also bounded:
\begin{equation}
\|f'g\|_\infty\le 2\|g'\|_\infty^2<\infty.\label{eq:boundf'g}
\end{equation}

Based on the factorization~\eqref{eq:facto} of the coefficients $G$ and $g$, we thus consider the following equivalent formulation of the SDE~\eqref{eq:sde}
\begin{equation}\label{eq:sdef}
\left\lbrace
\begin{aligned}
\dd X(t)&=F(X(t))X(t)\dd t+f(X(t))X(t)\dd B(t),\quad t\ge 0,\\
X(0)&=x_0.
\end{aligned}
\right.
\end{equation}

\subsection{Stratonovich stochastic differential equation}

In this article, we mainly focus on stochastic differential equations with It\^o interpretation of the noise, however considering the Stratonovich interpretation is helpful to explain how the first-order positivity preserving scheme is constructed below.

Given the drift and diffusion coefficients $G$ and $g$, consider the Stratonovich stochastic differential equation
\begin{equation}\label{eq:sdeStrato}
\left\lbrace
\begin{aligned}
\dd \mathbf{X}(t)&=G(\mathbf{X}(t))\dd t+g(\mathbf{X}(t))\circ \dd B(t),\quad t\ge 0,\\
\mathbf{X}(0)&=x_0.
\end{aligned}
\right.
\end{equation}
The Stratonovich SDE~\eqref{eq:sdeStrato} admits the equivalent formulation as the It\^o SDE
\begin{equation}\label{eq:sdeStratoIto}
\left\lbrace
\begin{aligned}
\dd \mathbf{X}(t)&=\overline{G}(\mathbf{X}(t))\dd t+g(\mathbf{X}(t))\dd B(t),\quad t\ge 0,\\
\mathbf{X}(0)&=x_0,
\end{aligned}
\right.
\end{equation}
where the drift coefficient $\overline{G}$ is defined as
\begin{equation}\label{eq:Gbar}
\overline{G}=G+\frac12 g'g.
\end{equation}
Observe that if $G$ and $g$ satisfy the condition~\eqref{eq:Gg0}, then one has $\overline{G}(0)=0$, and one obtains the factorization
\begin{equation}
\overline{G}(x)=\overline{F}(x)x,\qquad \forall~x\in\R^+,
\end{equation}
where the mapping $\overline{F}\colon\R^+\to\R$ is defined as
\begin{equation}\label{eq:Fbar}
\overline{F}(x)=F(x)+\frac12 g'(x)f(x),\qquad \forall~x\in\R^+.
\end{equation}
Under Assumption~\ref{assCoeff}, the mapping $\overline{F}$ is continuous and bounded. Note that even if $G$ and $g$ satisfy Assumption~\ref{assCoeff}, it is necessary to impose stronger conditions on $g$ to ensure that $\overline{G}$ is of class $\mathcal{C}^2$ with bounded first and second order derivatives. It is sufficient for instance to assume in addition that $g$ is of class $\mathcal{C}^3$, and that $g$ and $g'''$ are bounded. When appropriate regularity and boundedness conditions hold for $\overline{G}$, the results of \Cref{propMomEx} also apply for the Stratonovich stochastic differential equation~\eqref{eq:sdeStrato}.

\section{First-order positivity preserving numerical scheme}\label{secPosi}

In this section, we present the construction of a new first-order positivity preserving scheme for the It\^o SDE~\eqref{eq:sde}, and for the Stratonovich SDE~\eqref{eq:sdeStrato}, see Section~\ref{sec:constructionscheme}. The almost sure preservation of positivity and the first-order convergence result are stated in Section~\ref{sec:main}. Finally, we illustrate our main results with numerical experiments in Section~\ref{sec:num}.

For an arbitrary time horizon $T\in(0,\infty)$ and a positive integer $N\in\N$, we define the time-step size by $\dt=T/N$.
For all $n\in\{0,\ldots,N-1\}$, set $t_n=n\dt$ for the time grid and define the Wiener increments by $\delta B_n=B(t_{n+1})-B(t_n)$.

\subsection{Construction of the numerical schemes}\label{sec:constructionscheme}

As a first step for the construction of the higher-order scheme for~\eqref{eq:sde}, it is convenient to recall first the construction of a basic numerical scheme with strong order $1/2$.

Let $n\in\{0,\ldots,N-1\}$, and assume that the approximation $X_n$ of the exact solution $X(t_n)$ at time $t_n$ has been constructed. Assume also that $X_n$ is a $\mathcal{F}_{t_n}$-measurable random variable. The value $X_{n+1}$ of the scheme at time $t_{n+1}=t_n+\dt$ is constructed by considering the equivalent formulation~\eqref{eq:sdef} of~\eqref{eq:sde} on the interval $[t_n,t_{n+1}]$, and by freezing the coefficients $F$ and $f$ at the numerical approximation
$X_n$: one then considers the auxiliary SDE
\begin{equation}\label{eq:auxsdescheme}
\left\lbrace
\begin{aligned}
\dd X_n(t)&=F(X_n)X_n(t)\dd t+f(X_n)X_n(t)\dd B(t),\quad t\in[t_n,t_{n+1}],\\
X_n(t_n)&=X_n,
\end{aligned}
\right.
\end{equation}
on the interval $[t_n,t_{n+1}]$, with initial value $X_n(t_n)=X_n$. The exact solution of the auxiliary SDE~\eqref{eq:auxsdescheme} is known: for all $t\in[t_n,t_{n+1}]$ one has
\[
X_n(t)=\exp\left(F(X_n)(t-t_n)+f(X_n)\bigl(B(t)-B(t_n)\bigr)-\frac12 f(X_n)^2(t-t_n)\right)X_n
.
\]
Setting $X_{n+1}=X_n(t_{n+1})$ provides the definition of the basic explicit numerical scheme: given the initial value $X_0=x_0$, for all $n\in\{0,\ldots,N-1\}$, set
\begin{equation}\label{eq:schemedemi}
X_{n+1}=\exp\left(F(X_n)\dt+f(X_n)\delta B_n-\frac12 f(X_n)^2\dt\right)X_n.
\end{equation}

Observe that the scheme~\eqref{eq:schemedemi} is positivity preserving: under the assumption that $X_0=x_0\ge 0$, for any value $\dt=T/N$ of the time-step size, almost surely $X_n\ge 0$ for all $n\in\{1,\ldots,N\}$. In addition, the scheme is consistent with the It\^o SDE~\eqref{eq:sde} and converges strongly with order $1/2$: using simplified version of the arguments developed below, one is able to show that for all $p\in[1,\infty)$ one has
\[
\underset{0\le n\le N}\sup~\bigl(\E[|X_n-X(t_n)|^p]\bigr)^{\frac1p}\lesssim\dt^{\frac12}.
\]
The details of the proof are not provided, since the objective of this article is to prove first-order strong convergence of a new positivity preserving numerical scheme. In the numerical experiments reported in Section~\ref{sec:num}, it will be verified that the strong order of convergence $1/2$ is observed and optimal for the scheme~\eqref{eq:schemedemi} applied to~\eqref{eq:sde} (when $F$ and $f$ are not constant).

A version of the scheme~\eqref{eq:schemedemi} for the Stratonovich SDE~\eqref{eq:sdeStrato} is  derived, considering its equivalent It\^o formulation: set $\mathbf{X}_0=x_0$, and for all $n\in\{0,\ldots,N-1\}$ set
\[
\mathbf{X}_{n+1}=\exp\left(\overline{F}(\mathbf{X}_n)\dt+f(\mathbf{X}_n)\delta B_n-\frac12 f(\mathbf{X}_n)^2\dt\right)\mathbf{X}_n.
\]
Recalling that $\overline{F}$ is given by~\eqref{eq:Fbar} and the identity $g'f=f^2+f'g$, the scheme is rewritten as
\begin{equation}\label{eq:schemedemiStrato}
\mathbf{X}_{n+1}=\exp\left(F(\mathbf{X}_n)\dt+f(\mathbf{X}_n)\delta B_n+\frac12(f'g)(\mathbf{X}_n)\dt\right)\mathbf{X}_n.
\end{equation}

\begin{remark}
The scheme~\eqref{eq:schemedemiStrato} is positivity preserving and provides an approximation at order $1/2$ of the solution to the Stratonovich SDE~\eqref{eq:sdeStrato}. Note that one cannot drop the term $\frac12(f'g)(\mathbf{X}_n)\dt$ in the exponential: the scheme
\[
\mathbf{X}_{n+1}=\exp\left(F(\mathbf{X}_n)\dt+f(\mathbf{X}_n)\delta B_n\right)\mathbf{X}_n
\]
associated with solving exactly auxiliary SDEs
\[
\left\lbrace
\begin{aligned}
\dd \mathbf{X}_n(t)&=F(\mathbf{X}_n)\mathbf{X}_n(t)\dd t+f(\mathbf{X}_n)\mathbf{X}_n(t)\circ\dd B(t),\quad t\in[t_n,t_{n+1}],\\
\mathbf{X}_n(t_n)&=\mathbf{X}_n,
\end{aligned}
\right.
\]
on each interval $[t_n,t_{n+1}]$ is not consistent with the Stratonovich SDE~\eqref{eq:sdeStrato}. This means that the factorization technique is well-adapted to the It\^o interpretation of the noise. When the noise is interpreted in the Stratonovich sense, one needs to consider the equivalent It\^o formulation to ensure consistency of the numerical scheme.
\end{remark}
The new first-order positivity preserving scheme can now be given. First, if one considers the Stratonovich SDE~\eqref{eq:sdeStrato}, substituting $\frac12(f'g)(\mathbf{X}_n)\dt$ by $\frac12(f'g)(\mathbf{X}_n)\delta B_n^2$ in the scheme \eqref{eq:schemedemiStrato}, one obtains the following scheme: set $\mathbf{X}_0=x_0$, and for all $n\in\{0,\ldots,N-1\}$ set
\begin{equation}\label{eq:schemeStrato}
\mathbf{X}_{n+1}=\exp\left(F(\mathbf{X}_n)\dt+f(\mathbf{X}_n)\delta B_n+\frac12(f'g)(\mathbf{X}_n)\delta B_n^2\right)\mathbf{X}_n.
\end{equation}
By construction, the new scheme~\eqref{eq:schemeStrato} is positivity preserving. Its consistency with the Stratonovich SDE~\eqref{eq:sdeStrato} is not obvious, it can be justified by Remark~\ref{rem:Stratoscheme} below, and is rigorously proved in the sequel.

\begin{remark}\label{rem:Stratoscheme}
To justify why the substitution above is relevant for reaching higher-order of convergence, assume that $G(x)=F(x)=0$ for all $x\in\R^+$. The scheme~\eqref{eq:schemeStrato} can be written as
\[
\mathbf{X}_{n+1}=\mathbf{\Phi}_{\delta B_n}(\mathbf{X}_n),
\]
where the mapping $(s,x)\in\R\times\R^+\mapsto\mathbf{\Phi}_s(x)$ is defined by
\[
\mathbf{\Phi}_s(x)=\exp\left(f(x)s+\frac12(f'g)(x)s^2\right)x.
\]
It is straightforward to check that for any fixed $x\in\R^+$, when $s\to 0$ one has
\begin{align*}
\mathbf{\Phi}_s(x)&=x+\bigl(f(x)x\bigr)s+\frac12\bigl(f(x)^2x+f'(x)g(x)x\bigr)s^2+{\rm O}(s^3)\\
&=x+g(x)s+\frac12(g'g)(x)s^2+{\rm O}(s^3)\\
&=\varphi_s(x)+{\rm O}(s^3),
\end{align*}
where $(s,x)\in\R\times\R^+\mapsto\varphi_s(x)$ is the flow associated to the ordinary differential equation
\[
\dot{x}=g(x).
\]
The mapping $\mathbf{\Phi}$ can be interpreted as a second-order integrator applied to that ordinary differential equation. The exact solution to the Stratonovich SDE~\eqref{eq:sdeStrato} when $G=0$ is given by
\[
\mathbf{X}(t)=\varphi_{B(t)}(x_0),\qquad \forall~t\ge 0.
\]
As a result, the scheme~\eqref{eq:schemeStrato} can naturally be expected to provide a first-order approximation for the Stratonovich SDE~\eqref{eq:sdeStrato}. In the sequel, we do not provide a direct proof of this result using the ideas described above, it will follow from the analysis performed for It\^o SDEs.
\end{remark}

Having constructed the new scheme~\eqref{eq:schemeStrato} for the Stratonovich SDE~\eqref{eq:sdeStrato}, we are now in the position to introduce it for the It\^o SDE~\eqref{eq:sde}, which admits the equivalent Stratonovich formulation
\[
\left\lbrace
\begin{aligned}
\dd X(t)&=\widehat{G}(X(t))\dd t+g(X(t))\circ \dd B(t),\quad t\ge 0,\\
X(0)&=x_0,
\end{aligned}
\right.
\]
with drift coefficient
\[
\widehat{G}=G-\frac12g'g.
\]
Observe that one has $\widehat{G}(0)=0$ and the factorization
\[
\widehat{G}(x)=\widehat{F}(x)x,\qquad \forall~x\in\R^+
\]
where the mapping $\widehat{F}\colon\R^+\to\R$ is defined as
\[
\widehat{F}(x)=F(x)-\frac12g'(x)f(x).
\]
Applying the new positivity preserving scheme~\eqref{eq:schemeStrato} for the  Stratonovich formulation above, one obtains the scheme defined by
\begin{align*}
{X}_{n+1}&=\exp\left(\widehat{F}({X}_n)\dt+f({X}_n)\delta B_n+\frac12(f'g)({X}_n)\delta B_n^2\right){X}_n\\
&=\exp\left(F(X_n)\dt+f(X_n)\delta B_n+\frac12\bigl(f' g\bigr)(X_n)\delta B_n^2-\frac12 \bigl(g'f\bigr)(X_n)\dt\right)X_n.
\end{align*}
Recalling again the identity $g'f=f^2+f'g$ finally yields the definition of the new first-order positivity preserving scheme for the It\^o SDE~\eqref{eq:sde}:
set $X_0=x_0$, and for all $n\in\{0,\ldots,N-1\}$, set
\begin{equation}\label{eq:scheme}
X_{n+1}=\exp\left(F(X_n)\dt+f(X_n)\delta B_n+\frac12\bigl(f' g\bigr)(X_n)\bigl[\delta B_n^2-\dt]-\frac12 f(X_n)^2\dt\right)X_n.
\end{equation}
Note that by construction the scheme~\eqref{eq:schemeStrato} for the Stratonovich SDE~\eqref{eq:sdeStrato} can be retrieved from the scheme~\eqref{eq:scheme} when it is applied to its equivalent It\^o formulation~\eqref{eq:sdeStratoIto}.

The properties of the scheme~\eqref{eq:scheme} are presented in Section~\ref{sec:main} and illustrated by numerical experiments in Section~\ref{sec:num}.

\begin{remark}
The explicit positivity preserving schemes~\eqref{eq:schemedemi} and~\eqref{eq:scheme} can respectively be written as
\begin{align*}
X_{n+1}&=\Phi\left(f(X_n)^2\dt,F(X_n)\dt+f(X_n)\delta B_n,X_n\right),\\
X_{n+1}&=\Phi\left(f(X_n)^2\dt,F(X_n)\dt+f(X_n)\delta B_n+\frac12\bigl(f' g\bigr)(X_n)\bigl[\delta B_n^2-\dt],X_n\right),
\end{align*}
where the mapping $\Phi\colon\R^+\times\R\times\R^+$ is defined by
\[
\Phi(s,\gamma,y)=\exp\left(\gamma-\frac{s}{2}\right)y.
\]
Observe that the mapping $\Phi$ provides the expression
\[
Y(t)=\Phi(t,B(t),y_0),\qquad \forall~t\ge 0,
\]
for the geometric Brownian motion which solves the auxiliary It\^o stochastic differential equation
\[
\dd Y(t)=Y(t)\dd B(t),\quad Y(0)=y_0.
\]
The schemes~\eqref{eq:schemedemi} and~\eqref{eq:scheme} thus have the same structure as the explicit domain preserving schemes introduced and studied in~\cite{DP}.
\end{remark}

\subsection{Main results}\label{sec:main}

In this section, we study properties of the scheme~\eqref{eq:scheme} applied to the It\^o SDE~\eqref{eq:sde}. Similar properties can be deduced for the scheme~\eqref{eq:schemeStrato} applied to the Stratonovich SDE~\eqref{eq:sdeStrato} under appropriate assumptions.

First, the scheme~\eqref{eq:scheme} is positivity preserving.
\begin{proposition}\label[proposition]{prop:posi}
Consider the SDE~\eqref{eq:sde} under Assumptions~\ref{assInit}~and~\ref{assCoeff}.
The numerical scheme~\eqref{eq:scheme} is positivity preserving: for any time $T\in(0,\infty)$ and any time-step size $\dt=T/N$ with $N\in\N$, almost surely, one has
\[
X_n\ge 0,\quad\forall n\in\{0,\ldots,N\}.
\]
\end{proposition}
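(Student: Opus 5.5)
The plan is a direct induction on $n$, using only that the scheme~\eqref{eq:scheme} multiplies the previous iterate by the exponential of a real number. I first check that every quantity in the exponent is a well-defined, finite real random variable whenever $X_n\ge 0$. By Assumption~\ref{assCoeff} and the construction of $F$ and $f$, these maps are defined and continuous on $\R^+$, and the bounds~\eqref{eq:boundF}, \eqref{eq:boundf} and \eqref{eq:boundf'g} show that $F$, $f$ and $f'g$ are bounded there. Hence, if $X_n\in[0,\infty)$ almost surely, the random variable
\[
Z_n=F(X_n)\dt+f(X_n)\delta B_n+\frac12\bigl(f'g\bigr)(X_n)\bigl[\delta B_n^2-\dt\bigr]-\frac12 f(X_n)^2\dt
\]
is almost surely finite and real-valued, since $\delta B_n$ is a Gaussian random variable and therefore almost surely finite.

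The induction runs as follows. The base case is Assumption~\ref{assInit}: $X_0=x_0\ge 0$. For the inductive step, suppose that almost surely $X_n\ge 0$. Then $X_{n+1}=\exp(Z_n)X_n$ with $\exp(Z_n)\in(0,\infty)$, so $X_{n+1}\ge 0$ almost surely. In addition, $X_{n+1}$ is $\mathcal{F}_{t_{n+1}}$-measurable and lies in $[0,\infty)$, the domain of $F$, $f$ and $f'g$, so the next step of the scheme is again well-defined. Intersecting the finitely many almost sure events over $n\in\{0,\ldots,N\}$ gives the simultaneous statement. This holds for every $N$, because no restriction on $\dt$ enters the argument.

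The only point needing care is well-definedness: the coefficients are only defined on $\R^+$, so the inductive hypothesis must be used to justify evaluating them at $X_n$. Nonnegativity and well-definedness therefore have to be proved together in the same induction. I also note that if $x_0>0$, the same argument gives strict positivity $X_n>0$ for all $n$, and if $x_0=0$ then $X_n=0$ for all $n$.
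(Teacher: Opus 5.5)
Your proof is correct and follows the paper's argument: the paper also proceeds by induction on $n$, with base case $X_0=x_0\ge 0$, and reads the inductive step directly off the multiplicative form $X_{n+1}=\exp(Z_n)X_n$ of the scheme. Your additional points are correct refinements that the paper leaves implicit: that $F$, $f$ and $f'g$ can only be evaluated at $X_n$ because $X_n\in[0,\infty)$, so well-definedness and nonnegativity go through the same induction, and that $X_n>0$ for all $n$ when $x_0>0$.
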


\begin{proof}[Proof of \Cref{prop:posi}]
It suffices to apply a recursion argument on the index $n$.

First, if $n=0$, owing to Assumption~\ref{assInit} one has $X_0=x_0\geq0$.

Second, let $n\in\{0,\ldots,N-1\}$ and assume that almost surely one has $X_n\ge 0$. By construction of the numerical scheme~\eqref{eq:scheme}, one obtains the property $X_{n+1}\ge 0$ almost surely.

Completing the recursion argument then concludes the proof of \Cref{prop:posi}.
\end{proof}

It is straightforward to adapt \Cref{prop:posi} for the scheme~\eqref{eq:schemeStrato} applied to the Stratonovich SDE~\eqref{eq:sdeStrato}: the scheme~\eqref{eq:schemeStrato} is thus domain preserving.

The main result of this article, \Cref{theoMain}, states that the positivity preserving numerical scheme~\eqref{eq:scheme} converges strongly with order $1$.

\begin{theorem}\label[theorem]{theoMain}
Consider the It\^o SDE~\eqref{eq:sde} under Assumptions~\ref{assInit}~and~\ref{assCoeff}.

For all $T\in(0,\infty)$ and $p\in[1,\infty)$, there exists $C_p(T,x_0)\in(0,\infty)$ and $\tau_p\in(0,1)$, such that for any time-step size $\dt=T/N$, with $N\in\N$, which satisfies the condition $\dt\le \tau_p$, one has
\begin{equation}\label{eq:theoMain}
\underset{0\le n\le N}\sup~\bigl(\E\left[|X_n-X(t_n)|^{2p}\right]\bigr)^{\frac{1}{2p}}\le C_p(T,x_0)\dt.
\end{equation}
\end{theorem}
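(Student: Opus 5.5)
The strategy is to compare the scheme~\eqref{eq:scheme} with the Milstein scheme for~\eqref{eq:sde}, which is known to converge with strong order one. The comparison is made through a one-step local error analysis combined with a discrete Gr\"onwall argument in $L^{2p}$. Write one step of the scheme as $X_{n+1}=\exp(\Gamma(X_n,\delta B_n))X_n$ with
\[
\Gamma(x,\delta B_n)=F(x)\dt+f(x)\delta B_n+\tfrac12(f'g)(x)\bigl[\delta B_n^2-\dt\bigr]-\tfrac12 f(x)^2\dt .
\]
Expanding $e^{\Gamma}=1+\Gamma+\frac12\Gamma^2+R$ and using the factorization~\eqref{eq:facto} together with $g'g=x(f^2+f'g)$, we obtain
\[
X_{n+1}=X_n+G(X_n)\dt+g(X_n)\delta B_n+\tfrac12(g'g)(X_n)\bigl[\delta B_n^2-\dt\bigr]+X_n\,\rho_n ,
\]
where $\rho_n$ gathers the terms of order $\dt^{3/2}$. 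This is the main cancellation. The $\frac12 f^2\delta B_n^2$ coming from $\frac12\Gamma^2$ combines with $-\frac12 f^2\dt$ to produce $\frac12 f^2[\delta B_n^2-\dt]$, and adding $\frac12 f'g[\delta B_n^2-\dt]$ gives exactly the Milstein correction. The remainder $\rho_n$ consists of terms such as $f(X_n)F(X_n)\dt\,\delta B_n$, $(f f'g)(X_n)\delta B_n(\delta B_n^2-\dt)$ and $\Gamma^3 e^{\theta\Gamma}$. The coefficients $F$, $f$ and $f'g$ are bounded by~\eqref{eq:boundF}--\eqref{eq:boundf'g}, so $\Gamma$ has Gaussian-type tails uniformly in $x$. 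This yields $\E[|\rho_n|^{q}\mid\mathcal F_{t_n}]^{1/q}\le C_q\dt^{3/2}$ and, in addition, $|\E[\rho_n\mid\mathcal F_{t_n}]|\le C\dt^2$. The conditional-mean bound holds because the $\dt^{3/2}$ terms are odd in $\delta B_n$. Here $\E[e^{c\delta B_n^2}]<\infty$ requires $\dt$ small, which is why the restriction $\dt\le\tau_p$ appears.

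First, I would establish uniform moment bounds $\sup_n\E[X_n^{2q}]\le C_q(T,x_0)$ for every $q$. Conditionally on $\mathcal F_{t_n}$,
\[
\E\bigl[X_{n+1}^{2q}\mid\mathcal F_{t_n}\bigr]=X_n^{2q}\,\E\bigl[e^{2q\Gamma}\mid\mathcal F_{t_n}\bigr],
\]
and since $F$, $f$ and $f'g$ are bounded, $\E[e^{2q\Gamma}\mid\mathcal F_{t_n}]\le 1+C_q\dt$ for $\dt\le\tau_q$. The second condition ensures $\E[e^{q\|f'g\|_\infty\delta B_n^2}]<\infty$, and the first-order term has conditional mean $O(\dt)$. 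Iterating this bound gives the moment estimate. Next, I would prove a preliminary strong order $1/2$ estimate for all $p$. This step is needed to control cross terms involving the Lipschitz behaviour of the non-globally-Lipschitz coefficient $g'g$, through~\eqref{eq:boundg'g}.

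For the error $e_n=X_n-X(t_n)$, I would write the exact solution over one step using the It\^o--Taylor expansion:
\[
X(t_{n+1})=X(t_n)+G(X(t_n))\dt+g(X(t_n))\delta B_n+\tfrac12(g'g)(X(t_n))\bigl[\delta B_n^2-\dt\bigr]+r_n .
\]
Here $r_n$ splits into a martingale part of size $\dt^{3/2}$ in $L^{2p}$ and a drift part of size $\dt^{2}$. The $\dt^{2}$ bound on the drift part requires Itô's formula for $G(X(s))$, which is allowed since $G\in\mathcal C^2$. Subtracting the two expansions gives
\[
e_{n+1}=e_n+\bigl[G(X_n)-G(X(t_n))\bigr]\dt+\bigl[g(X_n)-g(X(t_n))\bigr]\delta B_n+\tfrac12\bigl[(g'g)(X_n)-(g'g)(X(t_n))\bigr]\bigl[\delta B_n^2-\dt\bigr]+X_n\rho_n-r_n .
\]
I would sum this from $0$ to $n-1$ and split every local term into its conditional expectation plus a martingale increment. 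The martingale parts are handled by the discrete BDG inequality~\eqref{eq:BDGdiscrete}: they contribute $\sum_k\dt^3$, so $\dt^2$ after the power $2$, giving order $\dt$. The conditional means are $O(\dt^2)$ per step and so sum to $O(\dt)$. The Lipschitz differences are controlled by $|e_k|$, using the moment bounds together with H\"older's inequality. For the $g'g$ difference, $g'g$ is only locally Lipschitz with linear growth of its derivative. I would bound that difference by $C(1+X_n+X(t_n))|e_n|$ and use H\"older, but this gives only $\|e_n\|_{L^{4p}}$ on the right-hand side. To close the estimate, I would instead use that the $(g'g)$-difference term is multiplied by $\delta B_n^2-\dt$. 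As a martingale increment, it contributes $\dt^{2}\,\E[(1+\cdots)^{2p}|e_k|^{2p}]$ per step, and after applying H\"older this can be absorbed using the preliminary order-$1/2$ bound in a higher moment: $\dt^2\cdot\dt^{1/2\cdot 2p/(2p)}$ summed over $k$ gives a small enough quantity. The discrete Gr\"onwall lemma then yields~\eqref{eq:theoMain}.

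I expect the main obstacle to be the rigorous control of the remainder $\rho_n$ coming from the exponential. The scheme is not a polynomial in $\delta B_n$, so I must show uniformly in $X_n\ge0$ that $X_n\rho_n$ has conditional $L^{2p}$ norm $C\dt^{3/2}X_n$ and conditional mean $C\dt^2X_n$. This requires exponential moments of $\delta B_n^2$, which is the source of the constraint $\dt\le\tau_p$. The oddness cancellation behind the conditional-mean bound must be tracked carefully, since the terms are cubic in $\delta B_n$ and include the mixed term $\dt\,\delta B_n$.
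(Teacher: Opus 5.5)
Your proposal is correct, and its skeleton is in fact the paper's. If you sum your one-step identity, you get $\sum_{k<n}r_k=\varepsilon_n^{(1,1)}+\varepsilon_n^{(1,2)}$, and your Milstein-map differences are $\varepsilon_n^{(3,1)},\varepsilon_n^{(3,2)},\varepsilon_n^{(3,3)}$. Likewise $\sum_{k<n}X_k\rho_k=\sum_{j=1}^{5}\varepsilon_n^{(2,j)}$, because $\int_{t_k}^{t_{k+1}}[B(t)-B(t_k)]\dd B(t)=\frac12(\delta B_k^2-\dt)$ and $fg+x\,f'g=g'g$. The two-step bootstrap for the $g'g$ term is also exactly the paper's.

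The genuine difference is how you control the scheme's local defect $X_k\rho_k$. The paper never Taylor-expands the exponential. It introduces the interpolant $\widetilde X$ of \eqref{eqAux}, derives its SDE \eqref{auxSDE} by It\^o's formula, and writes the defect as iterated It\^o/Lebesgue integrals. These are bounded with BDG, stochastic Fubini and Propositions~\ref{propMomNum}--\ref{propregExNum}. The parity cancellation you need is automatic there: the $O(\dt^{3/2})$ pieces are stochastic integrals (see the Fubini rewriting in the treatment of $\varepsilon_n^{(2,4)}$), so \eqref{eq:BDGdiscrete} applies directly.

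Your route is purely discrete, in the spirit of Milstein's mean/mean-square local error theorem, and it makes the cancellation $g'g=x(f^2+f'g)$ visible at the level of the one-step map. The cost is twofold. First, you need a Taylor remainder of $e^{\Gamma}$ controlled by exponential moments of $|\Gamma|$. Since $\Gamma$ contains $\delta B_n^2$, its tails are exponential rather than Gaussian, and you need $\dt$ small for both signs of $f'g$. Second, you need Burkholder's inequality for general discrete martingale differences instead of the stochastic-integral version \eqref{eq:BDGdiscrete}. The conditional-mean bound you single out as the main obstacle is in fact immediate from \eqref{eq:auxgaussian}. It gives $\E[e^{\Gamma}\mid\mathcal F_{t_n}]=\exp\bigl(F(X_n)\dt+O(\dt^2)\bigr)$ uniformly in $X_n$, hence $\E[X_n\rho_n\mid\mathcal F_{t_n}]=X_n\,O(\dt^2)$ with no parity bookkeeping.

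One exponent in your final step is off. The factor supplied by the order-$1/2$ estimate is $\bigl(\E[|e_k|^{4p}]\bigr)^{1/2}\le C\dt^{p}$ (that is, $\dt$ for $p=1$), not $\dt^{1/2}$. The per-step BDG weight is $N^{p-1}\E[|\delta B_k^2-\dt|^{2p}]\sim N^{p-1}\dt^{2p}$, so summing over $N$ steps gives $T^{p}\dt^{2p}$, as required. With the factor $\dt^{1/2}$ as you wrote it, you would only get $\E[|e_n|^2]\lesssim\dt^{3/2}$, that is, order $3/4$.
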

The proof of \Cref{theoMain} is given in \Cref{secProof}.

An immediate consequence of \Cref{theoMain} is the following result which provides strong and almost sure pathwise convergence with order $1-\varepsilon$ with arbitrary $\varepsilon\in(0,1)$.
\begin{corollary}\label{corCV}
Consider the It\^o SDE~\eqref{eq:sde} under Assumptions~\ref{assInit}~and~\ref{assCoeff}.

For all $\varepsilon\in(0,1)$, $T\in(0,\infty)$ and $p\in[1,\infty)$, there exists $C_{\varepsilon,p}(T,x_0)\in(0,\infty)$ and $\tau_p\in(0,1)$ such that for any time-step size $\dt=T/N$, with $N\in\N$, which satisfies the condition $\dt\le\tau_p$, one has
\begin{equation}\label{eq:corCV1}
\left(\E\bigl[\underset{0\le n\le N}\sup~|X_n-X(t_n)|^{2p}\bigr]\right)^{\frac{1}{2p}}\le C_{\varepsilon,p}(T,x_0)\dt^{1-\varepsilon}.
\end{equation}
Moreover, there exists an almost surely finite random variable $\mathcal{M}_{\varepsilon}(T,x_0)$ such that for any time-step size $\dt=T/N$, with $N\in\N$, one has
\begin{equation}\label{eq:corCV2}
\underset{0\le n\le N}\sup~|X_n-X(t_n)|\le \mathcal{M}_{\varepsilon}(T,x_0)\dt^{1-\varepsilon},\quad \forall~N\in\N,~{\rm a.s}.
\end{equation}
\end{corollary}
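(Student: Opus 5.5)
The plan is a standard Kolmogorov/Borel--Cantelli type argument built on \Cref{theoMain}. There are two estimates to prove, and I treat them in turn.

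\emph{Step 1: the maximal estimate~\eqref{eq:corCV1}.} Fix $\varepsilon\in(0,1)$, $T$ and $p$. Pick an auxiliary exponent $q\ge p$ large enough that $\frac{1}{2q}\le\varepsilon$. Bound the supremum by a sum:
\[
\E\bigl[\underset{0\le n\le N}\sup~|X_n-X(t_n)|^{2q}\bigr]\le\sum_{n=0}^{N}\E\bigl[|X_n-X(t_n)|^{2q}\bigr]\le (N+1)C_q(T,x_0)^{2q}\dt^{2q}.
\]
This uses \Cref{theoMain} with $q$ in place of $p$, valid for $\dt\le\tau_q$. Since $N+1\le 2T\dt^{-1}$, taking the $2q$-th root gives
\[
\Bigl(\E\bigl[\sup_n|X_n-X(t_n)|^{2q}\bigr]\Bigr)^{\frac{1}{2q}}\le C\dt^{1-\frac{1}{2q}}\le C\dt^{1-\varepsilon}\quad\text{for }\dt\le 1.
\]
Jensen's (or Hölder's) inequality then gives the $L^{2p}$ bound, since $p\le q$. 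The threshold in the statement is $\tau_p$, but the natural one here is $\tau_q$, with $q$ depending on $\varepsilon$. I would handle this by relabelling the threshold as $\min(\tau_q,1)$. For the finitely many remaining $N$ with $\tau_q<\dt\le\tau_p$, the left-hand side is still finite, because both processes have bounded moments. These are \Cref{propMomEx} together with numerical moment bounds, which I assume are established in \Cref{secAux}. The finitely many cases can therefore be absorbed into the constant.

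\emph{Step 2: the almost sure estimate~\eqref{eq:corCV2}.} Define
\[
\mathcal{M}_\varepsilon=\sup_{N\in\N}\dt^{-(1-\varepsilon)}\sup_{0\le n\le N}|X_n-X(t_n)|,\qquad \dt=T/N.
\]
It suffices to show that this is almost surely finite. Apply Step 1 with $\varepsilon/2$ in place of $\varepsilon$ and an exponent $2p$ to be chosen:
\[
\E\Bigl[\bigl(\dt^{-(1-\varepsilon)}\sup_n|X_n-X(t_n)|\bigr)^{2p}\Bigr]\le C\dt^{p\varepsilon}=C\,T^{p\varepsilon}N^{-p\varepsilon}.
\]
Choose $p$ so that $p\varepsilon>1$; then these bounds are summable in $N$. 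The finitely many $N$ with $\dt>\tau$ contribute finite random variables. By monotone convergence,
\[
\E\Bigl[\sum_N\bigl(\dt^{-(1-\varepsilon)}\sup_n|X_n-X(t_n)|\bigr)^{2p}\Bigr]<\infty,
\]
so the sum is finite almost surely. Hence its terms are bounded, which makes $\mathcal{M}_\varepsilon$ finite almost surely, and it belongs to $L^{2p}$.

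The main obstacle is bookkeeping rather than analysis. One has to ensure that the exponent-dependent step-size thresholds from \Cref{theoMain} can be uniformized, either by absorbing large $\dt$ into constants via the moment bounds, or by stating the result for $\dt\le\tau$ with $\tau$ depending on $(\varepsilon,p)$.
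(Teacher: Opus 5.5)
Your proposal is correct and follows the paper's proof. Step 1 is the same sup-by-sum bound with an enlarged exponent $q\ge\max(p,1/(2\varepsilon))$, followed by Lyapunov's inequality. Step 2 uses the same rate-$(1-\varepsilon/2)$ estimate with $p\varepsilon>1$ and sums the $2p$-th moments directly instead of invoking Markov and Borel--Cantelli, which is an equivalent variant; your handling of the step-size thresholds is more careful than the paper's.

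One caveat: drop the final claim $\mathcal{M}_\varepsilon\in L^{2p}$, and restrict your displayed expectation to the $N$ with $\dt\le\tau$. For large $\dt$, the factor $\exp\bigl(p(f'g)(x_0)\delta B_0^2\bigr)$ in $X_1^{2p}$ makes $\E[X_1^{2p}]$ infinite as soon as $(f'g)(x_0)>0$ and $p\dt(f'g)(x_0)\ge\frac12$. Only almost sure finiteness is needed, and your argument does give that.
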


\begin{proof}[Proof of Corollary~\ref{corCV}]
Let $p\in[1,\infty)$ and $\varepsilon\in(0,1)$, and consider the auxiliary parameter $q=\max\bigl(p,\varepsilon^{-1}/2\bigr)$. One has the inequality
\[
\underset{0\le n\le N}\sup~|X_n-X(t_n)|^{2q}\le \sum_{n=1}^{N}|X_n-X(t_n)|^{2q}.
\]
Recalling that $\dt=T/N$ and applying the strong error estimate~\eqref{eq:theoMain} from \Cref{theoMain} (with exponent $2q$) one obtains the upper bound
\[
\E\bigl[\underset{0\le n\le N}\sup~|X_n-X(t_n)|^{2q}\bigr]\le \sum_{n=1}^{N}\E[|X_n-X(t_n)|^{2q}]\le C_q(T,x_0)\dt^{2q-1}.
\]
Finally, recalling that $q=\max\bigl(p,\varepsilon^{-1}/2\bigr)$, one has $p\le q$ and $\varepsilon>1/2q$, one obtains
\begin{align*}
\left(\E\bigl[\underset{0\le n\le N}\sup~|X_n-X(t_n)|^{2p}\bigr]\right)^{\frac{1}{2p}}&\le \left(\E\bigl[\underset{0\le n\le N}\sup~|X_n-X(t_n)|^{2q}\bigr]\right)^{\frac{1}{2q}}\\
&\le C_q(T,x_0)\dt^{1-\frac{1}{2q}}\\
&\le C_{\varepsilon,p}(T,x_0)\dt^{1-\varepsilon}.
\end{align*}
The proof of the pathwise strong error estimates~\eqref{eq:corCV1} is thus completed.

It remains to prove the almost sure error estimates~\eqref{eq:corCV2}. Let $\varepsilon\in(0,1)$ and consider the auxiliary parameter $p=2\varepsilon^{-1}$. Applying the Markov inequality and the pathwise strong error estimates~\eqref{eq:corCV1} (with rate $1-\varepsilon/2$), for all $\delta\in(0,1)$ one has
\begin{align*}
\sum_{N=1}^{\infty}\mathbb{P}\left(N^{1-\varepsilon}\underset{0\le n\le N}\sup~|X_n-X(t_n)|>\delta\right)&\le \delta^{-2p}\sum_{N=1}^{\infty}\E\left[N^{2p(1-\varepsilon)}\underset{0\le n\le N}\sup~|X_n-X(t_n)|^{2p}\right]\\
&\le C_{\varepsilon/2,p}(T,x_0)\delta^{-2p}\sum_{N=1}^{\infty}N^{2p(1-\varepsilon)}\dt^{2p(1-\frac{\varepsilon}{2})}\\
&\le C_{\varepsilon/2,p}(T,x_0)\delta^{-2p}\sum_{N=1}^{\infty}N^{-2},
\end{align*}
recalling that $\dt=T/N$ and that $p\varepsilon=2$.

The series above converges for all $\delta\in(0,1)$, therefore applying the Borel--Cantelli lemma, almost surely
\[
N^{1-\varepsilon}\underset{0\le n\le N}\sup~|X_n-X(t_n)|\underset{N\to\infty}\to 0.
\]
Then the almost sure error estimates~\eqref{eq:corCV2} is obtained where $\mathcal{M}_{\varepsilon}(T,x_0)$ is the almost surely finite random variable
\[
\mathcal{M}_{\varepsilon}(T,x_0)=T^{\varepsilon-1}\underset{N\in\N}\sup\left(N^{1-\varepsilon}\underset{0\le n\le N}\sup~|X_n-X(t_n)| \right).
\]
The proof of the almost sure error estimates~\eqref{eq:corCV2} is completed.

This concludes the proof of Corollary~\ref{corCV}.
\end{proof}

Another immediate consequence of \Cref{theoMain} is the first-order strong convergence of the scheme~\eqref{eq:schemeStrato} applied to the Stratonovich SDE~\eqref{eq:sdeStrato}, under appropriate assumptions on $G$ and $g$ to ensure that \Cref{assCoeff} is satisfied for the drift coefficient $\overline{G}=G+\frac12g'g$ given by~\eqref{eq:Gbar} and the diffusion coefficient $g$. As already mentioned, it suffices for instance to assume that the \Cref{assCoeff} is satisfied for $G$ and $g$, and to assume in addition that $g$ is of class $\mathcal{C}^3$, and that $g$ and $g'''$ are bounded.

\begin{corollary}\label{corStrato}
Consider the Stratonovich SDE~\eqref{eq:sde} under Assumptions~\ref{assInit}~and~\ref{assCoeff} for the coefficients $\overline{G}=G+\frac12g'g$ and $g$.

For all $T\in(0,\infty)$ and $p\in[1,\infty)$, there exists $C_p(T,x_0)\in(0,\infty)$ and $\tau_p\in(0,1)$, such that for any time-step size $\dt=T/N$, with $N\in\N$, which satisfies the condition $\dt\le \tau_p$, one has
\begin{equation}
\underset{0\le n\le N}\sup~\bigl(\E\left[|\mathbf{X}_n-\mathbf{X}(t_n)|^{2p}\right]\bigr)^{\frac{1}{2p}}\le C_p(T,x_0)\dt.
\end{equation}
\end{corollary}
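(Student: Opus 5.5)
The plan is to reduce Corollary~\ref{corStrato} directly to \Cref{theoMain}, applied to the It\^o formulation~\eqref{eq:sdeStratoIto} of the Stratonovich SDE. First, recall that $\mathbf{X}$ solves~\eqref{eq:sdeStrato} if and only if it solves the It\^o SDE~\eqref{eq:sdeStratoIto} with drift $\overline{G}=G+\frac12 g'g$ and diffusion $g$. By hypothesis, the pair $(\overline{G},g)$ satisfies \Cref{assCoeff}: $\overline{G}$ and $g$ are of class $\mathcal{C}^2$ with bounded first and second derivatives. The condition $\overline{G}(0)=g(0)=0$ holds automatically since $g(0)=0$, and \Cref{assInit} holds for $x_0$. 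Hence \Cref{propMomEx} and \Cref{theoMain} apply to~\eqref{eq:sdeStratoIto}. The drift and diffusion factorizations are given by $\overline{F}$ from~\eqref{eq:Fbar} and by $f$; the latter is unchanged because the diffusion coefficient is still $g$.

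The key algebraic step is to check that the scheme~\eqref{eq:scheme}, written for the coefficients $(\overline{G},g)$, coincides with the Stratonovich scheme~\eqref{eq:schemeStrato}. Replacing $F$ by $\overline{F}=F+\frac12 g'f$ in~\eqref{eq:scheme}, the exponent becomes
\[
F\dt+\tfrac12 g'f\dt+f\delta B_n+\tfrac12 (f'g)\bigl[\delta B_n^2-\dt\bigr]-\tfrac12 f^2\dt,
\]
with all functions evaluated at $\mathbf{X}_n$. Using the identity $g'f=f^2+f'g$, which follows from~\eqref{eq:facto}, the terms $\frac12 g'f\dt-\frac12 f'g\dt-\frac12 f^2\dt$ cancel. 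The exponent reduces to $F\dt+f\delta B_n+\frac12(f'g)\delta B_n^2$, which is exactly~\eqref{eq:schemeStrato}; this was already noted right after~\eqref{eq:scheme}. Consequently, the sequences produced by~\eqref{eq:schemeStrato} and by~\eqref{eq:scheme} applied to $(\overline{G},g)$ are identical almost surely, since they share the same initial value and the same Wiener increments.

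Applying \Cref{theoMain} to~\eqref{eq:sdeStratoIto} then yields the first-order estimate with constants $C_p(T,x_0)$ and threshold $\tau_p$, which depend on $\overline{G}$ and $g$ through the bounds on their derivatives. The only point that needs care, and which I expect to be the main (mild) obstacle, is making sure the factorization coefficient $\overline{F}$ used in~\eqref{eq:scheme} is the one defined by the integral formula $\int_0^1\overline{G}'(\theta x)\dd\theta$. I will check this by uniqueness of the factorization: for $x>0$ both expressions equal $\overline{G}(x)/x$, and at $x=0$ they agree by continuity. The value at $0$ is irrelevant in any case, because $\mathbf{X}_n=0$ makes the update vanish regardless of the exponent.
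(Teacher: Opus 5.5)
Your proposal is correct and follows the paper's argument: apply \Cref{theoMain} to the It\^o formulation~\eqref{eq:sdeStratoIto} with coefficients $(\overline{G},g)$, and use the identity $g'f=f^2+f'g$ to see that the scheme~\eqref{eq:scheme} with $\overline{F}$ in place of $F$ is exactly~\eqref{eq:schemeStrato}. One small correction: $\overline{G}(0)=0$ does not follow from $g(0)=0$ alone, since it also requires $G(0)=0$; however, it is part of the hypothesis that \Cref{assCoeff} holds for $(\overline{G},g)$, so the argument is unaffected.
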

A version of Corollary~\ref{corCV} can also be obtained for the scheme~\eqref{eq:schemeStrato} applied to the Stratonovich SDE~\eqref{eq:sdeStrato}, this is omitted.

\subsection{Numerical experiments}\label{sec:num}

In this subsection, we illustrate the main results of this article stated in Section~\ref{sec:main}.
 
For the It\^o SDE~\eqref{eq:sde}, we consider the first-order positivity preserving scheme~\eqref{eq:scheme} denoted by \hosI.  We illustrate its performance compared with the lower-order positivity preserving scheme~\eqref{eq:schemedemi} (denoted by \sI), and with the classical Euler--Maruyama scheme (denoted by {\upshape EM})
\[
X_{n+1}=X_{n}+G(X_n)\dt+g(X_n)\delta B_n,\quad n\in\{0,\ldots,N-1\},
\]
and the Milstein scheme (denoted by {\upshape Mil})
\[
X_{n+1}=X_{n}+G(X_n)\dt+g(X_n)\delta B_n+\frac12(g'g)(X_n)\bigl(\delta B_n^2-\dt\bigr),\quad n\in\{0,\ldots,N-1\}.
\]

We first illustrate \Cref{prop:posi}, with initial value $x_0=0.1$, $T=50$, and several choices of coefficients $G$ and $g$. We simulate $100$ realisations of the different numerical schemes and compute the proportion of realisations that remain positive at all times in Table~\ref{tablePosi}.
This table confirms the result stated in \Cref{prop:posi} thereby establishing a better performance of the numerical scheme~\eqref{eq:scheme} compared with the Euler--Maruyama and Milstein schemes.

\begin{table}[h]
\begin{center}
\begin{tabular}{|c | c |c | c| c| c|}
  \hline
  $G(x)$ & $g(x)$ &  \hosI & {\upshape Mil} & \sI & {\upshape EM} \\
  \specialrule{.1em}{.05em}{.05em}
  $x$ & $2.9\sin(x)$ & $100/100$ & $99/100$ & $100/100$ & $10/100$ \\
  \hline
  $x$ & $5\sin(x)$ & $100/100$ & $5/100$ & $100/100$ & $10/100$ \\
  \hline
  $x/(1+x^2)$ & $2\log(1+x)$ & $100/100$ & $56/100$ & $100/100$ & $2/100$ \\
  \hline
  $x$ & $4.3x$ & $100/100$ & $0/100$ & $100/100$ & $0/100$ \\
  \hline
\end{tabular}
\end{center}
\caption{Illustration of \Cref{prop:posi}: Proportion of positive realisations of the positivity preserving scheme~\eqref{eq:scheme}
(\hosI), the Milstein scheme ({\upshape Mil}), the positivity preserving scheme~\eqref{eq:schemedemi}
(\sI), and the Euler--Maruyama scheme ({\upshape EM}).}
\label{tablePosi}
\end{table}

A further illustration of \Cref{prop:posi} is given in Figure~\ref{figPosi}, where one realization for each numerical
scheme is displayed for the SDE~\eqref{eq:sde} with coefficients $G(x)=x$ and $g(x)=4.3x$.

% Subfigures 
\begin{figure}
\centering
\begin{subfigure}{.5\textwidth}
  \centering
  \includegraphics[width=1.\linewidth]{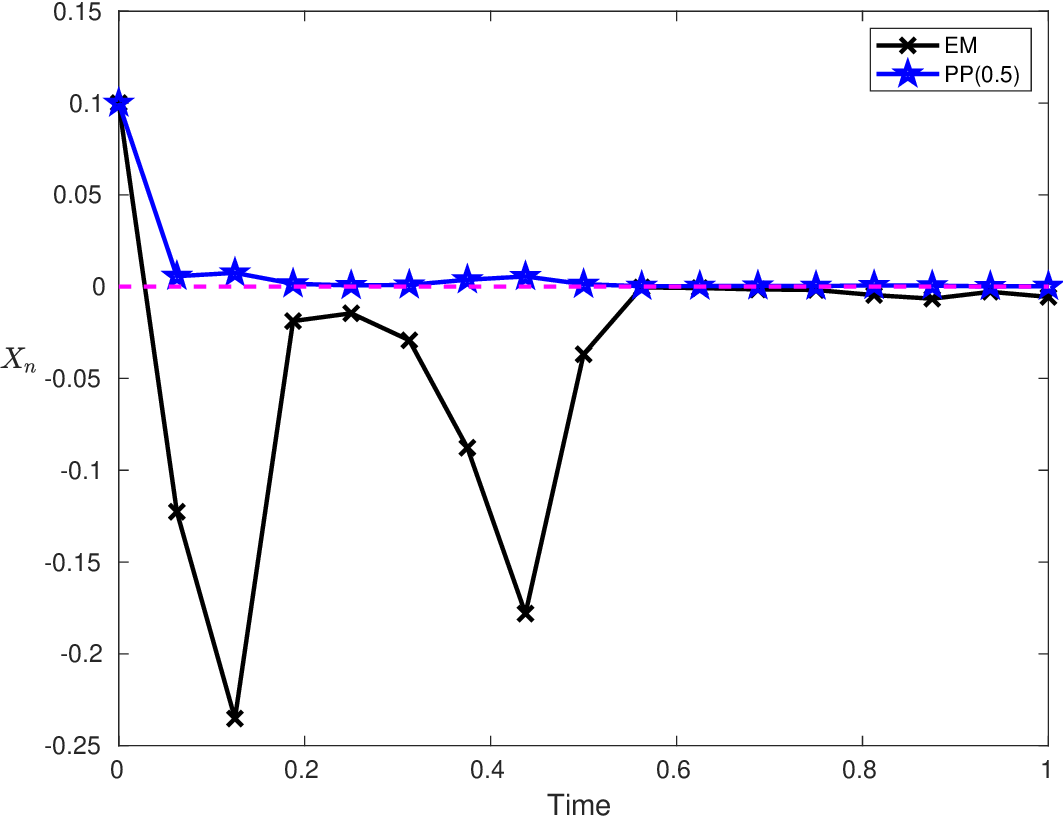}
  \caption{{\upshape EM} and \sI.}
  \label{figPosi1}
\end{subfigure}%
\begin{subfigure}{.5\textwidth}
  \centering
  \includegraphics[width=1.\linewidth]{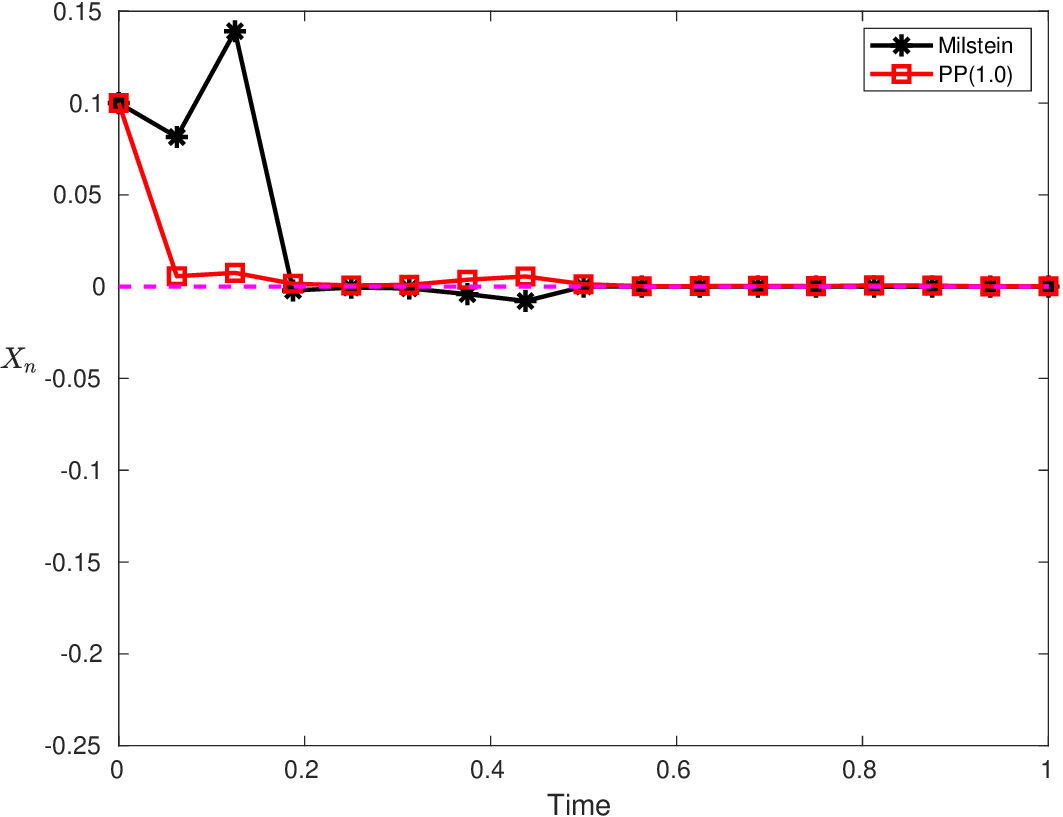}
  \caption{{\upshape Mil} and \hosI.}
  \label{figPosi2}
\end{subfigure}
\caption{Illustration of \Cref{prop:posi} for the SDE~\eqref{eq:sde} with $G(x)=x$ and $g(x)=4.3x$:
Evolution of the Euler--Maruyama scheme ({\upshape EM}), the Milstein scheme ({\upshape Mil})),
the positivity preserving scheme~\eqref{eq:schemedemi} (\sI) and the positivity preserving scheme~\eqref{eq:scheme} (\hosI).}
\label{figPosi}
\end{figure}

Next, we illustrate the first-order convergence rate of the proposed positivity preserving scheme~\eqref{eq:scheme} in the mean-square sense, i.e. when choosing $p=1$ in~\Cref{theoMain}.
For this purpose, we consider the It\^o SDE~\eqref{eq:sde} with the initial value $x_0=0.8$ and with the final time $T=1$.
The numerical schemes are applied with the time-step sizes $\dt=2^{-k}$ for $k\in\{6,7,\dots,14\}$.
The reference solution $X^{\text{ref}}$ is computed using the proposed first-order positivity preserving scheme~\eqref{eq:scheme} (\hosI) applied with the fine time-step size $\dt_{\text{ref}}=2^{-14}$.
The mean-square error is evaluated in the discrete supremum norm according to
\begin{equation*}
   \underset{0\leq n\leq N}\sup~\left( \E\left[|X_n-X_n^{\text{ref}}|^2\right] \right)^{1/2}.
\end{equation*}
The expectation is approximated by the Monte Carlo method averaging over $M_s=10^3$ independent sample paths.
This sample size was verified to be sufficient to have a negligible statistical error for the observation of the
strong rate of convergence. 

The results are reported in Figure~\ref{fig:ms}. In Figure~\ref{fig:msA} (left), the drift and diffusion coefficients are given by $G(x)=x$ and $g(x)=\sin(x)$. In Figure~\ref{fig:msB} (right), the drift and diffusion coefficients are given by $G(x)=\frac{x}{1+x^2}$, $g(x)=\ln(1+x)$. In both cases, \Cref{assInit} is satisfied. 
Figure~\ref{fig:ms} illustrates \Cref{theoMain}: we observe that the positivity preserving scheme \hosI~is a first-order method, like the Milstein scheme, whereas the positivity preserving scheme \sI~ and the Euler--Maruyama scheme converge with strong order $1/2$.

\begin{figure}[h]
\centering
\begin{subfigure}{.4\textwidth}
  \centering
  \includegraphics[width=\textwidth]{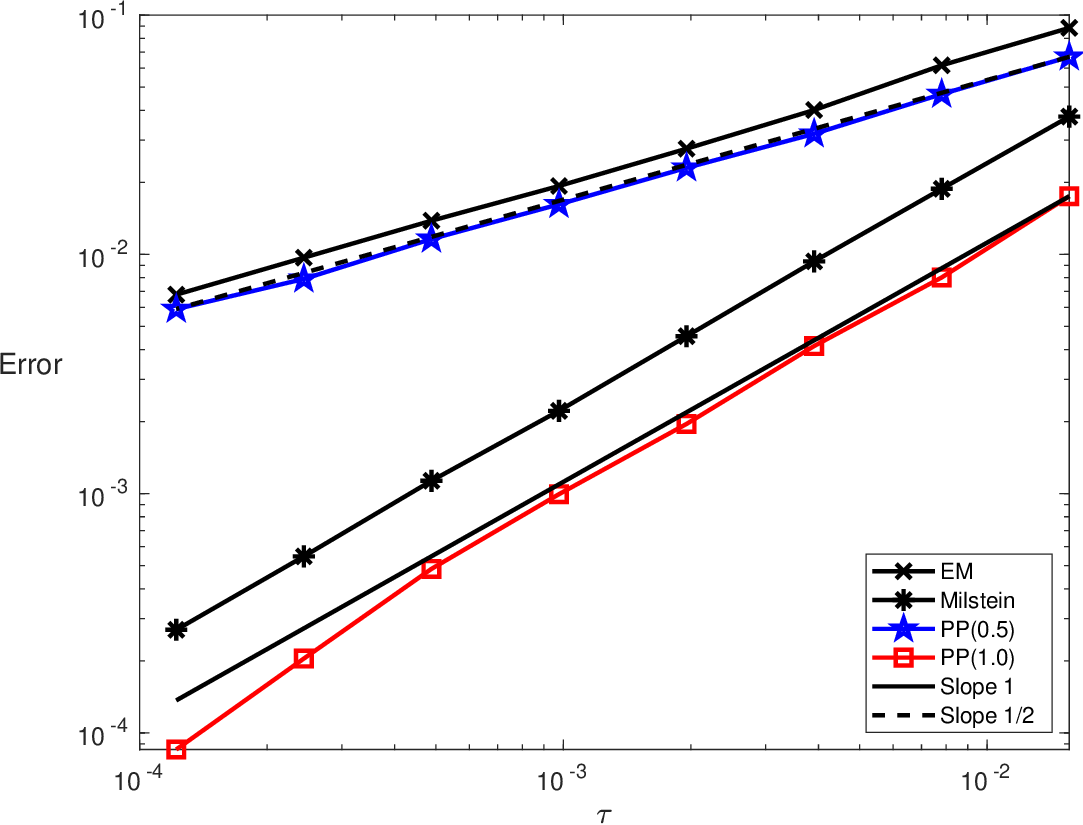}
  \caption{$G(x)=x$ and $g(x)=\sin(x)$.}\label{fig:msA}
\end{subfigure}%
\begin{subfigure}{.4\textwidth}
  \centering
  \includegraphics[width=\textwidth]{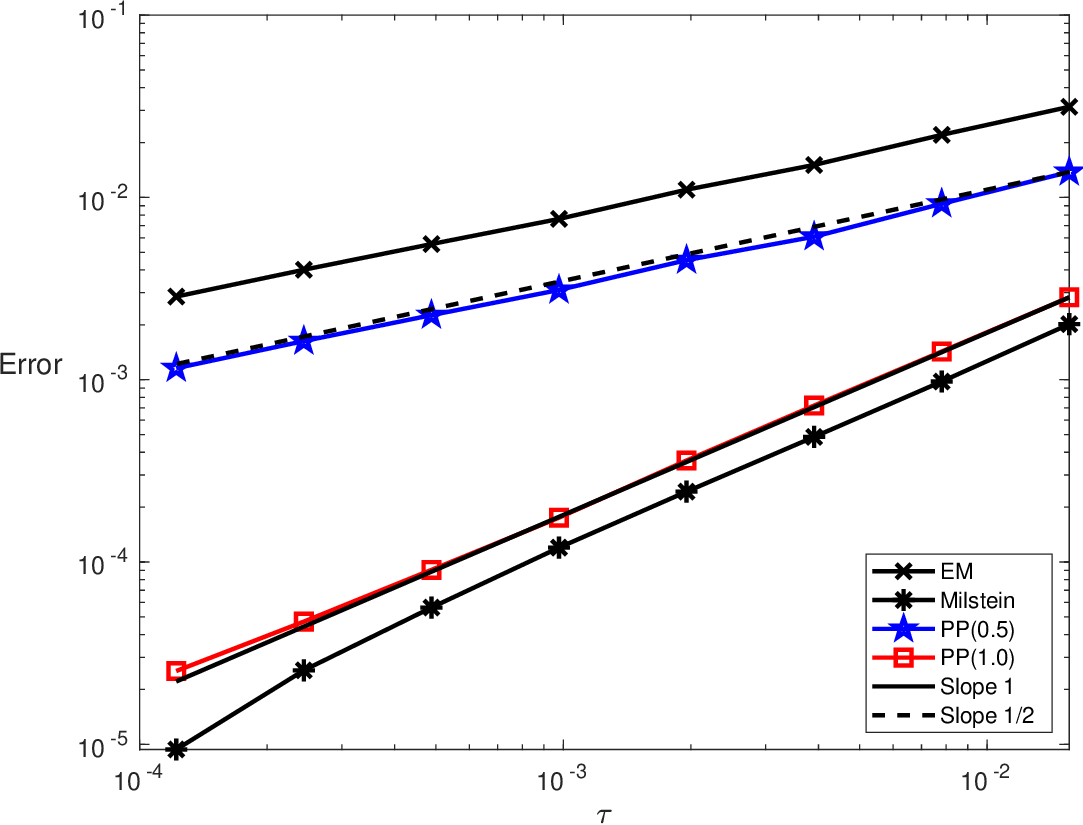}
  \caption{$G(x)=\frac{x}{1+x^2}$ and $g(x)=\ln(1+x)$. }\label{fig:msB}
\end{subfigure}%
\caption{Illustration of \Cref{theoMain}: mean-square convergence of the Euler--Maruyama scheme ({\upshape EM}), the Milstein scheme ({\upshape Mil}) and
the positivity preserving schemes~\eqref{eq:scheme}~and~\eqref{eq:schemedemi} (\hosI~and \sI) applied to the It\^o SDE~\eqref{eq:sde}.}
\label{fig:ms}
\end{figure}

Finally, the next numerical experiment illustrates \Cref{corStrato} on the convergence of the positivity preserving scheme~\eqref{eq:schemeStrato} applied to the Stratonovich SDE~\eqref{eq:sdeStrato}. The initial value is $x_0=1$, the final time is $T=1$, and the drift and diffusion coefficients are given by $G(x)=\sin(x)$ and $g(x)=\frac{x}{1+x^2}$. The time step-sizes $\dt=2^{-k}$ for $k\in\{6,7,\dots,14\}$ and the sample size $M_s=10^3$ for the Monte Carlo averaging are the same as above. The reference solution is computed using the proposed first-order positivity preserving scheme~\eqref{eq:schemeStrato} (\hosI) applied with the fine time-step size $\dt_{\text{ref}}=2^{-14}$. In Figure~\ref{fig:msStrato}, we consider the first-order positivity preserving scheme~\eqref{eq:schemeStrato} (\hosI), the positivity preserving scheme~\eqref{eq:schemedemiStrato} (\sI), the Euler--Heun scheme (denoted by {\upshape Heun})
\[
\left\lbrace
\begin{aligned}
&\widehat{\mathbf{X}}_{n+1}=\mathbf{X}_{n}+G(\mathbf{X}_n)\dt+g(\mathbf{X}_n)\delta B_n\\
&\mathbf{X}_{n+1}=\mathbf{X}_n+\frac12\left(G(\mathbf{X}_n)+G(\widehat{\mathbf{X}}_{n+1})\right)\dt+
\frac12\left(g(\mathbf{X}_n)+g(\widehat{\mathbf{X}}_{n+1})\right)\delta B_n
\end{aligned}
\right.
\]
and the Milstein scheme (denoted by {\upshape Mil})
\[
\mathbf{X}_{n+1}=\mathbf{X}_{n}+G(\mathbf{X}_n)\dt+g(\mathbf{X}_n)\delta B_n+\frac12(g'g)(\mathbf{X}_n)\delta B_n^2,\quad n\in\{0,\ldots,N-1\}.
\]

Figure~\ref{fig:msStrato} illustrates \Cref{corStrato}: we observe that the positivity preserving scheme \hosI~is a first-order method, like the Euler--Heun and Milstein schemes, whereas the positivity preserving scheme \sI~converge with strong order $1/2$.

\begin{figure}[h]
\centering
  \includegraphics[width=.5\textwidth]{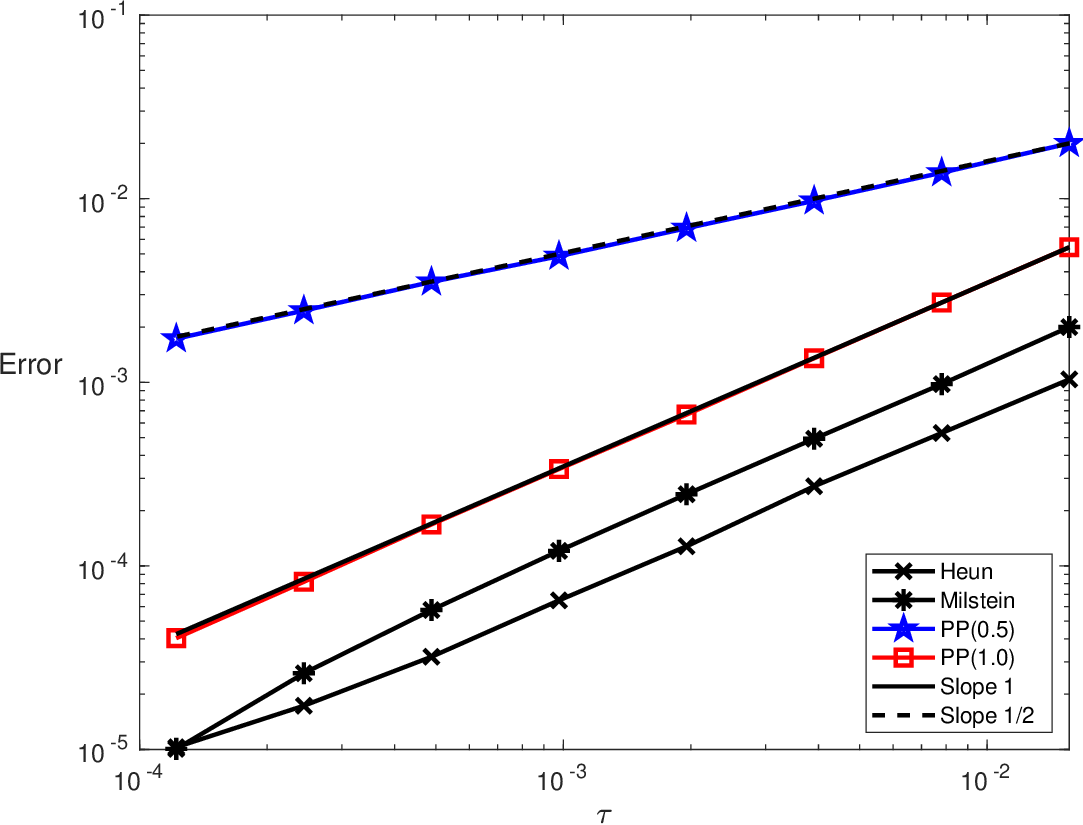}
\caption{Illustration of \Cref{corStrato}: mean-square convergence of the Euler--Heun scheme ({\upshape Heun}), the Milstein scheme ({\upshape Mil}) and
the positivity preserving schemes~\eqref{eq:schemeStrato}~and~\eqref{eq:schemedemiStrato} (\hosI~and \sI) applied to the Stratonovich SDE~\eqref{eq:sdeStrato}.}
\label{fig:msStrato}
\end{figure}

\section{Auxiliary results}\label{secAux}
In this section, we introduce an auxiliary continuous-time process (see~\eqref{eqAux} below) which coincides at the grid times $t_n=n\dt$ with the numerical solution given by~\eqref{eq:scheme}. \Cref{propMomNum} provides moment bounds for the numerical solution and the auxiliary process. Finally \Cref{propregExNum} provides moment bounds on the increments $X(t)-X(t_n)$ for the exact solution and the increments $\widetilde{X}(t)-\widetilde{X}(t_n)$ for the auxiliary process. 
These auxiliary results are applied in the proof of \Cref{theoMain} presented in \Cref{secProof}.

The auxiliary process $\bigl(\widetilde{X}(t)\bigr)_{t\in[0,T]}$ is defined as follows: for all $n\in\{0,\ldots,N-1\}$ and $t\in[t_n,t_{n+1})$, set
\begin{equation}\label{eqAux}
\left\lbrace
\begin{aligned}
\widetilde{X}(t)&=\exp\bigl(\widetilde{Y}(t)\bigr)X_n,\\
\widetilde{Y}(t)&=F(X_n)(t-t_n)+f(X_n)\bigl(B(t)-B(t_n)\bigr)\\
&+\frac12\bigl(f' g\bigr)(X_n)\bigl[(B(t)-B(t_n))^2-(t-t_n)\bigr]-\frac12 f(X_n)^2(t-t_n).
\end{aligned}
\right.
\end{equation}
Note that the auxiliary process $\bigl(\widetilde{X}(t)\bigr)_{t\in[0,T]}$ depends on the time-step size $\dt$
but this is omitted in the notation for simplicity. Observe that $\widetilde{X}_n=X_n$ for all $n\in\{0,\ldots,N\}$ owing to the definition~\eqref{eq:scheme} of the scheme, and that the process $\bigl(\widetilde{X}(t)\bigr)_{t\in[0,T]}$ has thus continuous trajectories on $[0,T]$. Moreover, under Assumptions~\ref{assInit}~and~\ref{assCoeff}, $\widetilde{X}(t)\ge 0$ for all $t\in[0,T]$ almost surely thanks to \Cref{prop:posi}.

We now show that the auxiliary process~\eqref{eqAux} is solution to a stochastic differential equation on each sub-interval $[t_n,t_{n+1}]$. First, observe that for all $t\in[t_n,t_{n+1}]$ one has almost surely
\[
(B(t)-B(t_n))^2-(t-t_n)=2\int_{t_n}^{t}\bigl[B(s)-B(t_n)\bigr]\dd B(s).
\]
Therefore, on each subinterval $[t_n,t_{n+1})$, the process $t\mapsto\widetilde{Y}(t)$ is solution to the stochastic differential equation
\[
\dd \widetilde{Y}(t)=\bigl[F(X_n)-\frac12f(X_n)^2\bigr]\dd t+\bigl[f(X_n)+\bigl(f' g\bigr)(X_n)\bigl(B(t)-B(t_n)\bigr)\bigr]\dd B(t).
\]
Second, applying It\^o's formula, for all $t\in[t_n,t_{n+1})$ one has
\begin{align*}
\dd \widetilde{X}(t)&=\widetilde{X}(t)\bigl[F(X_n)-\frac12 f(X_n)^2\bigr] \dd t \nonumber\\
&+\widetilde{X}(t)\bigl[f(X_n)+\bigl(f' g\bigr)(X_n)\bigl(B(t)-B(t_n)\bigr)\bigr]\dd B(t) \nonumber\\
&+\frac12 \widetilde{X}(t)\bigl[f(X_n)+\bigl(f' g\bigr)(X_n)\bigl(B(t)-B(t_n)\bigr)\bigr]^2 \dd t \nonumber,
\end{align*}
thus, on each sub-interval $[t_n,t_{n+1})$, the process $t\mapsto \widetilde{X}(t)$ is solution to the stochastic differential equation
\begin{align}
\dd \widetilde{X}(t)
&=\widetilde{X}(t)F(X_n)\dd t+\widetilde{X}(t)f(X_n)\dd B(t)\label{auxSDE}\\
&+\widetilde{X}(t)\bigl(f' g\bigr)(X_n)\bigl(B(t)-B(t_n)\bigr)\dd B(t) \nonumber\\
&+\widetilde{X}(t)\bigl(ff' g\bigr)(X_n)\bigl(B(t)-B(t_n)\bigr)\dd t\nonumber\\
&+\frac12\widetilde{X}(t)\bigl(f' g\bigr)^2(X_n)\bigl(B(t)-B(t_n)\bigr)^2\dd t,\nonumber
\end{align}
with initial value $\widetilde{X}(t_n)=X_n$.

We now provide moment bounds for the numerical solution $\bigl(X_n\bigr)_{0\le n\le N}$ to the scheme~\eqref{eq:scheme} and for the auxiliary process $\bigl(\widetilde{X}(t)\bigr)_{t\in[0,T]}$ defined by~\eqref{eqAux}.
\begin{proposition}\label[proposition]{propMomNum}
Let Assumptions~\ref{assInit} and~\ref{assCoeff} be satisfied.

For all $T\in(0,\infty)$ and $p\in[1,\infty)$, there exists $\dt_p\in(0,1)$ and $C_p(T,x_0)\in(0,\infty)$ such that
\begin{equation}\label{momentNum}
\underset{\dt\in(0,\dt_p)}\sup~\underset{0\le n\le N}\sup~\E\left[X_n^p\right]+\underset{\dt\in(0,\dt_p)}\sup~\underset{t\in[0,T]}\sup~\E\left[\widetilde{X}(t)^p\right]\le C_p(T,x_0).
\end{equation}
\end{proposition}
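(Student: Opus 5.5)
The plan is to exploit the multiplicative structure of the scheme~\eqref{eq:scheme}: $X_{n+1}=\exp(Y_n)X_n$, where
\[
Y_n=F(X_n)\dt+f(X_n)\delta B_n+\tfrac12(f'g)(X_n)\bigl[\delta B_n^2-\dt\bigr]-\tfrac12 f(X_n)^2\dt .
\]
We condition on $\mathcal{F}_{t_n}$ and get a one-step bound of the form $\E[e^{pY_n}\mid\mathcal{F}_{t_n}]\le 1+C_p\dt$, with $C_p$ independent of $X_n$ and $\dt$. Iterating this gives $\E[X_{n}^p]\le(1+C_p\dt)^n x_0^p\le e^{C_pT}x_0^p$.

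For the one-step bound, set $a=F(X_n)$, $b=f(X_n)$ and $c=\frac12(f'g)(X_n)$. These are bounded uniformly owing to~\eqref{eq:boundF}, \eqref{eq:boundf} and~\eqref{eq:boundf'g}, say $|a|,|b|,|c|\le K$. Write $\delta B_n=\sqrt{\dt}\,Z$ with $Z\sim\mathcal{N}(0,1)$ independent of $\mathcal{F}_{t_n}$. We then compute explicitly
\[
\E\bigl[\exp\bigl(p b\sqrt{\dt}Z+pc\dt Z^2\bigr)\bigr]=(1-2pc\dt)^{-1/2}\exp\Bigl(\frac{p^2b^2\dt}{2(1-2pc\dt)}\Bigr),
\]
which is finite as soon as $2pK\dt<1$. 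This is where the restriction $\dt<\dt_p$ comes from; for instance $\dt_p=\min(1,(4pK)^{-1})/2$ works. Multiplying by the deterministic factor $\exp(p(a-c-\frac12b^2)\dt)$ and expanding each factor for small $\dt$ gives
\[
(1-2pc\dt)^{-1/2}=1+pc\dt+O(\dt^2),\qquad \exp\Bigl(\frac{p^2b^2\dt}{2(1-2pc\dt)}\Bigr)=1+\tfrac{p^2b^2}{2}\dt+O(\dt^2),
\]
and these combine to $1+C_p\dt$. All constants depend only on $p$ and $K$, because $\dt\le\dt_p$ keeps $1-2pc\dt\ge 1/2$. The only delicate point is the uniformity in $X_n$, and it is guaranteed precisely by the boundedness of $f'g$ in~\eqref{eq:boundf'g}, which is why that estimate was recorded. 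I expect this step, namely checking integrability of the $\delta B_n^2$ term in the exponential and obtaining the $1+C\dt$ bound rather than a constant bigger than $1+O(\dt)$, to be the main obstacle; everything else is bookkeeping.

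For the auxiliary process~\eqref{eqAux} on $[t_n,t_{n+1})$, I would repeat the same computation with $\dt$ replaced by $s=t-t_n\in[0,\dt)$. This gives $\E[\widetilde X(t)^p\mid\mathcal{F}_{t_n}]\le(1+C_ps)X_n^p\le(1+C_p\dt)X_n^p$, since the Gaussian formula holds for any $s\le\dt\le\dt_p$. Combining with the bound on $\E[X_n^p]$ yields $\sup_{t\in[0,T]}\E[\widetilde X(t)^p]\le(1+C_p)e^{C_pT}x_0^p$, and~\eqref{momentNum} follows. Nonnegativity of $X_n$ from \Cref{prop:posi} lets us work with $X_n^p$ for non-integer $p$ without absolute values.
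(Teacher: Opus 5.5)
Your proposal is correct and follows essentially the same route as the paper. Both condition on $\mathcal{F}_{t_n}$, evaluate $\E[\exp(a\gamma+b\gamma^2)]$ exactly, and use the boundedness of $F$, $f$, $f'g$ together with $\dt\le\dt_p$ (which keeps $1-p(f'g)(X_n)\dt\ge 1/2$) to get a one-step factor uniform in $X_n$; this factor is then iterated over $n$, and the same computation on each $[t_n,t_{n+1})$ handles $\widetilde{X}$. The only difference is cosmetic: you obtain $1+C_p\dt$ by Taylor expansion, whereas the paper writes $e^{C_p\dt}$ via the bound $(1-z)^{-1/2}\le e^{\mu z}$ on $(0,1/2]$.
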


\begin{proof}[Proof of \Cref{propMomNum}]
Let us first note the following result: if $\gamma\sim\mathcal{N}(0,1)$ is a standard real-valued Gaussian random variable, for all $a\in\R$ and $b\in(-\infty,1/2)$, one has
\begin{equation}\label{eq:auxgaussian}
\E\left[\exp\bigl(a\gamma+b\gamma^2\bigr)\right]=\frac{1}{\sqrt{1-2b}}\exp\left(\frac{a^2}{2(1-2b)}\right).
\end{equation}

For all $p\in[1,\infty)$, define
\[
\dt_p=\frac{1}{1+2p\|f'g\|_\infty}.
\]

Let $n\in\{0,\ldots,N-1\}$. Owing to the tower property of conditional expectation, one has
\begin{align*}
\E\left[X_{n+1}^p\right]&=\E\left[\E\left[X_{n+1}^p|\mathcal{F}_{t_n}\right]\right]=\E\left[I_n\exp\Bigl(p\tau\bigl(F(X_n)-\frac12(g'f)(X_n)\Bigr)X_n^p\right]\\
&\le \E\left[I_n\exp\Bigl(p\bigl(\|F\|_\infty+\frac12\|g'f\|_\infty\bigr)\tau\Bigr)\right],
\end{align*}
where the random variable $I_n$ is defined as
\[
I_n=\E\left[\exp\Bigl(pf(X_n)\delta B_n+\frac{p}{2}(f'g)(X_n)\delta B_n^2 \Bigr)|\mathcal{F}_{t_n}\right].
\]
The Wiener increment $\delta B_n$ is independent of $\mathcal{F}_{t_n}$, therefore applying the identity~\eqref{eq:auxgaussian} with $a=pf(X_n)\sqrt{\dt}$ and $b=\frac{p}{2}(f'g)(X_n)\dt$, if $\dt\le \tau_p$ one has $b\le 1/4$, and one obtains almost surely
\[
I_n=\frac{1}{\sqrt{1-p(f'g)(X_n)\dt}}\exp\left(\frac{p^2f(X_n)^2\dt}{2(1-p(f'g)(X_n)\dt)}\right).
\]
In addition, if $\dt<\dt_p$ one has the lower bound $1-p(f'g)(X_n)\dt\ge 1/2$, and one thus obtains the upper bound
\[
I_n\le \frac{1}{\sqrt{1-p\|f'g\|_\infty\dt}}\exp\bigl(p^2\|f\|_\infty^2\dt\bigr).
\]
Finally, setting
\[
\mu=\underset{z\in(0,1/2]}\sup~\left(-\frac{\log(1-z)}{2z}\right)\in(0,\infty).
\]
one has
\[
\frac{1}{\sqrt{1-z}}\le e^{\mu z},\quad \forall~z\in(0,1/2].
\]

Set
\[
C_p=p\mu\|f'g\|_\infty+p^2\|f\|_\infty^2+p\bigl(\|F\|_\infty+\frac12\|g'f\|_\infty\bigr).
\]
Then, for all $\dt\in(0,\tau_p)$ one obtains the upper bound
\[
I_n\le \exp\Bigl(\bigl(p\mu\|f'g\|_\infty+p^2\|f\|_\infty^2\bigr)\dt\Bigr)
\]
and for all $n\in\{0,\ldots,N-1\}$ one has
\[
\E\left[X_{n+1}^p\right]\le \exp\bigl(C_p\dt\bigr)\E\left[X_n^p\right].
\]
Letting $C_p(T)=\exp(C_pT)$, it is then straightforward to obtain the upper bound
\[
\underset{0\le n\le N}\sup~\E\left[\widetilde{X}(t_n)^p\right]=\underset{0\le n\le N}\sup~\E\left[X_n^p\right]\le C_p(T)x_0^p\le C_p(T,x_0).
\]
This concludes the proof of the moment bounds for the numerical solution $\bigl(X_n\bigr)_{0\le n\le N}$. It remains to prove them for the auxiliary process $\bigl(\widetilde{X}(t)\bigr)_{t\in[0,T]}$. The arguments are similar to those applied above, they are provided for completeness.

Let $n\in\{0,\ldots,N-1\}$ and $t\in(t_n,t_{n+1})$. Owing to the tower property of conditional expectation, one has
\begin{align*}
\E\left[\widetilde{X}(t)^p\right]&=\E\left[\E\left[\widetilde{X}(t)^p|\mathcal{F}_{t_n}\right]\right]=\E\left[I_n(t)\exp\Bigl(p\bigl(F(X_n)-\frac12(g'f)(X_n)(t-t_n)\Bigr)X_n^p\right]\\
&\le \E\left[I_n(t)\exp\Bigl(p\bigl(\|F\|_\infty+\frac12\|g'f\|_\infty\bigr)\tau\Bigr)\right],
\end{align*}
where the random variable $I_n(t)$ is defined as
\[
I_n(t)=\E\left[\exp\Bigl(pf(X_n)\bigl(B(t)-B(t_n)\bigr)+\frac{p}{2}(f'g)(X_n)\bigl(B(t)-B(t_n)\bigr)^2 \Bigr)|\mathcal{F}_{t_n}\right].
\]
The Wiener increment $B(t)-B(t_n)$ is independent of $\mathcal{F}_{t_n}$, therefore applying the identity~\eqref{eq:auxgaussian} with $a=pf(X_n)\sqrt{t-t_n}$ and $b=\frac{p}{2}(f'g)(X_n)(t-t_n)$, if $t-t_n\le \dt\le \tau_p$ one has $b\le 1/4$, and one obtains almost surely
\begin{align*}
I_n(t)&=\frac{1}{\sqrt{1-p(f'g)(X_n)(t-t_n)}}\exp\left(\frac{p^2f(X_n)^2(t-t_n)}{2(1-p(f'g)(X_n)(t-t_n))}\right)\\
&\le \exp\left(\mu p(f'g)(X_n)(t-t_n)\right)\exp\left(p^2f(X_n)^2(t-t_n)\right)\\
&\le \exp\left(\bigl(p\mu\|f'g\|_\infty+p^2\|f\|_\infty^2\bigr)\tau\right).
\end{align*}
Therefore, for all $n\in\{0,\ldots,N-1\}$ and all $t\in(t_n,t_{n+1})$ one has
\[
\E\left[\widetilde{X}(t)^p\right]\le \exp(C_p\dt)\E[X_n^p].
\]
Applying the moment bounds obtained previously for the numerical solution then provides the moment bounds for the auxiliary process.

The proof of \Cref{propMomNum} is thus completed.
\end{proof}

To conclude this section, we provide moment bounds on the increments of the exact and numerical solution.

\begin{proposition}\label[proposition]{propregExNum}
Let Assumptions~\ref{assInit}~and~\ref{assCoeff} be satisfied.

Let $\bigl(X(t)\bigr)_{t\in[0,T]}$ be the solution to the It\^o SDE~\eqref{eq:sde} and $\bigl(\widetilde{X}(t)\bigr)_{t\in[0,T]}$ be the auxiliary process defined by~\eqref{eqAux}.

For all $T\in(0,\infty)$, and any $p\in[1,\infty)$, there exists $C_p(T,x_0)\in(0,\infty)$ and $\dt_p\in(0,1)$, such that for all $N\in\N$, if the time-step size satisfies $\dt=T/N\le \tau_p$ for all $n\in\{0,\ldots,N-1\}$ and all $t\in([)t_n,t_{n+1})$ one has
\begin{equation}\label{regExNum}
\E\left[|{X}(t)-{X}(t_n)|^p\right]+\E\left[|\widetilde{X}(t)-\widetilde{X}(t_n)|^p\right]\le C_p(T,x_0)\dt^{\frac{p}{2}}.
\end{equation}
\end{proposition}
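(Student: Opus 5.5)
It is enough to treat $p\ge 2$; the case $p\in[1,2)$ then follows from Jensen's (or Lyapunov's) inequality. I will bound the two increments separately, using the integral formulations together with the moment bounds of \Cref{propMomEx} and \Cref{propMomNum}.

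\emph{Exact solution.} For $t\in[t_n,t_{n+1})$, write
\[
X(t)-X(t_n)=\int_{t_n}^{t}G(X(s))\dd s+\int_{t_n}^{t}g(X(s))\dd B(s).
\]
For the drift integral, apply H\"older's inequality and the linear growth bound~\eqref{eq:Gg-growth}:
\[
\E\Bigl[\Bigl|\int_{t_n}^t G(X(s))\dd s\Bigr|^p\Bigr]\le (t-t_n)^{p-1}\|G'\|_\infty^p\int_{t_n}^t\E[X(s)^p]\dd s\le C\dt^p .
\]
For the stochastic integral, apply the BDG inequality~\eqref{eq:BDG} on $[t_n,t]$ (or conditionally on $\mathcal{F}_{t_n}$), followed by H\"older's inequality on the time integral:
\[
\E\Bigl[\Bigl|\int_{t_n}^t g(X(s))\dd B(s)\Bigr|^p\Bigr]\le C_p(t-t_n)^{\frac p2-1}\int_{t_n}^t\E[|g(X(s))|^p]\dd s\le C\dt^{p/2}.
\]
Combining these with \eqref{momentExact} gives the first half of~\eqref{regExNum}.

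\emph{Auxiliary process.} I would argue directly from the explicit formula~\eqref{eqAux} rather than from the SDE~\eqref{auxSDE}. Write
\[
\widetilde{X}(t)-\widetilde{X}(t_n)=\bigl(e^{\widetilde Y(t)}-1\bigr)X_n,
\]
and use the elementary inequality $|e^y-1|\le |y|\,(e^{y}+1)$, which holds since $|e^y-1|\le|y|\max(e^y,1)$. Cauchy--Schwarz then gives
\[
\E\bigl[|\widetilde X(t)-\widetilde X(t_n)|^p\bigr]\le \bigl(\E[X_n^{2p}|\widetilde Y(t)|^{2p}]\bigr)^{1/2}\,\bigl(\E[(e^{\widetilde Y(t)}+1)^{2p}]\bigr)^{1/2}.
\]

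The first factor is controlled by conditioning on $\mathcal{F}_{t_n}$. By the bounds \eqref{eq:boundF}, \eqref{eq:boundf} and \eqref{eq:boundf'g}, the coefficients $F(X_n)$, $f(X_n)$, $(f'g)(X_n)$ and $f(X_n)^2$ are bounded uniformly. Moreover $B(t)-B(t_n)$ is independent of $\mathcal{F}_{t_n}$, with Gaussian moments of order $(t-t_n)^{1/2}$. Hence
\[
\E\bigl[|\widetilde Y(t)|^{2p}\,\big|\,\mathcal{F}_{t_n}\bigr]\le C_p\dt^{p},
\]
and therefore $\E[X_n^{2p}|\widetilde Y(t)|^{2p}]\le C_p\dt^p\,\E[X_n^{2p}]\le C\dt^p$ by \Cref{propMomNum}.

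The second factor is the main obstacle: it requires exponential moments of $\widetilde Y(t)$, which contains the quadratic term $\frac12(f'g)(X_n)(B(t)-B(t_n))^2$. I would use the same argument as in the proof of \Cref{propMomNum}. Conditioning on $\mathcal{F}_{t_n}$ and applying the Gaussian identity~\eqref{eq:auxgaussian} with $a=2pf(X_n)\sqrt{t-t_n}$ and $b=p(f'g)(X_n)(t-t_n)$ gives a deterministic bound on $\E[e^{2p\widetilde Y(t)}\mid\mathcal{F}_{t_n}]$, uniform in $X_n$. This requires $b\le 1/4$, which holds as soon as $\dt\le\tau_{2p}$; this is why the statement imposes a restriction on the step size, and I would take it to be the $\tau_{2p}$ of \Cref{propMomNum}.

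Multiplying the two factors yields $\E[|\widetilde X(t)-\widetilde X(t_n)|^p]\le C\dt^{p/2}$, which completes~\eqref{regExNum}.
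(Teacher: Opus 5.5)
Your proposal is correct. For the exact solution it is the paper's argument: integral form, H\"older, BDG, the growth bound \eqref{eq:Gg-growth} and \eqref{momentExact}. The paper simply works with exponent $2p$ directly instead of reducing $p\in[1,2)$ to $p=2$ via Jensen.

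For the auxiliary process you take a genuinely different route. The paper does not use the closed form \eqref{eqAux} here. It uses the It\^o SDE \eqref{auxSDE} satisfied by $\widetilde X$ on $[t_n,t_{n+1})$ and splits $\widetilde X(t)-\widetilde X(t_n)$ into the corresponding five $\dd s$- and $\dd B$-integrals. It then bounds each integral by H\"older, BDG, Cauchy--Schwarz against powers of $B(s)-B(t_n)$, and the moment bounds \eqref{momentNum} on $\widetilde X(s)$. You instead write the increment as $\bigl(e^{\widetilde Y(t)}-1\bigr)X_n$ and use $|e^y-1|\le |y|(e^y+1)$. You then control the two Cauchy--Schwarz factors by conditional Gaussian moments of $\widetilde Y(t)$ and by the identity \eqref{eq:auxgaussian}. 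Your check that $b<1/4$ when $\dt\le\tau_{2p}$ is right. The resulting conditional bound is indeed uniform in $X_n$, because $F$, $f$ and $f'g$ are bounded.

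\emph{What your route buys.} It exploits the explicit exponential structure of the scheme and avoids It\^o's formula and the five-term bookkeeping. It also makes visible where the step-size restriction comes from: the quadratic term $\frac12(f'g)(X_n)\bigl(B(t)-B(t_n)\bigr)^2$ in the exponent.

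\emph{What the paper's route buys.} It treats $\widetilde X$ as an ordinary It\^o process whose coefficients are proportional to $\widetilde X$, so it only consumes the already established bounds \eqref{momentNum}. It also uses the same representation \eqref{auxSDE} that drives the error decomposition in Section~\ref{secProof}, which keeps the whole analysis in one framework.

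Neither route is more economical in the step-size condition. For exponent $q$ in \eqref{regExNum}, both need moments of order $2q$. You need them through $\E[X_n^{2q}]$ and $\E[e^{2q\widetilde Y(t)}]$. The paper needs them through $\sup_s\E[\widetilde X(s)^{2q}]$, which comes from the same Gaussian computation as in \Cref{propMomNum}.
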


\begin{proof}[Proof of \Cref{propregExNum}]
First, let us prove the temporal regularity property for the exact solution $X(t)$.
Let $n\in\{0,\ldots,N-1\}$ and $t\in(t_n,t_{n+1})$. Writing the integral formulation of the solution, and applying the 
H\"older inequality and the BDG inequality~\eqref{eq:BDG}, one has
\begin{align*}
\E\left[|{X}(t)-{X}(t_n)|^{2p}\right]&\le C_p\E\left[ \left| \int_{t_n}^t G(X(s))\dd s\right|^{2p}\right]+C_p\E\left[\left|\int_{t_n}^t g(X(s)))\dd B(s) \right|^{2p} \right]\\
&\leq C_p(T)(t-t_n)^{2p-1}\int_{t_n}^t\E\left[|G(X(s))|^{2p}\right]\dd s \\
&+C_p(T)(t-t_n)^{p-1}\int_{t_n}^t\E\left[|g(X(s))|^{2p}\right]\dd s.
\end{align*}
Applying the property~\eqref{eq:Gg-growth} of the coefficients $G$ and $g$, and the moment bounds~\eqref{momentExact} from \Cref{propMomEx}, one then obtains
\begin{align*}
\E\left[|{X}(t)-{X}(t_n)|^{2p}\right]
&\le C_p(T)\left((t-t_n)^{2p}\norm{G'}^{2p}_{\infty} +(t-t_n)^{p}\norm{g'}^{2p}_{\infty}\right)\underset{s\in[0,T]}\sup~\E\left[|X(s)|^{2p}\right]\\
&\le C_p(T,x_0)(t-t_n)^p\\
&\le C_p(T,x_0)\dt^p.
\end{align*}
The proof of the temporal regularity property~\eqref{regExNum} is thus completed for the exact solution. It remains to establish the result for the auxiliary process $\widetilde{X}$.

Let $n\in\{0,\ldots,N-1\}$ and $t\in[t_n,t_{n+1})$. On the interval $[t_n,t_{n+1}]$, the auxiliary process $\widetilde{X}$ is solution to the stochastic differential equation~\eqref{auxSDE}, therefore one obtains
\begin{align*}
\E\left[|\widetilde{X}(t)-\widetilde{X}(t_n)|^{2p}\right]&\le C_p\E\left[ \left|\int_{t_n}^t \widetilde{X}(s)F(X_n)\dd s\right|^{2p}\right]\\
&+C_p\E\left[\left|\int_{t_n}^t \widetilde{X}(s)f(X_n)\dd B(s)\right|^{2p}\right] \\
&+C_p\E\left[ \left|\int_{t_n}^t \widetilde{X}(s)(f'g)(X_n)(B(s)-B(t_n))\dd B(s)\right|^{2p}\right]\\
&+C_p\E\left[ \left|\int_{t_n}^t \widetilde{X}(s)(ff'g)(X_n)(B(s)-B(t_n))\dd s\right|^{2p}\right]\\
&+C_p\E\left[ \left|\int_{t_n}^t \widetilde{X}(s)(f'g)(X_n)^2(B(s)-B(t_n))^2\dd s\right|^{2p}\right].
\end{align*}
The mappings $F$, $f$ and $f'g$ are bounded, see~\eqref{eq:boundF},~\eqref{eq:boundf} and~\eqref{eq:boundf'g} in Section~\ref{secAss}. Applying the H\"older inequality, the BDG inequality~\eqref{eq:BDG}, and the moment bounds~\eqref{momentNum} from \Cref{propMomNum} on the auxiliary process $\widetilde{X}$ (which requires to impose a condition $\dt\le \tau_p$ on the time-step size), one obtains the upper bounds
\begin{align*}
\E\left[|\widetilde{X}(t)-\widetilde{X}(t_n)|^{2p}\right]&\le C_p(T)(t-t_n)^{2p-1}\int_{t_n}^t\E[|\widetilde{X}(s)F(X_n)|^{2p}]\dd s\\
&+C_p(T)(t-t_n)^{p-1}\int_{t_n}^t \E[|\widetilde{X}(s)f(X_n)|^{2p}]\dd s \\
&+C_p(T)(t-t_n)^{p-1}\int_{t_n}^t \E[|\widetilde{X}(s)(f'g)(X_n)(B(s)-B(t_n))|^{2p}]\dd s\\
&+C_p(T)(t-t_n)^{2p-1}\int_{t_n}^t \E[|\widetilde{X}(s)(ff'g)(X_n)(B(s)-B(t_n))|^{2p}]\dd s\\
&+C_p(T)(t-t_n)^{2p-1}\int_{t_n}^t \E[|\widetilde{X}(s)(f'g)(X_n)^2(B(s)-B(t_n))^2|^{2p}]\dd s\\
&\le C_p(T)(t-t_n)^{2p}\|F\|_\infty^{2p}\underset{s\in[0,T]}\sup~\E[|\widetilde{X}(s)|^{2p}]\\
&+C_p(T)(t-t_n)^{p}\|f\|_\infty^{2p}\underset{s\in[0,T]}\sup~\E[|\widetilde{X}(s)|^{2p}]\\
&+C_p(T)(t-t_n)^{2p}\|f'g\|_\infty^{2p}\underset{s\in[0,T]}\sup~\bigl(\E[|\widetilde{X}(s)|^{4p}]\bigr)^{\frac12}\\
&+C_p(T)(t-t_n)^{3p}\|f\|_\infty^{2p}\|f'g\|_\infty^{2p}\underset{s\in[0,T]}\sup~\bigl(\E[|\widetilde{X}(s)|^{2p}]\bigr)^{\frac12}\\
&+C_p(T)(t-t_n)^{4p}\|f'g\|_\infty^{4p}\underset{s\in[0,T]}\sup~\bigl(\E[|\widetilde{X}(s)|^{2p}]\bigr)^{\frac12}\\
&\le C_p(T,x_0)(t-t_n)^{p}\\
&\le C_p(T,x_0)\dt^{p}.
\end{align*}
The proof of the temporal regularity property~\eqref{regExNum} is thus completed for the auxiliary process.

This concludes the proof of \Cref{regExNum}.
\end{proof}

\section{Proof of \Cref{theoMain}}\label{secProof}

This section is devoted to the proof of the first-order convergence of the positivity preserving scheme~\eqref{eq:scheme} to the solution of the It\^o SDE~\eqref{eq:sde} stated in \Cref{theoMain}.
We first provide a decomposition of the error of the numerical scheme in \Cref{secDec}. \Cref{secAuxErr} provides auxiliary error bounds.
Finally \Cref{secOrder} provides the proof of strong convergence of the scheme, first at order $1/2$ and then at order $1$.

\subsection{Decomposition of the error}\label{secDec}
The first step in the proof of \Cref{theoMain} is to provide a decomposition of the error
\[
e_n=X_n-X(t_n)
\]
at time $t_n$, for all $n\in\{1,\ldots,N\}$,
between the exact solution $X(t_n)$ to the It\^o SDE~\eqref{eq:sde} and the numerical solution $X_n$ given by~\eqref{eq:scheme}.

One the one hand, consider the exact solution. For all $n\in\{1,\ldots,N\}$, one has
\begin{align*}
X(t_n)-x_0&=\int_{0}^{t_n}G(X(t))\dd t+\int_{0}^{t_n}g(X(t))\dd B(t)\\
&=\sum_{k=0}^{n-1}\int_{t_k}^{t_{k+1}}G(X(t))\dd t+\sum_{k=0}^{n-1}\int_{t_k}^{t_{k+1}}g(X(t))\dd B(t).
\end{align*}
Define the auxiliary error terms $\varepsilon_n^{(1,1)}$ and $\varepsilon_n^{(1,2)}$ as
\begin{align}
\varepsilon_n^{(1,1)}&=\sum_{k=0}^{n-1}\int_{t_k}^{t_{k+1}}\Bigl(G(X(t))-G(X(t_k))\Bigr)\dd t\label{err11}\\
\varepsilon_n^{(1,2)}&=\sum_{k=0}^{n-1}\int_{t_k}^{t_{k+1}}\Bigl(g(X(t))-g(X(t_k))-\bigl(g'g\bigr)(X(t_k))\bigl[B(t)-B(t_k)\bigr]\Bigr)\dd B(t)\label{err12}.
\end{align}
One then obtains for all $n\in\{1,\ldots,N\}$
\begin{align}\label{eq:decompEx}
X(t_n)-x_0&=\sum_{k=0}^{n-1}\int_{t_k}^{t_{k+1}}G(X(t_k))\dd t+\varepsilon_n^{(1,1)}\\
&+\sum_{k=0}^{n-1}\int_{t_k}^{t_{k+1}}\Bigl(g(X(t_k))+\bigl(g'g\bigr)(X(t_k))\bigl[B(t)-B(t_k)\bigr]\Bigr)\dd B(t)+\varepsilon_n^{(1,2)}.\nonumber
\end{align}
On the other hand, consider the numerical scheme~\eqref{eq:scheme} and the associated auxiliary process $\widetilde{X}$ defined by~\eqref{eqAux}. Applying a telescoping sum argument and recalling that on each interval $[t_k,t_{k+1}]$ the auxiliary process $\widetilde{X}$ is the solution to the auxiliary SDE~\eqref{auxSDE},
for all $n\in\{1,\ldots,N\}$, one has
\begin{align*}
X_n-x_0&=\sum_{k=0}^{n-1}\Bigl(X_{k+1}-X_{k}\Bigr)\\
&=\sum_{k=0}^{n-1}\Bigl(\widetilde{X}(t_{k+1})-\widetilde{X}(t_k)\Bigr)\\
&=\sum_{k=0}^{n-1}\int_{t_k}^{t_{k+1}}F(X_k)\widetilde{X}(t)\dd t+\sum_{k=0}^{n-1}\int_{t_k}^{t_{k+1}}f(X_k)\widetilde{X}(t)\dd B(t)\\
&+\sum_{k=0}^{n-1}\int_{t_k}^{t_{k+1}}\bigl(f'g\bigr)(X_k)\widetilde{X}(t)\bigl[B(t)-B(t_k)\bigr]\dd B(t)\\
&+\sum_{k=0}^{n-1}\int_{t_k}^{t_{k+1}}\bigl(ff'g\bigr)(X_k)\widetilde{X}(t)\bigl[B(t)-B(t_k)\bigr]\dd t\\
&+\frac12\sum_{k=0}^{n-1}\int_{t_k}^{t_{k+1}}\bigl(f'g\bigr)^2(X_k)\widetilde{X}(t)\bigl[B(t)-B(t_k)\bigr]^2\dd t.
\end{align*}
Define the auxiliary error terms $\varepsilon_n^{(2,j)}$ with $j=1,\ldots,5$ as
\begin{align}
\varepsilon_n^{(2,1)}&=\sum_{k=0}^{n-1}\int_{t_k}^{t_{k+1}}F(X_k)\Bigl(\widetilde{X}(t)-\widetilde{X}(t_k)\Bigr)\dd t\label{err21}\\
\varepsilon_n^{(2,2)}&=\sum_{k=0}^{n-1}\int_{t_k}^{t_{k+1}}f(X_k)\Bigl(\widetilde{X}(t)-\widetilde{X}(t_k)-g(X_k)\bigl[B(t)-B(t_k)\bigr]\Bigr)\dd B(t)\label{err22}\\
\varepsilon_n^{(2,3)}&=\sum_{k=0}^{n-1}\int_{t_k}^{t_{k+1}}\bigl(f'g\bigr)(X_k)\Bigl(\widetilde{X}(t)-\widetilde{X}(t_k)\Bigr)\bigl[B(t)-B(t_k)\bigr]\dd B(t)\label{err23}\\
\varepsilon_n^{(2,4)}&=\sum_{k=0}^{n-1}\int_{t_k}^{t_{k+1}}\bigl(ff'g\bigr)(X_k)\widetilde{X}(t)\bigl[B(t)-B(t_k)\bigr]\dd t\label{err24}\\
\varepsilon_n^{(2,5)}&=\frac12\sum_{k=0}^{n-1}\int_{t_k}^{t_{k+1}}\bigl(f'g\bigr)^2(X_k)\widetilde{X}(t)\bigl[B(t)-B(t_k)\bigr]^2\dd t.\label{err25}
\end{align}
One then obtains for all $n\in\{1,\ldots,N\}$
\begin{align}\label{eq:decompNum}
X_n-x_0&=\sum_{k=0}^{n-1}\int_{t_k}^{t_{k+1}}G(X_k)\dd t+\varepsilon_n^{(2,1)}\\
&+\sum_{k=0}^{n-1}\int_{t_k}^{t_{k+1}}\Bigl(g(X_k)+\bigl(fg\bigr)(X_k)\bigl[B(t)-B(t_k)\bigr]\Bigr)\dd B(t)+\varepsilon_{n}^{(2,2)}\nonumber\\
&+\sum_{k=0}^{n-1}\int_{t_k}^{t_{k+1}}\bigl(f'g\bigr)(X_k)X_k\bigl[B(t)-B(t_k)\bigr]\dd B(t)+\varepsilon_n^{(2,3)}\nonumber\\
&+\varepsilon_n^{(2,4)}+\varepsilon_n^{(2,5)}.\nonumber
\end{align}

Finally, for all $n\in\{1,\ldots,N\}$, set
\begin{align}
\varepsilon_{n}^{(3,1)}&=\tau\sum_{k=0}^{n-1}\Bigl(G(X_k)-G(X(t_k))\Bigr)\label{err31}\\
\varepsilon_{n}^{(3,2)}&=\sum_{k=0}^{n-1}\int_{t_k}^{t_{k+1}}\Bigl(g(X_k)-g(X(t_k))\Bigr)\dd B(t)\label{err32}\\
\varepsilon_{n}^{(3,3)}&=\sum_{k=0}^{n-1}\int_{t_k}^{t_{k+1}}\Bigl(\bigl(g'g)(X_k)-\bigl(g'g\bigr)(X(t_k))\Bigr)\bigl[B(t)-B(t_k)\bigr]\dd B(t).\label{err33}
\end{align}
Note that for all $x\ge 0$ one has
\[
g'(x)g(x)=f(x)g(x)+f'(x)g(x)x.
\]
Then combining the decompositions~\eqref{eq:decompEx} of $X(t_n)-x_0$ and~\eqref{eq:decompNum} of $X_n-x_0$, the error $e_n=X_n-X(t_n)$ is decomposed as follows: for all $n\in\{1,\ldots,N\}$, one has
\begin{equation}\label{eqDecErr}
e_n=-\sum_{j=1}^{2}\varepsilon_n^{(1,j)}+\sum_{j=1}^{5}\varepsilon_n^{(2,j)}+\sum_{j=1}^{3}\varepsilon_n^{(3,j)}.
\end{equation}

Upper bounds for moments of the error terms $\varepsilon_n^{(1,j)}$ ($j=1,2$), resp. $\varepsilon_n^{(2,j)}$ ($j=1,\ldots,5$), are provided in Lemma~\ref{lem:aux1}, resp.~\ref{lem:aux2}, stated and proved in Section~\ref{secAuxErr}. The error terms $\varepsilon_n^{(3,j)}$ ($j=1,2,3$) are treated in Section~\ref{secOrder}, where \Cref{theoMain} is finally established.

\subsection{Auxiliary error bounds}\label{secAuxErr}

In this section, we state and prove Lemmas~\ref{lem:aux1} and~\ref{lem:aux2} on the error terms $\varepsilon_n^{(1,j)}$ and $\varepsilon_n^{(2,j)}$ respectively.
\begin{lemma}\label{lem:aux1}
Under the setting of \Cref{theoMain}, one has the following upper bounds for the error terms defined by~\eqref{err11}-\eqref{err12}: for all $n\in\{1,\ldots,N\}$ one has
\begin{equation}\label{eqErr1}
\E\left[|\varepsilon_n^{(1,j)}|^{2p}\right]\leq C_p(T,x_0)\dt^{2p},\qquad \forall~j=1,2.
\end{equation}
\end{lemma}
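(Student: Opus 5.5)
We bound the two terms separately, using only the integral form of the exact solution, the moment bounds of \Cref{propMomEx} and the increment bounds of \Cref{propregExNum}.

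\textbf{The stochastic integral term $\varepsilon_n^{(1,2)}$.} This term is a sum of It\^o integrals with predictable integrands. We apply the discrete BDG inequality~\eqref{eq:BDGdiscrete}, which reduces the claim to the pointwise bound
\[
\E\bigl[|\phi_k(t)|^{2p}\bigr]\le C\dt^{2p},\qquad \phi_k(t)=g(X(t))-g(X(t_k))-(g'g)(X(t_k))[B(t)-B(t_k)].
\]
Summing over $k$ and integrating then gives $C\,t_n\dt^{2p}\le C(T)\dt^{2p}$. To get the pointwise bound, we Taylor expand:
\[
g(X(t))-g(X(t_k))=g'(X(t_k))\bigl(X(t)-X(t_k)\bigr)+R_k(t),\qquad |R_k(t)|\le \tfrac12\|g''\|_\infty|X(t)-X(t_k)|^2.
\]
By \Cref{propregExNum} (with exponent $4p$), $\E|R_k(t)|^{2p}\le C\dt^{2p}$. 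Next we write $X(t)-X(t_k)=\int_{t_k}^t G(X(s))\dd s+\int_{t_k}^t g(X(s))\dd B(s)$. Then
\begin{align*}
g'(X(t_k))\bigl(X(t)-X(t_k)\bigr)-(g'g)(X(t_k))[B(t)-B(t_k)]
&=g'(X(t_k))\int_{t_k}^t G(X(s))\dd s\\
&\quad+g'(X(t_k))\int_{t_k}^t \bigl(g(X(s))-g(X(t_k))\bigr)\dd B(s).
\end{align*}
The factor $g'(X(t_k))$ is bounded and $\mathcal{F}_{t_k}$-measurable, so it can be placed inside the stochastic integral. The drift part then has $2p$-th moment $\le C\dt^{2p}$, by H\"older, the linear growth~\eqref{eq:Gg-growth} and \Cref{propMomEx}. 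For the martingale part we apply BDG~\eqref{eq:BDG} and use that $g$ is Lipschitz:
\[
\E\Bigl|\int_{t_k}^t\bigl(g(X(s))-g(X(t_k))\bigr)\dd B(s)\Bigr|^{2p}\le C\dt^{p-1}\int_{t_k}^t \E|X(s)-X(t_k)|^{2p}\dd s\le C\dt^{2p},
\]
again by \Cref{propregExNum}. This yields the pointwise bound, and hence the claim for $j=2$.

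\textbf{The drift term $\varepsilon_n^{(1,1)}$.} Here a crude estimate gives only order $1/2$, so we must exploit cancellations. We expand
\[
G(X(t))-G(X(t_k))=G'(X(t_k))\int_{t_k}^t G(X(s))\dd s+G'(X(t_k))\int_{t_k}^t g(X(s))\dd B(s)+\widetilde R_k(t),
\]
with $|\widetilde R_k(t)|\le C|X(t)-X(t_k)|^2$. The remainder and the $\dd s$ part are $O(\dt)$ in $L^{2p}$ pointwise, so they contribute $O(\dt)$ after the time integral and the sum over $k$; here Minkowski's inequality suffices. The main obstacle is the stochastic part,
\[
M_n=\sum_{k}\int_{t_k}^{t_{k+1}}G'(X(t_k))\int_{t_k}^t g(X(s))\dd B(s)\,\dd t,
\]
which is only $O(\dt^{1/2})$ pointwise. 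We handle it with stochastic Fubini:
\[
\int_{t_k}^{t_{k+1}}\int_{t_k}^t g(X(s))\dd B(s)\,\dd t=\int_{t_k}^{t_{k+1}}(t_{k+1}-s)\,g(X(s))\dd B(s).
\]
Thus $M_n=\sum_k\int_{t_k}^{t_{k+1}}G'(X(t_k))(t_{k+1}-s)g(X(s))\dd B(s)$ is a martingale sum with predictable integrands bounded by $C\dt\,|X(s)|$. The discrete BDG inequality~\eqref{eq:BDGdiscrete}, together with \Cref{propMomEx}, then gives $\E|M_n|^{2p}\le C(T)\dt^{2p}$. Combining the three contributions via Minkowski concludes the proof of~\eqref{eqErr1}.
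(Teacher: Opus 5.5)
Your proof is correct, but it is organized differently from the paper's. The paper applies It\^o's formula to $G(X(t))$ and $g(X(t))$ on each subinterval.

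For $\varepsilon_n^{(1,1)}$, this together with deterministic and stochastic Fubini gives an exact representation. The error becomes a sum of $(t_{k+1}-s)$-weighted integrals of $(G'G+\frac12 G''g^2)(X(s))$ and $(G'g)(X(s))$, with no remainder, so only the moment bounds of \Cref{propMomEx} are needed.

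For $\varepsilon_n^{(1,2)}$, It\^o's formula leaves iterated integrals involving $(g'g)(X(s))-(g'g)(X(t_k))$. The paper controls these through a local Lipschitz estimate for $g'g$ (with constant growing linearly in $x$), Cauchy--Schwarz, and higher moments.

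You instead expand to first order around $X(t_k)$ and substitute the integral form of $X(t)-X(t_k)$. All second-derivative contributions go into a quadratic remainder, which you handle with \Cref{propregExNum} at exponent $4p$. The decisive step for $\varepsilon_n^{(1,1)}$ is the same in both arguments: stochastic Fubini to produce the weight $(t_{k+1}-s)$, followed by the discrete BDG inequality~\eqref{eq:BDGdiscrete}.

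Your route gives a cleaner treatment of $\varepsilon_n^{(1,2)}$. It uses only the global Lipschitz continuity of $g$ and the boundedness of $g''$, so it never needs a Lipschitz-type bound for the non-globally-Lipschitz map $g'g$. Incidentally, the paper's display~\eqref{eq:boundg'g} omits the factor $|x_2-x_1|$ that its proof actually uses.

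The paper's route, in turn, gives an exact identity for $\varepsilon_n^{(1,1)}$. That identity needs neither a remainder term nor the increment estimates.
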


\begin{proof}[Proof of Lemma~\ref{lem:aux1}]
The inequality~\eqref{eqErr1} is proved separately for the error terms $\varepsilon_{n}^{(1,1)}$ and $\varepsilon_{n}^{(1,2)}$.

{\bf Treatment of the error term $\varepsilon_{n}^{(1,1)}$.}

Recall the definition~\eqref{err11} of $\varepsilon_n^{(1,1)}$. For all $k\in\{0,\ldots,N-1\}$ and all $t\in(t_k,t_{k+1})$, applying It\^o's formula, one has
\[
G(X(t))-G(X(t_k))=\int_{t_k}^{t}\bigl(G'G+\frac12 G''g^2\bigr)(X(s))\dd s+\int_{t_k}^{t}(G'g)(X(s))\dd B(s).
\]
Integrating on the interval $[t_k,t_{k+1}]$ and applying deterministic and stochastic versions of the Fubini theorem, one thus obtains
\begin{align*}
\int_{t_k}^{t_{k+1}}\Bigl(G(X(t))-G(X(t_k))\Bigr)\dd t&=\int_{t_k}^{t_{k+1}}\bigl(t_{k+1}-s\bigr)\bigl(G'G+\frac12 G''g^2\bigr)(X(s))\dd s\\
&+\int_{t_k}^{t_{k+1}}\bigl(t_{k+1}-s\bigr)(G'g)(X(s))\dd B(s).
\end{align*}
Owing to \Cref{assCoeff}, the mappings $G'$ and $G''$ are bounded, and for the mapping $g$ one has the inequality~\eqref{eq:Gg-growth}. As a result, applying the moment bounds~\eqref{momentExact} from \Cref{propMomEx}, one obtains for all $s\in[0,T]$
\begin{align*}
\E\bigl[\bigl|\bigl(G'G+\frac12 G''g^2\bigr)(X(s))\bigr|^{2p}\bigr]+\E\left[\big|(G'g)(X(s))|^{2p}\right]&\le C_p\E[|X(s)|^{2p}]+ C_p\E[|X(s)|^{4p}]\\
&\le C_p(T,x_0).
\end{align*}
Applying the H\"older inequality and the discrete-time version~\eqref{eq:BDGdiscrete} of the BDG inequality, and the moment bounds above, one thus obtains the upper bounds
\begin{align*}
\E\left[|\varepsilon_n^{(1,1)}|^{2p}\right]&\le C_p(T)
\sum_{k=0}^{n-1}\int_{t_k}^{t_{k+1}}|t_{k+1}-s|^{2p}\E\bigl[\bigl|\bigl(G'G+\frac12 G''g^2\bigr)(X(s))\bigr|^{2p}\bigr]\dd s\\
&+C_p(T)\sum_{k=0}^{n-1}\int_{t_k}^{t_{k+1}}|t_{k+1}-s|^{2p}\E\left[\big|(G'g)(X(s))|^{2p}\right]\dd s\\
&\le C_p(T,x_0)\tau^{2p}.
\end{align*}
This concludes the treatment of the error term $\varepsilon_n^{(1,1)}$.

{\bf Treatment of the error term $\varepsilon_{n}^{(1,2)}$.}

Recall the definition~\eqref{err12} of $\varepsilon_n^{(1,2)}$. For all $k\in\{0,\ldots,N-1\}$ and all $t\in(t_k,t_{k+1})$, using It\^o's formula, one has
\begin{align*}
g(X(t))-g(X(t_k))&=\int_{t_k}^{t}\bigl(g'G+\frac12 g''g^2\bigr)(X(s))\dd s+\int_{t_k}^{t}(g'g)(X(s))\dd B(s)\\
&=(g'g)(X(t_k))\bigl[B(t)-B(t_k)\bigr]\\
&+\int_{t_k}^{t}\bigl(g'G+\frac12 g''g^2\bigr)(X(s))\dd s\\
&+\int_{t_k}^{t}\bigl[(g'g)(X(s))-(g'g)(X(t_k))\bigr]\dd B(s).
\end{align*}
Integrating on the interval $[t_k,t_{k+1}]$, one thus obtains
\begin{align*}
&\int_{t_k}^{t_{k+1}}\Bigl(g(X(t))-g(X(t_k))-\bigl(g'g\bigr)(X(t_k))\bigl[B(t)-B(t_k)\bigr]\Bigr)\dd B(t)\\
&=\int_{t_k}^{t_{k+1}}\int_{t_k}^{t}\bigl(g'G+\frac12 g''g^2\bigr)(X(s))\dd s\dd B(t)\\
&+\int_{t_k}^{t_{k+1}}\int_{t_k}^{t}\bigl[(g'g)(X(s))-(g'g)(X(t_k))\bigr]\dd B(s)\dd B(t).
\end{align*}
Applying the discrete-time version~\eqref{eq:BDGdiscrete} of the BDG inequality, one obtains the upper bound
\begin{align*}
\E\left[|\varepsilon_n^{(1,2)}|^{2p}\right]&\le C_p(T)\sum_{k=0}^{n-1}\int_{t_k}^{t_{k+1}}\E\left[\left|\int_{t_k}^{t}\bigl(g'G+\frac12 g''g^2\bigr)(X(s))\dd s\right|^{2p}\right]\dd t\\
&+C_p(T)\sum_{k=0}^{n-1}\int_{t_k}^{t_{k+1}}\E\left[\left|\int_{t_k}^{t}\bigl[(g'g)(X(s))-(g'g)(X(t_k))\bigr]\dd B(s)\right|^{2p}\right]\dd t.
\end{align*}
The two terms in the right-hand side of the inequality above are treated separately.

On the one hand, owing to \Cref{assCoeff} the mappings $g'$ and $g''$ are bounded, and for the mappings $g$ and $G$ one has the inequality~\eqref{eq:Gg-growth}. Applying the H\"older inequality and the moment bounds~\eqref{momentExact} from \Cref{propMomEx}, one then obtains the upper bounds
\begin{align*}
\E\left[\Big|\int_{t_k}^{t}\bigl(g'G+\frac12 g''g^2\bigr)(X(s))\dd s\Big|^{2p}\right]
&\le (t-t_k)^{2p-1}\int_{t_k}^{t}\E\left[\Big|\bigl(g'G+\frac12 g''g^2\bigr)(X(s))\Big|^{2p}\right]\dd s\\
&\le C_p\tau^{2p-1}\int_{t_k}^{t}\bigl(\E\left[|X(s)|^{2p}\right]+\E\left[|X(s)|^{4p}\right]\bigr)\dd s\\
&\le C_p(T,x_0)\tau^{2p}.
\end{align*}

On the other hand, applying the BDG inequality~\eqref{eq:BDG} and the H\"older inequality, one has
\begin{align*}
\E[\Big|\int_{t_k}^{t}\bigl[(g'g)(X(s))-(g'g)(X(t_k))\bigr]&\dd B(s)\Big|^{2p}]\le C_p\E[\Bigl(\int_{t_k}^{t}\big|(g'g)(X(s))-(g'g)(X(t_k))\big|^2\dd s\Bigr)^{p}]\\
&\le C_p(t-t_k)^{p-1}\int_{t_k}^{t}\E[\big|(g'g)(X(s))-(g'g)(X(t_k))\big|^{2p}\bigr]\dd s.
\end{align*}
The mapping $g'g$ is not globally Lipschitz continuous on $\R^+$, however it satisfies the inequality~\eqref{eq:boundg'g}. Therefore, applying the regularity property~\eqref{regExNum} from ~\Cref{propregExNum} for the exact solution, the Cauchy--Schwarz inequality, and the moment bounds~\eqref{momentExact} from \Cref{propMomEx}, one obtains the upper bounds
\begin{align*}
\E\left[\Big|\int_{t_k}^{t}\right.&\left.\left[(g'g)(X(s))-(g'g)(X(t_k))\right]\dd B(s)\Big|^{2p}\right]\\
&\le C_p\tau^{p-1}\int_{t_k}^{t}\E\left[\big|X(s)-X(t_k)\big|^{2p}\bigl(1+|X(s)|+|X(t_k)|\bigr)^{2p}\right]\dd s\\
&\le C_p\tau^{p-1}\int_{t_k}^{t}\bigl(\E\left[\big|X(s)-X(t_k)\big|^{4p}\right]\bigr)^{\frac12}\bigl(\E\left[\bigl(1+|X(s)|+|X(t_k)|\bigr)^{4p}\right]\bigr)^{\frac12}\dd s\\
&\le C_p(x_0)\tau^{p-1}\int_{t_k}^{t}(s-t_k)^{p}\dd s\bigl(1+\underset{t\in[0,T]}\sup~\E\left[|X(t)|^{4p}\right]\bigr)^{\frac12}\\
&\le C_p(x_0)\tau^{2p}.
\end{align*}
Combining the upper bounds above, for the error term $\varepsilon_n^{(1,2)}$ one obtains the upper bound
\[
\E\left[|\varepsilon_n^{(1,2)}|^{2p}\right]\le C_p(T,x_0)\tau^{2p}.
\]
This concludes the treatment of the error term $\varepsilon_n^{(1,2)}$.

The proof of Lemma~\ref{lem:aux1} is thus completed.
\end{proof}

Lemma~\ref{lem:aux2} provides upper bounds on the error terms $\varepsilon_n^{(2,j)}$ for $j=1,\ldots,5$.
\begin{lemma}\label{lem:aux2}
Under the setting of \Cref{theoMain}, one has the following error bounds for the error terms defined by Equations~\eqref{err21} to~\eqref{err25}: if the time-step size satisfies the condition $\dt\le \tau_p$, for all $n\in\{1,\ldots,N\}$ one has
\begin{equation}\label{eqErr2}
\E\left[|\varepsilon_n^{(2,j)}|^{2p}\right]\leq C_p(T,x_0)\dt^{2p}, \qquad \forall~j=1,\ldots,5.
\end{equation}
\end{lemma}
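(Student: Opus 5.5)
The plan is to treat the five error terms separately, always using two tools. The first is the moment bounds~\eqref{momentNum} of \Cref{propMomNum} and the increment bounds~\eqref{regExNum} of \Cref{propregExNum} for $\widetilde{X}$, applied with exponents $4p$ or $8p$ where Cauchy--Schwarz is needed; this is why the threshold $\tau_p$ must be shrunk to the one associated with a larger exponent. The second is the boundedness of $F$, $f$, $f'g$ from~\eqref{eq:boundF}--\eqref{eq:boundf'g}. The guiding principle is the usual one for first-order strong estimates. A term that is a sum of stochastic integrals $\sum_k\int_{t_k}^{t_{k+1}}\phi_k\,\dd B$ only needs an integrand of size $\dt$ in $L^{2p}$, thanks to the discrete BDG inequality~\eqref{eq:BDGdiscrete}. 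A term that is a sum of $\dd t$-integrals needs an integrand of size $\dt$ after the leading martingale part has been removed, since H\"older's inequality in the sum over $k$ loses nothing with respect to $\sum_k\dt\le T$.

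\textbf{The easy terms $\varepsilon_n^{(2,3)}$ and $\varepsilon_n^{(2,5)}$.} For $\varepsilon_n^{(2,3)}$, the integrand $(f'g)(X_k)(\widetilde{X}(t)-\widetilde{X}(t_k))[B(t)-B(t_k)]$ is predictable. By Cauchy--Schwarz, \eqref{regExNum} and Gaussian moments, its $2p$-th moment is bounded by $C\dt^{p/2\cdot 2}\dt^{p}=C\dt^{2p}$, and \eqref{eq:BDGdiscrete} then gives the claim. For $\varepsilon_n^{(2,5)}$, H\"older in $(k,t)$ together with $\E[\widetilde{X}(t)^{4p}]^{1/2}\E[|B(t)-B(t_k)|^{8p}]^{1/2}\le C\dt^{2p}$ directly gives $\dt^{2p}$.

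\textbf{The drift terms $\varepsilon_n^{(2,1)}$ and $\varepsilon_n^{(2,4)}$.} Here the integrands are only of size $\dt^{1/2}$, so the leading part must be isolated as a martingale. For $\varepsilon_n^{(2,1)}$, I will write $\widetilde{X}(t)-\widetilde{X}(t_k)$ using the auxiliary SDE~\eqref{auxSDE}. The $\dd s$-parts are $O(\dt)$ and are handled by H\"older. For the $\dd B(s)$-parts, with integrand $\widetilde{X}(s)[f(X_k)+(f'g)(X_k)(B(s)-B(t_k))]$, I will apply the stochastic Fubini theorem, exactly as in the proof of \Cref{lem:aux1}, to get $\int_{t_k}^{t_{k+1}}(t_{k+1}-s)F(X_k)\widetilde{X}(s)[\cdots]\dd B(s)$. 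The discrete BDG inequality then bounds this part by $C\dt^{2p}$. For $\varepsilon_n^{(2,4)}$, I will split $\widetilde{X}(t)=X_k+(\widetilde{X}(t)-X_k)$. The piece with $X_k$ becomes, via Fubini, $\sum_k(ff'g)(X_k)X_k\int_{t_k}^{t_{k+1}}(t_{k+1}-s)\dd B(s)$, which is bounded by discrete BDG and the moment bounds. The remaining piece is a product of two factors of size $\dt^{1/2}$ and is handled by H\"older and Cauchy--Schwarz.

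\textbf{The main term $\varepsilon_n^{(2,2)}$.} This is where the structure of the scheme matters, and I expect it to be the main obstacle. It suffices to show $\E[|\widetilde{X}(t)-X_k-g(X_k)(B(t)-B(t_k))|^{2p}]\le C\dt^{2p}$ and then apply \eqref{eq:BDGdiscrete}. Using~\eqref{auxSDE}, the term $\int_{t_k}^t\widetilde{X}(s)f(X_k)\dd B(s)$ equals $f(X_k)X_k(B(t)-B(t_k))+\int_{t_k}^t f(X_k)(\widetilde{X}(s)-X_k)\dd B(s)$. By the factorization~\eqref{eq:facto}, $f(X_k)X_k=g(X_k)$, so the leading term cancels exactly. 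The remainder is a stochastic integral with an $O(\dt^{1/2})$ integrand, hence of size $O(\dt)$ by BDG and \eqref{regExNum}. The other contributions of~\eqref{auxSDE} are the $\dd s$-integrals, which are $O(\dt)$, and the $(f'g)(X_k)(B(s)-B(t_k))\dd B(s)$ integral, which is also $O(\dt)$ in $L^{2p}$ by BDG and Cauchy--Schwarz. Collecting these bounds yields~\eqref{eqErr2} for $j=2$ and completes the plan.
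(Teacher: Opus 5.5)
Your proposal is correct and follows the paper's proof essentially step by step. The matching steps are: decomposing $\widetilde{X}(t)-\widetilde{X}(t_k)$ via the auxiliary SDE, with stochastic Fubini and discrete BDG for $\varepsilon_n^{(2,1)}$; the exact cancellation $f(X_k)X_k=g(X_k)$ for $\varepsilon_n^{(2,2)}$; splitting $\widetilde{X}(t)=X_k+(\widetilde{X}(t)-X_k)$ for $\varepsilon_n^{(2,4)}$; and direct BDG, Cauchy--Schwarz or Minkowski estimates for $\varepsilon_n^{(2,3)}$ and $\varepsilon_n^{(2,5)}$. Your remark that the step-size threshold must be the one for the larger exponent ($4p$ or $8p$) is valid, and the paper leaves it implicit.
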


The proof of Lemma~\ref{lem:aux2} is divided into three parts to simplify the presentation. The first part deals with the error term $\varepsilon_n^{(2,1)}$ defined by~\eqref{err21}. The second part then deals with the error term $\varepsilon_n^{(2,2)}$ defined by~\eqref{err22}. Finally, the third part deals with the error terms $\varepsilon_n^{(2,3)}$, $\varepsilon_n^{(2,4)}$ and $\varepsilon_n^{(2,5)}$ defined by~\eqref{err23},~\eqref{err24} and~\eqref{err25}.

\begin{proof}[Proof of Lemma~\ref{lem:aux2}: Part 1]
Recall that the error term $\varepsilon_n^{(2,1)}$ is defined by~\eqref{err21}. For all $k\in\{0,\ldots,N-1\}$, on the interval $[t_k,t_{k+1}]$ the auxiliary process $\widetilde{X}$, is solution to the auxiliary stochastic differential equation~\eqref{auxSDE}, thus for all $t\in(t_k,t_{k+1})$, one has
\begin{align*}
\widetilde{X}(t)-\widetilde{X}(t_k)&=\int_{t_k}^{t}\widetilde{X}(s)F(X_k)\dd s\\
&+\int_{t_k}^{t}\widetilde{X}(s)f(X_k)\dd B(s)\\
&+\int_{t_k}^{t}\widetilde{X}(s)\bigl(f'g\bigr)(X_k)\bigl(B(s)-B(t_k)\bigr)\dd B(s)\\
&+\int_{t_k}^{t}\widetilde{X}(s)\bigl(ff'g\bigr)(X_k)\bigl(B(s)-B(t_k)\bigr)\dd s\\
&+\frac12\int_{t_k}^{t}\widetilde{X}(s)\bigl(f'g\bigr)^2(X_k)\bigl(B(s)-B(t_k)\bigr)^2\dd s.
\end{align*}
Integrating on the interval $[t_k,t_{k+1}]$ and summing for $k=0,\ldots,n-1$, the error term $\varepsilon_n^{(2,1)}$ is then decomposed as
\begin{align*}
\varepsilon_n^{(2,1)}
&=\sum_{k=0}^{n-1}\int_{t_k}^{t_{k+1}}F(X_k)\Bigl(\widetilde{X}(t)-\widetilde{X}(t_k)\Bigr)\dd t\\
&=\varepsilon_n^{(2,1,1)}+\varepsilon_n^{(2,1,2)}+\varepsilon_n^{(2,1,3)}+\varepsilon_n^{(2,1,4)}+\varepsilon_n^{(2,1,5)},
\end{align*}
where the auxiliary error terms are defined as
\begin{align*}
\varepsilon_n^{(2,1,1)}&=\sum_{k=0}^{n-1}\int_{t_k}^{t_{k+1}}\int_{t_k}^{t}\widetilde{X}(s)F^2(X_k)\dd s\dd t\\
\varepsilon_n^{(2,1,2)}&=\sum_{k=0}^{n-1}\int_{t_k}^{t_{k+1}}\int_{t_k}^{t}\widetilde{X}(s)\bigl(Ff\bigr)(X_k)\dd B(s)\dd t\\
\varepsilon_n^{(2,1,3)}&=\sum_{k=0}^{n-1}\int_{t_k}^{t_{k+1}}\int_{t_k}^{t}\widetilde{X}(s)\bigl(Ff'g\bigr)(X_k)\bigl(B(s)-B(t_k)\bigr)\dd B(s)\dd t\\
\varepsilon_n^{(2,1,4)}&=\sum_{k=0}^{n-1}\int_{t_k}^{t_{k+1}}\int_{t_k}^{t}\widetilde{X}(s)\bigl(Fff'g\bigr)(X_k)\bigl(B(s)-B(t_k)\bigr)\dd s\dd t\\
\varepsilon_n^{(2,1,5)}&=\frac12\sum_{k=0}^{n-1}\int_{t_k}^{t_{k+1}}\int_{t_k}^{t}\widetilde{X}(s)\bigl(F(f'g)^2\bigr)(X_k)\bigl(B(s)-B(t_k)\bigr)^2\dd s\dd t.
\end{align*}
The remainder of the proof is devoted to proving upper bounds for the five auxiliary error terms defined above.

{\bf Treatment of the error term $\varepsilon_{n}^{(2,1,1)}$.}

The mapping $F$ is bounded (see~\eqref{eq:boundF}). Applying the Minkowski inequality and the moment bounds~\eqref{momentNum} from \Cref{propMomNum} on the auxiliary process $\widetilde{X}$, one obtains the upper bounds
\begin{align*}
\left(\E\left[\left|\sum_{k=0}^{n-1}\int_{t_k}^{t_{k+1}}\int_{t_k}^{t}\widetilde{X}(s)F^2(X_k)\dd s\dd t\right|^{2p}\right]\right)^{\frac{1}{2p}}
&\le\sum_{k=0}^{n-1}\int_{t_k}^{t_{k+1}}\int_{t_k}^{t}\left(\E\left[\left|\widetilde{X}(s)F^2(X_k)\right|^{2p}\right]\right)^{\frac{1}{2p}}\dd s\dd t\\
&\le C\sum_{k=0}^{n-1}\int_{t_k}^{t_{k+1}}\int_{t_k}^{t}\dd s\dd t\underset{s\in[0,T]}\sup~\left(\E\left[|\widetilde{X}(s)|^{2p}\right]\right)^{\frac{1}{2p}}\\
&\le C_p(T,x_0)\tau.
\end{align*}

{\bf Treatment of the error term $\varepsilon_{n}^{(2,1,2)}$.}

Applying the stochastic Fubini theorem, one has
\[
\int_{t_k}^{t_{k+1}}\int_{t_k}^{t}\widetilde{X}(s)\bigl(Ff\bigr)(X_k)\dd B(s)\dd t=\int_{t_k}^{t_{k+1}}(t_{k+1}-s)\widetilde{X}(s)\bigl(Ff\bigr)(X_k)\dd B(s).
\]
The mappings $F$ and $f$ are bounded (see~\eqref{eq:boundF} and~\eqref{eq:boundf}). Applying the discrete-time version~\eqref{eq:BDGdiscrete} of the BDG inequality and the moment bounds~\eqref{momentNum} from \Cref{propMomNum} on the auxiliary process $\widetilde{X}$, one obtains the upper bounds
\begin{align*}
\E\left[\left|\sum_{k=0}^{n-1}\int_{t_k}^{t_{k+1}}\right.\right.&\left.\left.(t_{k+1}-s)\widetilde{X}(s)\bigl(Ff\bigr)(X_k)\dd B(s)\right|^{2p}\right]\\
&\le C_p(T)\sum_{k=0}^{n-1}\int_{t_k}^{t_{k+1}}(t_{k+1}-s)^{2p}\E\left[\left|\widetilde{X}(s)\bigl(Ff\bigr)(X_k)\right|^{2p}\right]\dd s\\
&\le C_p(T)\tau^{2p}\underset{s\in[0,T]}\sup~\E\left[|\widetilde{X}(s)|^{2p}\right]\\
&\le C_p(T,x_0)\tau^{2p}.
\end{align*}

{\bf Treatment of the error term $\varepsilon_{n}^{(2,1,3)}$.}

Applying the stochastic Fubini theorem, one has
\begin{align*}
\int_{t_k}^{t_{k+1}}\int_{t_k}^{t}&\widetilde{X}(s)\bigl(Ff'g\bigr)(X_k)\bigl(B(s)-B(t_k)\bigr)\dd B(s)\dd t\\
&=\int_{t_k}^{t_{k+1}}(t_{k+1}-s)\widetilde{X}(s)\bigl(Ff'g\bigr)(X_k)\bigl(B(s)-B(t_k)\bigr)\dd B(s).
\end{align*}
The mappings $F$ and $f'g$ are bounded (see~\eqref{eq:boundF} and~\eqref{eq:boundf'g}). Applying the discrete-time version~\eqref{eq:BDGdiscrete} of the BDG inequality, the Cauchy--Schwarz inequality, and the moment bounds~\eqref{momentNum} from \Cref{propMomNum} on the auxiliary process $\widetilde{X}$, one obtains the upper bounds
\begin{align*}
\E\left[\left|\sum_{k=0}^{n-1}\int_{t_k}^{t_{k+1}}\right.\right.&\left.\left.
(t_{k+1}-s)\widetilde{X}(s)\bigl(Ff'g\bigr)(X_k)\bigl(B(s)-B(t_k)\bigr)\dd B(s)\right|^{2p}\right]\\
&\le C_p(T)\sum_{k=0}^{n-1}\int_{t_k}^{t_{k+1}}(t_{k+1}-s)^{2p}\E\left[|\widetilde{X}(s)\bigl(Ff'g\bigr)(X_k)\bigl(B(s)-B(t_k)|^{2p}\right]\dd s\\
&\le C_p(T)\tau^{2p}\sum_{k=0}^{n-1}\int_{t_k}^{t_{k+1}}\bigl(\E\bigl[|\widetilde{X}(s)|^{4p}\bigr]\bigr)^{\frac12}\bigl(\E\left[ |B(s)-B(t_k)|^{4p}\right]\bigr)^{\frac12}\dd s\\
&\le C_p(T)\tau^{2p}\sum_{k=0}^{n-1}\int_{t_k}^{t_{k+1}}(s-t_k)^{p}\dd s\underset{s\in[0,T]}\sup~\E\left[|\widetilde{X}(s)|^{2p}\right]\\
&\le C_p(T,x_0)\tau^{3p}.
\end{align*}

{\bf Treatment of the error term $\varepsilon_{n}^{(2,1,4)}$.}

The mappings $F$, $f$ and $f'g$ are bounded (see~\eqref{eq:boundF},~\eqref{eq:boundf} and~\eqref{eq:boundf'g}). Applying the Minkowski and Cauchy--Schwarz inequalities, and the moment bounds~\eqref{momentNum} from \Cref{propMomNum} on the auxiliary process $\widetilde{X}$, one obtains the upper bounds
\begin{align*}
\left(\E\left[\left|\right.\right.\right.&\left.\left.\left.
\sum_{k=0}^{n-1}\int_{t_k}^{t_{k+1}}\int_{t_k}^{t}\widetilde{X}(s)\bigl(Fff'g\bigr)(X_k)\bigl(B(s)-B(t_k)\bigr)\dd s\dd t\right|^{2p}\right]\right)^{\frac{1}{2p}}\\
&\le \sum_{k=0}^{n-1}\int_{t_k}^{t_{k+1}}\int_{t_k}^{t}\left(\E\left[\big|\widetilde{X}(s)\bigl(Fff'g\bigr)(X_k)\bigl(B(s)-B(t_k)\bigr)\big|^{2p}\right]\right)^{\frac{1}{2p}}\dd s \dd t\\
&\le C_p(T)\sum_{k=0}^{n-1}\int_{t_k}^{t_{k+1}}\int_{t_k}^{t}\bigl(\E\bigl[|\widetilde{X}(s)|^{4p}\bigr]\bigr)^{\frac{1}{4p}}\bigl(\E\left[ |B(s)-B(t_k)|^{4p}\right]\bigr)^{\frac{1}{4p}}\dd s \dd t\\
&\le C_p\sum_{k=0}^{n-1}\int_{t_k}^{t_{k+1}}\int_{t_k}^{t} (s-t_k)^{\frac12} \dd s \dd t \underset{s\in[0,T]}\sup~\left(\E\left[|\widetilde{X}(s)|^{4p}\right]\right)^{\frac{1}{4p}}\\
&\le C_p(T,x_0)\tau^{\frac32}.
\end{align*}

{\bf Treatment of the error term $\varepsilon_{n}^{(2,1,5)}$.}

The mappings $F$ and $f'g$ are bounded (see~\eqref{eq:boundF}  and~\eqref{eq:boundf'g}). Applying the Minkowski and Cauchy--Schwarz inequalities, and the moment bounds~\eqref{momentNum} from \Cref{propMomNum} on the auxiliary process $\widetilde{X}$, one obtains the upper bounds
\begin{align*}
&\bigg( \E \bigg[ \Big|
\sum_{k=0}^{n-1}\int_{t_k}^{t_{k+1}}\int_{t_k}^{t}\widetilde{X}(s)\bigl(F(f'g)^2\bigr)(X_k)\bigl(B(s)-B(t_k)\bigr)^2\dd s\dd t\Big|^{2p} \bigg] \bigg)^{\frac{1}{2p}}\\
&\quad\le \sum_{k=0}^{n-1}\int_{t_k}^{t_{k+1}}\int_{t_k}^{t}
\left( \E \left[ \big|\widetilde{X}(s)\bigl(F(f'g)^2\bigr)(X_k)\bigl(B(s)-B(t_k)\bigr)^2\big|^{2p} \right] \right)^{\frac{1}{2p}}\dd s\dd t\\
&\quad \le \sum_{k=0}^{n-1}\int_{t_k}^{t_{k+1}}\int_{t_k}^{t}\bigl(\E\bigl[|\widetilde{X}(s)|^{4p}\bigr]\bigr)^{\frac{1}{4p}}\bigl(\E\left[ |B(s)-B(t_k)|^{8p}\right]\bigr)^{\frac{1}{4p}}\dd s \dd t\\
&\quad \le C_p\sum_{k=0}^{n-1}\int_{t_k}^{t_{k+1}}\int_{t_k}^{t} (s-t_k) \dd s \dd t \underset{s\in[0,T]}\sup~\left(\E\left[|\widetilde{X}(s)|^{4p}\right]\right)^{\frac{1}{4p}}\\
&\quad\le C_p(T,x_0)\tau^{2}.
\end{align*}

{\bf Conclusion.}

Gathering the upper bounds obtained above for the auxiliary error terms $\varepsilon_n^{(2,1,j)}$ for $j=1,\ldots,5$, one obtains the following upper bound for the error term $\varepsilon_n^{(2,1)}$: one has
\[
\E\left[|\varepsilon_n^{(2,1)}|^{2p}\right]\le C_p(T,x_0)\tau^{2p}.
\]
This concludes the first part of the proof of Lemma~\ref{lem:aux2}.
\end{proof}

\begin{proof}[Proof of Lemma~\ref{lem:aux2}: Part 2]
Recall that the error term $\varepsilon_n^{(2,2)}$ is defined by~\eqref{err22}. In addition, recall that owing to~\eqref{eq:facto} one has the factorization property $g(x)=xf(x)$ for all $x\in\R^+$. For all $k\in\{0,\ldots,N-1\}$, on the interval $[t_k,t_{k+1}]$ the auxiliary process $\widetilde{X}$, is solution to the auxiliary stochastic differential equation~\eqref{auxSDE}, thus for all $t\in(t_k,t_{k+1})$, one has
\begin{align*}
\widetilde{X}(t)-\widetilde{X}(t_k)-g(X_k)\bigl[B(t)-B(t_k)\bigr]
&=\int_{t_k}^{t}\widetilde{X}(s)F(X_k)\dd s\\
&+\int_{t_k}^{t}\bigl[\widetilde{X}(s)-\widetilde{X}(t_k)\bigr]f(X_k)\dd B(s)\\
&+\int_{t_k}^{t}\widetilde{X}(s)\bigl(f'g\bigr)(X_k)\bigl(B(s)-B(t_k)\bigr)\dd B(s)\\
&+\int_{t_k}^{t}\widetilde{X}(s)\bigl(ff'g\bigr)(X_k)\bigl(B(s)-B(t_k)\bigr)\dd s\\
&+\frac12\int_{t_k}^{t}\widetilde{X}(s)\bigl(f'g\bigr)^2(X_k)\bigl(B(s)-B(t_k)\bigr)^2\dd s.
\end{align*}
Integrating on the interval $[t_k,t_{k+1}]$ and summing for $k=0,\ldots,n-1$, the error term $\varepsilon_n^{(2,2)}$ is then decomposed as
\begin{align*}
\varepsilon_n^{(2,2)}
&=\sum_{k=0}^{n-1}\int_{t_k}^{t_{k+1}}f(X_k)\Bigl(\widetilde{X}(t)-\widetilde{X}(t_k)-g(X_k)\bigl[B(t)-B(t_k)\bigr]\Bigr)\dd B(t)\\
&=\varepsilon_n^{(2,2,1)}+\varepsilon_n^{(2,2,2)}+\varepsilon_n^{(2,2,3)}+\varepsilon_n^{(2,2,4)}+\varepsilon_n^{(2,2,5)},
\end{align*}
where the auxiliary error terms are defined as
\begin{align*}
\varepsilon_n^{(2,2,1)}&=\sum_{k=0}^{n-1}\int_{t_k}^{t_{k+1}}\int_{t_k}^{t}f(X_k)\widetilde{X}(s)F(X_k)\dd s\dd B(t)\\
\varepsilon_n^{(2,2,2)}&=\sum_{k=0}^{n-1}\int_{t_k}^{t_{k+1}}\int_{t_k}^{t}f^2(X_k)\bigl[\widetilde{X}(s)-\widetilde{X}(t_k)\bigr]\dd B(s)\dd B(t)\\
\varepsilon_n^{(2,2,3)}&=\sum_{k=0}^{n-1}\int_{t_k}^{t_{k+1}}\int_{t_k}^{t}\widetilde{X}(s)\bigl(ff'g\bigr)(X_k)\bigl(B(s)-B(t_k)\bigr)\dd B(s)\dd B(t)\\
\varepsilon_n^{(2,2,4)}&=\sum_{k=0}^{n-1}\int_{t_k}^{t_{k+1}}\int_{t_k}^{t}\widetilde{X}(s)\bigl(f^2f'g\bigr)(X_k)\bigl(B(s)-B(t_k)\bigr)\dd s\dd B(t)\\
\varepsilon_n^{(2,2,5)}&=\frac12\sum_{k=0}^{n-1}\int_{t_k}^{t_{k+1}}\int_{t_k}^{t}\widetilde{X}(s)\bigl(f(f'g)^2\bigr)(X_k)\bigl(B(s)-B(t_k)\bigr)^2\dd s \dd B(t).
\end{align*}
The remainder of the proof is devoted to proving upper bounds for the five auxiliary error terms defined above.

{\bf Treatment of the error term $\varepsilon_{n}^{(2,2,1)}$.}

The mappings $F$ and $f$ are bounded (see~\eqref{eq:boundF} and~\eqref{eq:boundf}). Applying the discrete-time version~\eqref{eq:BDGdiscrete} of the BDG inequality, the H\"older inequality, and the moment bounds~\eqref{momentNum} from \Cref{propMomNum} on the auxiliary process $\widetilde{X}$, one obtains the upper bounds
\begin{align*}
\E\Biggl[\Bigl|&\sum_{k=0}^{n-1}\int_{t_k}^{t_{k+1}}\int_{t_k}^{t}f(X_k)\widetilde{X}(s)F(X_k)\dd s\dd B(t)\Bigr|^{2p}\Biggr]\\
&\le C_p(T)\sum_{k=0}^{n-1}\int_{t_k}^{t_{k+1}}\E\Biggl[\Bigl|\int_{t_k}^{t}f(X_k)\widetilde{X}(s)F(X_k)\dd s\Bigr|^{2p}\Biggr]\dd t\\
&\le C_p(T)\sum_{k=0}^{n-1}\int_{t_k}^{t_{k+1}}(t-t_k)^{2p-1}\int_{t_k}^{t}\E\left[\big|f(X_k)\widetilde{X}(s)F(X_k)\big|^{2p}\right]\dd s\dd t\\
&\le C_p(T)\tau^{2p}\underset{s\in[0,T]}\sup~\E\left[|\widetilde{X}(s)|^{2p}\right]\\
&\le C_p(T,x_0)\tau^{2p}.
\end{align*}

{\bf Treatment of the error term $\varepsilon_{n}^{(2,2,2)}$.}

The mapping $f$ is bounded (see~\eqref{eq:boundf}). Applying the discrete-time version~\eqref{eq:BDGdiscrete} of the BDG inequality, the first part of the BDG inequality~\eqref{eq:BDG}, and the H\"older inequality, one obtains the upper bounds
\begin{align*}
\E\Biggl[\Bigl|&\sum_{k=0}^{n-1}\int_{t_k}^{t_{k+1}}\int_{t_k}^{t}f^2(X_k)\bigl[\widetilde{X}(s)-\widetilde{X}(t_k)\bigr]\dd B(s)\dd B(t)\Bigr|^{2p}\Biggr]\\
&\le C_p(T)\sum_{k=0}^{n-1}\int_{t_k}^{t_{k+1}}\E\Biggl[\Bigl|\int_{t_k}^{t}f^2(X_k)\bigl[\widetilde{X}(s)-\widetilde{X}(t_k)\bigr]\dd B(s)\Bigr|^{2p}\Biggr]\dd t\\
&\le C_p(T)\sum_{k=0}^{n-1}\int_{t_k}^{t_{k+1}}\E\Biggl[\Bigl(\int_{t_k}^{t}\big|f^2(X_k)\bigl[\widetilde{X}(s)-\widetilde{X}(t_k)\big|^2]\dd s\Bigr)^{p}\Biggr]\dd t\\
&\le C_p(T)\sum_{k=0}^{n-1}\int_{t_k}^{t_{k+1}}(t-t_k)^{p-1}\int_{t_k}^{t}\E\left[\big|\widetilde{X}(s)-\widetilde{X}(t_k)\big|^{2p}\right]\dd s\dd t.
\end{align*}
Applying the increment bounds~\eqref{regExNum} from Proposition~\ref{propregExNum} on the auxiliary process $\widetilde{X}$, one then obtains
\begin{align*}
\E\Biggl[\Bigl|&\sum_{k=0}^{n-1}\int_{t_k}^{t_{k+1}}\int_{t_k}^{t}f(X_k)^2\bigl[\widetilde{X}(s)-\widetilde{X}(t_k)\bigr]\dd B(s)\dd B(t)\Bigr|^{2p}\Biggr]\\
&\le C_p(T,x_0)\sum_{k=0}^{n-1}\int_{t_k}^{t_{k+1}}(t-t_k)^{p-1}\int_{t_k}^{t}(s-t_k)^{p}\dd s\dd t\\
&\le C_p(T,x_0)\tau^{2p}.
\end{align*}

{\bf Treatment of the error term $\varepsilon_{n}^{(2,2,3)}$.}

The mappings $f$ and $f'g$ are bounded (see~\eqref{eq:boundf} and~\eqref{eq:boundf'g}). Applying the discrete-time version~\eqref{eq:BDGdiscrete} of the BDG inequality, the first part of the BDG inequality~\eqref{eq:BDG}, and the H\"older and Cauchy--Schwarz inequalities, one obtains the upper bounds
\begin{align*}
\E\Biggl[\Bigl|&\sum_{k=0}^{n-1}\int_{t_k}^{t_{k+1}}\int_{t_k}^{t}\widetilde{X}(s)\bigl(ff'g\bigr)(X_k)\bigl(B(s)-B(t_k)\bigr)\dd B(s)\dd B(t)\Bigr|^{2p}\Biggr]\\
&\le C_p(T)\sum_{k=0}^{n-1}\int_{t_k}^{t_{k+1}}\E\Biggl[\Bigl|\int_{t_k}^{t}\widetilde{X}(s)\bigl(ff'g\bigr)(X_k)\bigl(B(s)-B(t_k)\bigr)\dd B(s)\Bigr|^{2p}\Biggr]\dd t\\
&\le C_p(T)\sum_{k=0}^{n-1}\int_{t_k}^{t_{k+1}}\Bigl(\int_{t_k}^{t} \E\left[\big|\widetilde{X}(s)\bigl(ff'g\bigr)(X_k)\bigl(B(s)-B(t_k)\bigr)\big|^2\right]\dd s\Bigr)^p\dd t\\
&\le C_p(T)\sum_{k=0}^{n-1}\int_{t_k}^{t_{k+1}}(t-t_k)^{p-1}\int_{t_k}^{t}\E\left[\big|\widetilde{X}(s)\bigl(B(s)-B(t_k)\bigr)\big|^{2p}\right]\dd s \dd t\\
&\le C_p(T)\sum_{k=0}^{n-1}\int_{t_k}^{t_{k+1}}(t-t_k)^{p-1}\int_{t_k}^{t}\left(\E\left[|\widetilde{X}(s)|^{4p}\right]\right)^{\frac12}
\left(\E\left[|B(s)-B(t_k)|^{4p}\right]\right)^{\frac12}\dd s \dd t.
\end{align*}
Applying the moment bounds~\eqref{momentNum} from \Cref{propMomNum} on the auxiliary process $\widetilde{X}$, one obtains the upper bounds
\begin{align*}
\E\Biggl[\Bigl|&\sum_{k=0}^{n-1}\int_{t_k}^{t_{k+1}}\int_{t_k}^{t}f(X_k)\widetilde{X}(s)\bigl(f'g\bigr)(X_k)\bigl(B(s)-B(t_k)\bigr)\dd B(s)\dd B(t)\Bigr|^{2p}\Biggr]\\
&\le C_p(T,x_0)\sum_{k=0}^{n-1}\int_{t_k}^{t_{k+1}}(t-t_k)^{p-1}\int_{t_k}^{t}(s-t_k)^p \dd s \dd t\\
&\le C_p(T,x_0)\tau^{2p}.
\end{align*}

{\bf Treatment of the error term $\varepsilon_{n}^{(2,2,4)}$.}

The mappings $f$ and $f'g$ are bounded (see~\eqref{eq:boundf} and~\eqref{eq:boundf'g}). Applying the discrete-time version~\eqref{eq:BDGdiscrete} of the BDG inequality and the H\"older inequality, one obtains the upper bounds
\begin{align*}
\E\Biggl[\Bigl|&\sum_{k=0}^{n-1}\int_{t_k}^{t_{k+1}}\int_{t_k}^{t}\widetilde{X}(s)\bigl(f^2f'g\bigr)(X_k)\bigl(B(s)-B(t_k)\bigr)\dd s\dd B(t)\Bigr|^{2p}\Biggr]\\
&\le C_p(T)\sum_{k=0}^{n-1}\int_{t_k}^{t_{k+1}}\E\Biggl[\Bigl|\int_{t_k}^{t}\widetilde{X}(s)\bigl(f^2f'g\bigr)(X_k)\bigl(B(s)-B(t_k)\bigr)\dd s\Bigr|^{2p}\Biggr]\dd t\\
&\le C_p(T)\sum_{k=0}^{n-1}\int_{t_k}^{t_{k+1}}(t-t_k)^{2p-1}\int_{t_k}^{t}\E\left[\big|\widetilde{X}(s)\bigl(f^2f'g\bigr)(X_k)\bigl(B(s)-B(t_k)\bigr)\big|^{2p}\right]\dd s \dd t\\
&\le C_p(T)\sum_{k=0}^{n-1}\int_{t_k}^{t_{k+1}}(t-t_k)^{2p-1}\int_{t_k}^{t}\bigl(\E[\big|\widetilde{X}(s)\big|^{4p}]\bigr)^{\frac12}\bigl(\E[\big|B(s)-B(t_k)\big|^{4p}]\bigr)^{\frac12}\dd s\dd t.
\end{align*}
Applying the moment bounds~\eqref{momentNum} from \Cref{propMomNum} on the auxiliary process $\widetilde{X}$, one obtains the upper bounds
\begin{align*}
\E\Biggl[\Bigl|&\sum_{k=0}^{n-1}\int_{t_k}^{t_{k+1}}\int_{t_k}^{t}f(X_k)\widetilde{X}(s)f(X_k)\bigl(f'g\bigr)(X_k)\bigl(B(s)-B(t_k)\bigr)\dd s\dd B(t)\Bigr|^{2p}\Biggr]\\
&\le C_p(T,x_0)\sum_{k=0}^{n-1}\int_{t_k}^{t_{k+1}}(t-t_k)^{2p-1}\int_{t_k}^{t}(s-t_k)^p \dd s \dd t\\
&\le C_p(T,x_0)\tau^{3p}.
\end{align*}

{\bf Treatment of the error term $\varepsilon_{n}^{(2,2,5)}$.}

The mappings $f$ and $f'g$ are bounded (see~\eqref{eq:boundf} and~\eqref{eq:boundf'g}). Applying the discrete-time version~\eqref{eq:BDGdiscrete} of the BDG inequality and the H\"older inequality, one obtains the upper bounds
\begin{align*}
\E[\big|&\sum_{k=0}^{n-1}\int_{t_k}^{t_{k+1}}\int_{t_k}^{t}\widetilde{X}(s)\bigl(f(f'g)^2\bigr)(X_k)\bigl(B(s)-B(t_k)\bigr)^2\dd s \dd B(t)\big|^{2p}]\\
&\le C_p(T)\sum_{k=0}^{n-1}\int_{t_k}^{t_{k+1}}\E[\big|\int_{t_k}^{t}\widetilde{X}(s)\bigl(f(f'g)^2\bigr)\bigl(B(s)-B(t_k)\bigr)^2\dd s\big|^{2p}]\dd t\\
&\le C_p(T)\sum_{k=0}^{n-1}\int_{t_k}^{t_{k+1}}(t-t_k)^{2p-1}\int_{t_k}^{t}\E[\big|\widetilde{X}(s)\bigl(f(f'g)^2\bigr)(X_k)\bigl(B(s)-B(t_k)\bigr)^2\big|^{2p}]\dd s\dd t\\
&\le C_p(T)\sum_{k=0}^{n-1}\int_{t_k}^{t_{k+1}}(t-t_k)^{2p-1}\int_{t_k}^{t}\bigl(\E[\big|\widetilde{X}(s)\big|^{4p}]\bigr)^{\frac12}\bigl(\E[\big|B(s)-B(t_k)\big|^{8p}]\bigr)^{\frac12}\dd s\dd t.
\end{align*}
Applying the moment bounds~\eqref{momentNum} from \Cref{propMomNum} on the auxiliary process $\widetilde{X}$, one obtains the upper bounds
\begin{align*}
\E[\big|&\sum_{k=0}^{n-1}\int_{t_k}^{t_{k+1}}\int_{t_k}^{t}f(X_k)\widetilde{X}(s)\bigl(f'g\bigr)^2(X_k)\bigl(B(s)-B(t_k)\bigr)^2\dd s \dd B(t)\big|^{2p}]\\
&\le C_p(T,x_0)\sum_{k=0}^{n-1}\int_{t_k}^{t_{k+1}}(t-t_k)^{2p-1}\int_{t_k}^{t}(s-t_k)^{2p} \dd s \dd t\\
&\le C_p(T,x_0)\tau^{4p}.
\end{align*}

{\bf Conclusion.}

Gathering the upper bounds obtained above for the auxiliary error terms $\varepsilon_n^{(2,2,j)}$ for $j=1,\ldots,5$, one obtains the following upper bound for the error term $\varepsilon_n^{(2,2)}$: one has
\[
\E\left[|\varepsilon_n^{(2,2)}|^{2p}\right]\le C_p(T,x_0)\tau^{2p}.
\]
This concludes the second part of the proof of Lemma~\ref{lem:aux2}.
\end{proof}

\begin{proof}[Proof of Lemma~\ref{lem:aux2}: Part 3]

This final part of the proof is devoted to the treatment of the error terms $\varepsilon_n^{(2,3)}$, $\varepsilon_n^{(2,4)}$ and $\varepsilon_n^{(2,5)}$ defined by~\eqref{err23},~\eqref{err24} and~\eqref{err25}.

{\bf Treatment of the error term $\varepsilon_{n}^{(2,3)}$.}

The mapping $f'g$ is bounded (see~\eqref{eq:boundf'g}). Applying the discrete-time version~\eqref{eq:BDGdiscrete} of the BDG inequality, the Cauchy--Schwarz inequality, and the increment bounds~\eqref{regExNum} from Proposition~\ref{propregExNum} on the auxiliary process $\widetilde{X}$, one obtains the upper bounds
\begin{align*}
\E\left[|\varepsilon_n^{(2,3)}|^{2p}\right]&\le C_p(T)\sum_{k=0}^{n-1}\int_{t_k}^{t_{k+1}}\E\left[\big|\bigl(f'g\bigr)(X_k)\Bigl(\widetilde{X}(t)-\widetilde{X}(t_k)\Bigr)\bigl[B(t)-B(t_k)\bigr]\big|^{2p}\right]\dd t\\
&\le C_p(T)\sum_{k=0}^{n-1}\int_{t_k}^{t_{k+1}}
\left(\E\left[|\widetilde{X}(t)-\widetilde{X}(t_k)|^{4p}\right]\right)^{\frac12}
\left(\E\left[\big|B(t)-B(t_k)\big|^{4p}\right]\right)^{\frac12}\dd t\\
&\le C_p(T,x_0)\sum_{k=0}^{n-1}\int_{t_k}^{t_{k+1}}(t-t_k)^{2p}\dd t\\
&\le C_p(T,x_0)\tau^{2p}.
\end{align*}

{\bf Treatment of the error term $\varepsilon_{n}^{(2,4)}$.}

Recalling that one has $\widetilde{X}(t_k)=X_k$ for all $k\in\{0,\ldots,N\}$, the error term $\varepsilon_n^{(2,4)}$ is decomposed as
\begin{align*}
\varepsilon_n^{(2,4)}&=\sum_{k=0}^{n-1}\int_{t_k}^{t_{k+1}}\bigl(ff'g\bigr)(X_k)\widetilde{X}(t)\bigl[B(t)-B(t_k)\bigr]\dd t\\
&=\sum_{k=0}^{n-1}\int_{t_k}^{t_{k+1}}\bigl(ff'g\bigr)(X_k)\bigl[\widetilde{X}(t)-\widetilde{X}(t_k)\bigr]\bigl[B(t)-B(t_k)\bigr]\dd t\\
&+\sum_{k=0}^{n-1}\int_{t_k}^{t_{k+1}}\bigl(ff'g\bigr)(X_k)X_k\bigl[B(t)-B(t_k)\bigr]\dd t.
\end{align*}

The mappings $f$ and $f'g$ are bounded (see~\eqref{eq:boundf} and~\eqref{eq:boundf'g}).

On the one hand, applying the Minkowski and Cauchy--Schwarz inequalities, and the increment bounds~\eqref{regExNum} from Proposition~\ref{propregExNum} on the auxiliary process $\widetilde{X}$ one obtains the upper bounds
\begin{align*}
\Biggl(\E\Bigl[\Bigl|&\sum_{k=0}^{n-1}\int_{t_k}^{t_{k+1}}\bigl(ff'g\bigr)(X_k)\bigl[\widetilde{X}(t)-\widetilde{X}(t_k)\bigr]\bigl[B(t)-B(t_k)\bigr]\dd t\Bigr|^{2p}\Bigr]\Biggr)^{\frac{1}{2p}}\\
&\le \sum_{k=0}^{n-1}\int_{t_k}^{t_{k+1}}\Bigl(\E\left[\big| \bigl(ff'g\bigr)(X_k)\bigl[\widetilde{X}(t)-\widetilde{X}(t_k)\bigr]\bigl[B(t)-B(t_k)\bigr]\big|^{2p}\right]\Bigr)^{\frac{1}{2p}}\dd t\\
&\le C\sum_{k=0}^{n-1}\int_{t_k}^{t_{k+1}}\bigl(\E[\big|\widetilde{X}(t)-\widetilde{X}(t_k)\big|^{4p}]\bigr)^{\frac{1}{4p}}\bigl(\E[\big|B(t)-B(t_k)\big|^{4p}]\bigr)^{\frac{1}{4p}}\dd t\\
&\le C_p(T,x_0)\sum_{k=0}^{n-1}\int_{t_k}^{t_{k+1}}(t-t_k)^{2p}\dd t\\
&\le C_p(T,x_0)\tau.
\end{align*}
On the other hand, applying the stochastic Fubini theorem one has
\begin{align*}
\sum_{k=0}^{n-1}\int_{t_k}^{t_{k+1}}\bigl(ff'g\bigr)(X_k)X_k\bigl[B(t)-B(t_k)\bigr]\dd t&=\sum_{k=0}^{n-1}\int_{t_k}^{t_{k+1}}\int_{t_k}^{t}\bigl(ff'g\bigr)(X_k)X_k\dd B(s)\dd t\\
&=\sum_{k=0}^{n-1}\int_{t_k}^{t_{k+1}}(t_{k+1}-s)\bigl(ff'g\bigr)(X_k)X_k\dd B(s).
\end{align*}
Applying the discrete-time version~\eqref{eq:BDGdiscrete} of the BDG inequality and the moment bounds~\eqref{momentNum} from \Cref{propMomNum} on the numerical solution, one obtains the upper bounds
\begin{align*}
\E\Bigl[\big|\sum_{k=0}^{n-1}&\int_{t_k}^{t_{k+1}}(t_{k+1}-s)\bigl(ff'g\bigr)(X_k)X_k\dd B(s)\big|^{2p}\Bigr]\\
&\le C_p(T)\sum_{k=0}^{n-1}\int_{t_k}^{t_{k+1}}(t_{k+1}-s)^{2p}\E\bigl[\big|\bigl(ff'g\bigr)(X_k)X_k|^{2p}\bigr]\dd s\\
&\le C_p(T)\tau^{2p}\underset{0\le n\le N}\sup~\E\left[|X_k|^{2p}\right]\\
&\le C_p(T,x_0)\tau^{2p}.
\end{align*}
Combining the upper bounds above, for the auxiliary error term $\varepsilon_n^{(2,4)}$ one has
\[
\E\left[|\varepsilon_n^{(2,4)}|^{2p}\right]\le C_p(T,x_0)\tau^{2p}.
\]

{\bf Treatment of the error term $\varepsilon_{n}^{(2,5)}$.}

The mapping $f'g$ is bounded. Applying the Minkowski and Cauchy--Schwarz inequalities, and the moment bounds~\eqref{momentNum} from \Cref{propMomNum} on the auxiliary process $\widetilde{X}$, one obtains the upper bounds
\begin{align*}
\left(\E\left[|\varepsilon_n^{(2,5)}|^{2p}\right]\right)^{\frac{1}{2p}}&\le \frac12 \sum_{k=0}^{n-1}\int_{t_k}^{t_{k+1}}
\left(\E\left[\big|\bigl(f'g\bigr)^2(X_k)\widetilde{X}(t)\bigl[B(t)-B(t_k)\bigr]^2\big|^{2p}\right]\right)^{\frac{1}{2p}}\dd t\\
&\le C_p\sum_{k=0}^{n-1}\int_{t_k}^{t_{k+1}}
\left(\E\left[|\widetilde{X}(t)|^{4p}\right]\right)^{\frac{1}{4p}}
\left(\E\left[\big|B(t)-B(t_k)\big|^{8p}\right]\right)^{\frac{1}{4p}}\dd t\\
&\le C_p(T,x_0)\sum_{k=0}^{n-1}\int_{t_k}^{t_{k+1}}(t-t_k)\dd t\\
&\le C_p(T,x_0)\tau.
\end{align*}

This concludes the third part of the proof of Lemma~\ref{lem:aux2}.
\end{proof}

\subsection{Proof of strong error estimates}\label{secOrder}

This section is dedicated to the presentation of the final arguments of the proof of \Cref{theoMain}, using the ingredients obtained in Sections~\ref{secDec} and~\eqref{secAuxErr}. Recall the decomposition~\eqref{eqDecErr} of the error $e_n=X_n-X(t_n)$. Upper bounds for the error terms $\varepsilon_n^{(1,j)}$ ($j=1,2$) and $\varepsilon_n^{(2,j)}$ ($j=1,\ldots,5$) are given by Lemmas~\ref{lem:aux1} and~\ref{lem:aux2}. It remains to deal now with the error terms $\varepsilon_n^{(3,j)}$ ($j=1,2,3$). On the one hand, the error terms $\varepsilon_n^{(3,1)}$ and $\varepsilon_n^{(3,2)}$ defined by~\eqref{err31} and~\eqref{err32} are treated in Lemma~\ref{lem:aux3}. On the other hand, the treatment of the error term $\varepsilon_n^{(3,3)}$ given by~\eqref{err33} is more delicate, since the mapping $g'g$ is not assumed to be Lipschitz continuous on $\R^+$, see the inequality~\eqref{eq:boundg'g}. The analysis of the strong convergence of the numerical scheme is then performed in two steps, in order to obtain the rates of convergence $1/2$ and $1$.

Lemma~\ref{lem:aux3} provides upper bounds for the error terms $\varepsilon_n^{(3,1)}$ and $\varepsilon_n^{(3,2)}$ defined by~\eqref{err31} and~\eqref{err32}. 
\begin{lemma}\label{lem:aux3}
Under the setting of \Cref{theoMain}, one has the following error bounds for the auxiliary error terms $\varepsilon_n^{(3,1)}$ and $\varepsilon_n^{(3,2)}$ defined by~\eqref{err31} and~\eqref{err32}: for all
$n\in\{1,\ldots,N\}$ one has
\begin{equation}\label{eqErr3a}
\E\left[|\varepsilon_n^{(3,j)}|^{2p}\right]\leq C_p(T)\dt\sum_{k=0}^{n-1}\E\left[ |e_k|^{2p}\right].
\end{equation}
\end{lemma}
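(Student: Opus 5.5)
The bound \eqref{eqErr3a} follows from the global Lipschitz continuity of $G$ and $g$ on $\R^+$: by \Cref{assCoeff}, $G'$ and $g'$ are bounded, so $|G(X_k)-G(X(t_k))|\le \|G'\|_\infty|e_k|$ and $|g(X_k)-g(X(t_k))|\le \|g'\|_\infty|e_k|$, where $e_k=X_k-X(t_k)$. Both $X_k$ and $X(t_k)$ are nonnegative, by \Cref{prop:posi} and \Cref{propMomEx}, so they lie in the domain where these bounds hold. No moment bounds on the numerical or exact solutions are needed, and no condition on $\dt$ is required. The two terms are treated separately.

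\textbf{The term $\varepsilon_n^{(3,1)}$.} I would write it as $\varepsilon_n^{(3,1)}=\sum_{k=0}^{n-1}\dt\,\bigl(G(X_k)-G(X(t_k))\bigr)$ and apply the discrete H\"older (Jensen) inequality for the sum of $n$ terms:
\[
\Bigl|\sum_{k=0}^{n-1}\dt\, a_k\Bigr|^{2p}\le (n\dt)^{2p-1}\,\dt\sum_{k=0}^{n-1}|a_k|^{2p}\le T^{2p-1}\dt\sum_{k=0}^{n-1}|a_k|^{2p}.
\]
Taking expectations and using the Lipschitz bound gives $\E[|\varepsilon_n^{(3,1)}|^{2p}]\le T^{2p-1}\|G'\|_\infty^{2p}\,\dt\sum_{k=0}^{n-1}\E[|e_k|^{2p}]$.

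\textbf{The term $\varepsilon_n^{(3,2)}$.} Here $\varepsilon_n^{(3,2)}=\sum_{k}\int_{t_k}^{t_{k+1}}\phi_k(s)\dd B(s)$ with integrand $\phi_k(s)=g(X_k)-g(X(t_k))$, which is $\mathcal{F}_{t_k}$-measurable and constant on $[t_k,t_{k+1}]$, hence predictable. The discrete BDG inequality \eqref{eq:BDGdiscrete} then yields
\[
\E[|\varepsilon_n^{(3,2)}|^{2p}]\le C_p(T)\sum_{k=0}^{n-1}\dt\,\E\bigl[|g(X_k)-g(X(t_k))|^{2p}\bigr]\le C_p(T)\|g'\|_\infty^{2p}\dt\sum_{k=0}^{n-1}\E[|e_k|^{2p}].
\]

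Absorbing $\|G'\|_\infty$ and $\|g'\|_\infty$ into $C_p(T)$ gives \eqref{eqErr3a}. I see no real obstacle here. The only points to check are that both arguments stay in $\R^+$, so that the Lipschitz bounds apply, and that the BDG integrand is predictable. The genuinely delicate term is $\varepsilon_n^{(3,3)}$, where $g'g$ is not Lipschitz; it is not covered by this lemma.
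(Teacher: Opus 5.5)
Your proposal is correct and follows essentially the same route as the paper: the Lipschitz bounds on $G$ and $g$ on $\R^+$, the discrete H\"older inequality for $\varepsilon_n^{(3,1)}$, and the discrete BDG inequality \eqref{eq:BDGdiscrete} for $\varepsilon_n^{(3,2)}$. Your version adds some details the paper leaves implicit: the explicit constant $T^{2p-1}$, the nonnegativity of both arguments, and the predictability of the integrand.
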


\begin{proof}[Proof of Lemma~\ref{lem:aux3}]
For the auxiliary error term~\eqref{err31}, since the mapping $G$ is Lipschitz continuous on $\R^+$ owing to Assumption~\ref{assCoeff}, applying the H\"older inequality one obtains the upper bounds
\begin{align*}
\E\left[|\varepsilon_{n}^{(3,1)}|^{2p}\right]&\le C_p(T)\tau\sum_{k=0}^{n-1}\E\left[\big|G(X_k)-G(X(t_k)\big|^{2p}\right]\\
&\le C_p(T)\tau\sum_{k=0}^{n-1}\E\bigl[|e_k|^{2p}\bigr].
\end{align*}
For the auxiliary error term~\eqref{err32}, since the mapping $g$ is Lipschitz continuous on $\R^+$ owing to Assumption~\ref{assCoeff}, applying the discrete-time version~\eqref{eq:BDGdiscrete} of the BDG inequality one obtains the upper bounds
\begin{align*}
\E\left[|\varepsilon_{n}^{(3,2)}|^{2p}\right]&\le C_p(T)\sum_{k=0}^{n-1}\int_{t_k}^{t_{k+1}}\E\left[\big|g(X_k)-g(X(t_k)\big|^{2p}\right]\dd t\\
&\le C_p(T)\tau\sum_{k=0}^{n-1}\E\bigl[|e_k|^{2p}\bigr].
\end{align*}
The proof of Lemma~\ref{lem:aux3} is thus completed.
\end{proof}

We conclude this article by providing the proof of its main result.
\begin{proof}[Proof of Theorem~\ref{theoMain}]

Recalling the decomposition~\eqref{eqDecErr} of the error, and applying the upper bounds from Lemmas~\ref{lem:aux1},~\ref{lem:aux2} and~\ref{lem:aux3}, one obtains the following result: for all $p\in[1,\infty)$, there exists $C_p(T),C_p(T,x_0)\in(0,\infty)$ such that for all $\tau=T/N$ and all $n\in\{0,\ldots,N-1\}$, if $\dt\le \tau_p$, one has
\[
\E\bigl[|e_n|^{2p}\bigr]\le C_p(T)\tau\sum_{k=0}^{n-1}\E\bigl[|e_k|^{2p}\bigr]+C_p(T,x_0)\dt^{2p}+C_p(T)\E[|\varepsilon_n^{(3,3)}|^{2p}].
\]
Applying the discrete Gr\"onwall lemma then yields the inequality
\begin{equation}\label{eq:postGronwall}
\underset{0\le n\le N}\sup~\E\bigl[|e_n|^{2p}\bigr]\le C_p(T)\left(\dt^{2p}+\underset{0\le n\le N}\sup~\E[|\varepsilon_n^{(3,3)}|^{2p}]\right).
\end{equation}
It remains to deal with the error term $\varepsilon_n^{(3,3)}$, defined by~\eqref{err33}.

The first step is to achieve strong order (at least) $1/2$. Note that the random variables $X_k$ and $X(t_k)$ are $\mathcal{F}_{t_k}$-measurable, and that the Brownian increment $B(t)-B(t_k)$ is independent of $\mathcal{F}_{t_k}$. Applying the discrete-time version~\eqref{eq:BDGdiscrete} of the BDG inequality, one obtains
\begin{align}
\E\left[|\varepsilon_{n}^{(3,3)}|^{2p}\right]&\le C_p(T)\sum_{k=0}^{n-1}\int_{t_k}^{t_{k+1}}\E\left[\big|\bigl(\bigl(g'g)(X_k)-\bigl(g'g\bigr)(X(t_k))\bigr)\bigl[B(t)-B(t_k)\bigr]\big|^{2p}\right]\dd t\nonumber\\
&\le C_p(T)\sum_{k=0}^{n-1}\E\left[\big|\bigl(\bigl(g'g)(X_k)-\bigl(g'g\bigr)(X(t_k))\bigr)\big|^{2p}\right]\int_{t_k}^{t_{k+1}}\E[|B(t)-B(t_k)|^{2p}]\dd t\nonumber\\
&\le C_p(T)\tau^{p+1}\sum_{k=0}^{n-1}\E\left[\big|\bigl(\bigl(g'g)(X_k)-\bigl(g'g\bigr)(X(t_k))\bigr)\big|^{2p}\right].\label{eq:goood}
\end{align}
Owing to Assumption~\ref{assCoeff}, the first order derivative $g'$ of the diffusion coefficient is bounded, and the inequality~\eqref{eq:Gg-growth} shows that the mapping $g$ has at most linear growth. As a result, applying the moment bounds~\eqref{momentExact} and~\eqref{momentNum} on the exact and numerical solutions given in \Cref{propMomEx} and \Cref{propMomNum}, one obtains the upper bounds
\begin{align*}
\E\left[\big|\bigl(\bigl(g'g)(X_k)-\bigl(g'g\bigr)(X(t_k))\bigr)\big|^{2p}\right]&\le 2^{2p-1}\E\left[\big|\bigl(\bigl(g'g)(X_k)\big|^{2p}\right]+2^{2p-1}\E\left[\big|\bigl(g'g\bigr)(X(t_k))\bigr)\big|^{2p}\right]\\
&\le 2^{2p}\|g'\|_\infty^{4p}\bigl(\E\left[|X_k|^{2p}\right]+\E\left[|X(t_k)|^{2p}\right]\bigr)\\
&\le C_p(T,x_0).
\end{align*}
As a result, one obtains the bound
\[
\underset{0\le n\le N}\sup~\E\left[|\varepsilon_{n}^{(3,3)}|^{2p}\right]\le C_p(T,x_0)\tau^{p}.
\]
Therefore, substituting the upper bound above in the inequality~\eqref{eq:postGronwall} shows that the strong order of convergence of the numerical scheme~\eqref{eq:scheme} is at least equal to $1/2$: for all $p\in[1,\infty)$, there exists $C_p(T,x_0)\in(0,\infty)$ such that for all $\tau=T/N$ and all $n\in\{0,\ldots,N-1\}$, if $\dt\le \tau_p$, one has
\begin{equation}\label{eq:result1}
\underset{0\le n\le N}\sup~\E\left[|e_n|^{2p}\right]\le C_p(T,x_0)\tau^p.
\end{equation}

The second step of the proof is now performed, where it is proved that strong order $1$ is achieved. The mapping $g'g$ is not assumed to be Lipschitz continuous on $\R^+$, however it satisfies the inequality~\eqref{eq:boundg'g}. Therefore, considering the upper bound~\eqref{eq:goood} on $\E\left[|\varepsilon_{n}^{(3,3)}|^{2p}\right]$ obtained above, and applying the Cauchy--Schwarz inequality, one has
\begin{align*}
\E\left[|\varepsilon_{n}^{(3,3)}|^{2p}\right]
&\le C_p(T)\tau^{p+1}\sum_{k=0}^{n-1}\E\left[\big|\bigl(\bigl(g'g)(X_k)-\bigl(g'g\bigr)(X(t_k))\bigr)\big|^{2p}\right]\\
&\le C_p(T)\tau^{p+1}\sum_{k=0}^{n-1}
\left(\E\left[|e_k|^{4p}\right]\right)^{\frac12}
\left(1+\left(\E\left[|X_k|^{4p}\right]\right)^{\frac12}+\left(\E\left[|X(t_k)|^{4p}\right]\right)^{\frac12}\right).
\end{align*}
Finally, applying the strong error estimates~\eqref{eq:result1} with order $1/2$ in $L^{4p}(\Omega)$ (as $p\in[1,\infty)$ is arbitrary) and the moment bounds~\eqref{momentExact} and~\eqref{momentNum} on the exact and numerical solutions given in \Cref{propMomEx} and \Cref{propMomNum}, one obtains
\[
\underset{0\le n\le N}\sup~\E\left[|\varepsilon_{n}^{(3,3)}|^{2p}\right]\le C_p(T,x_0)\tau^{2p}.
\]
Therefore, substituting the upper bound above in the inequality~\eqref{eq:postGronwall} shows that the strong order of convergence of the numerical scheme~\eqref{eq:scheme} is at least equal to $1$: for all $p\in[1,\infty)$, there exists $C_p(T,x_0)\in(0,\infty)$ such that for all $\tau=T/N$ and all $n\in\{0,\ldots,N-1\}$, if $\dt\le \tau_p$, one has
\begin{equation}\label{eq:result2}
\underset{0\le n\le N}\sup~\E\left[|e_n|^{2p}\right]\le C_p(T,x_0)\tau^{2p}.
\end{equation}
This concludes the proof of \Cref{theoMain}.
\end{proof}

\section*{Acknowledgements}
This work was initiated thanks to the support of the SFVE-A program. 
The work of DC was partially supported by the Swedish Research Council (VR) (projects nr. $2018-04443$
and $2024-04536$) and partially supported by the European Union (ERC, StochMan, 101088589, PI A. Lang).
Views and opinions expressed are however those of the author(s) only and do not necessarily reﬂect those
of the European Union or the European Research Council. Neither the European Union nor the granting authority can be held responsible for them.
The computations were performed on resources provided by
the National Academic Infrastructure for Supercomputing in Sweden (NAISS) at Vera, Chalmers e-Commons
at Chalmers University of Technology and partially funded by the Swedish Research Council
through grant agreement no. 2022-06725.

%\bibliographystyle{plain}
%\bibliography{labib}

\end{document}